\documentclass[11pt]{article}
\usepackage{amssymb}
\usepackage{amsmath}
\usepackage{amsfonts}
\usepackage{graphicx} 
\usepackage{url, hyperref}
\usepackage[margin=1.1in]{geometry}
\usepackage[usenames]{color}
\usepackage[table,xcdraw]{xcolor}
\usepackage{subcaption}

\newtheorem{theorem}{Theorem}
\newtheorem{proposition}[theorem]{Proposition}

\newtheorem{lemma}[theorem]{Lemma}
\newtheorem{definition}[theorem]{Definition}
\newtheorem{hypothesis}[theorem]{Hypothesis} 
\newtheorem{remark}[theorem]{Remark}
\newenvironment{proof}[1][Proof]{\noindent\textbf{#1.} }{\hfill$\blacksquare$\medskip}

\newcommand{\R}{\mathbb{R}}
\newcommand{\T}{\mathbb{T}}

\newcommand{\A}{\mathcal{A}}
\newcommand{\Hp}{H^{2}_{2\pi}}
\newcommand{\Lp}{L^{2}_{2\pi}}
\newcommand{\inner}[2]{\langle #1,#2\rangle}
\newcommand{\norm}[1]{\left\Vert #1\right\Vert}
\newcommand{\abs}[1]{\left\vert #1\right\vert}
\newcommand{\nor}{\mathbf{n}}

\newcommand{\rev}[1]{{#1}} 

\newcommand{\cA}{\mathcal{A}}
\newcommand{\cF}{\mathcal{F}}
\newcommand{\cN}{\mathcal{N}}

\newcommand{\Z}{\mathbb{Z}}

\graphicspath{ {./figures/}}

\title{Platonic constellations of periodic motions in the \\ $(n+1)$-body problem}
\author{Constantineau,~K.\footnote{Department of Mathematics and Statistics, McGill University, 805 Sherbrooke \rev{Street} West, Montreal, QC H3A 0B9, Canada. kevin.constantineau@mail.mcgill.ca},\, Garc\'{\i}a-Azpeitia,~C.\footnote{Departamento de Matem\'aticas y Mec\'anica,
IIMAS-UNAM.
Apdo. Postal 20-126, Col. San \'Angel,
Mexico City, 01000,  Mexico. cgazpe@aries.iimas.unam.mx. ORCID: 0000-0002-6327-1444.} \, and  J.-P.~Lessard\footnote{Department of Mathematics and Statistics, McGill University, 805 Sherbrooke \rev{Street} West, Montreal, QC H3A 0B9, Canada. jp.lessard@mcgill.ca}}
\date{}

\begin{document}
\maketitle

\begin{abstract}
We study the spatial $(n+1)$-body problem formed by one heavy central mass
together with $n$ equal masses placed on a single orbit of a polyhedral rotation
group $H\in\{\T,\mathbb{O},\mathbb{I}\}$, so that $n=\abs{H}\in\{12,24,60\}$.
Imposing the symmetry $q_{L}=L\,q_{I}$ for $L\in H$ reduces the problem to a
single $2\pi$-periodic reference curve, with reduced action
$\A_{H}=\A_{0}+\varepsilon\A_{1}$, in which $\varepsilon$ is the inverse central
mass and $\A_{0}$ is the Kepler action. At $\varepsilon=0$ the critical set
contains, as one connected component, the five-dimensional manifold of Kepler
ellipses of minimal period $2\pi$, which we prove to be a nondegenerate critical
manifold. A Lyapunov--Schmidt reduction along this
manifold turns the continuation problem into the search for nondegenerate critical
points of an explicit function $\Phi(e,\psi)$ of the eccentricity $e$ and the
spatial orientation $\psi$, \rev{a nondegeneracy we verify by a computer-assisted proof.}
We thereby obtain, for each of the three groups, families of periodic solutions of
the $(n+1)$-body problem with $n+1\in\{13,25,61\}$ bodies, bifurcating from Kepler
ellipses and carrying the full tetrahedral, octahedral, or icosahedral symmetry.
\end{abstract}

\section{Introduction}

We consider $n+1$ point masses in $\R^{3}$ interacting through Newtonian
gravitation. One of them is a central body of mass $\mu=\varepsilon^{-1}$ at the
origin, and the other $n$ have unit mass. When the $n$ unit masses are arranged on
a single orbit of a finite rotation group $H<SO(3)$ and move so that the whole
configuration stays $H$-symmetric for all time, the $(n+1)$-body problem reduces
to a low-dimensional variational problem for a single representative body. In the
planar case, with $H$ cyclic, this is the classical Maxwell ring studied in~\cite{GaIz13}, whose restricted problem carries symmetry-breaking bifurcations followed numerically in~\cite{CDG16}. Here we treat the three genuinely
nonplanar cases, in which $H$ is the tetrahedral, octahedral, or icosahedral
rotation group, so that $n=\abs{H}\in\{12,24,60\}$. The same three
groups organize the $N$-vortex problem on the sphere, where prescribing the
symmetry and reducing by it produces families of small nonlinear oscillations
emanating from the platonic equilibria~\cite{GaGN22}.

\paragraph{The solutions.}
For every sufficiently small $\varepsilon>0$, that is, for a large enough central
mass, we construct motions of the following form. The central body stays at the
origin, and the $n$ unit masses are the $H$-orbit of a single reference curve,
\begin{equation}\label{eq:solform}
q_{0}(t)=0,\qquad
q_{L}(t)=L\,u_{\varepsilon}\bigl(\varepsilon^{-1/2}t\bigr)\quad(L\in H),
\end{equation}
one body per group element, of period $2\pi\varepsilon^{1/2}$. The reference curve
$u_{\varepsilon}$ is a perturbed Kepler ellipse, $2\pi$-periodic in the rescaled
time $\tau=\varepsilon^{-1/2}t$. Figure~\ref{fig:intro-oct2} shows one of them, an
octahedral solution with $24+1=25$ bodies, whose eccentricity tends as
$\varepsilon\to0$ to a limit enclosed in Table~\ref{tab:cap-ecc}\rev{, close to}
$\bar e=0.962477$. The other solutions we obtain are
drawn in Figures~\ref{fig:cap-grid1} and~\ref{fig:cap-grid2} of
Section~\ref{ss:cap-results}. In all of these figures the $n$ unit masses lie at
every instant on a single $H$-orbit, so they are the vertices of one polyhedron.

Imposing the symmetry \eqref{eq:solform} on the action of the
$(n+1)$-body problem, and rescaling time so that the period becomes $2\pi$,
reduces that action to
\begin{equation}\label{eq:intro-AH}
\A_{H}(u;\varepsilon)=\A_{0}(u)+\varepsilon\,\A_{1}(u),
\end{equation}
written explicitly in \eqref{eq:AH}, in which $\varepsilon=\mu^{-1}$ is the inverse
central mass, $\A_{0}$ is the Kepler action of the reference body, and $\A_{1}$ is
the mutual interaction of the $n$ unit masses.
The periodic solution $u_{\varepsilon}$ of \eqref{eq:solform} is a critical point
of $\A_{H}(\,\cdot\,;\varepsilon)$ on $2\pi$-periodic curves, and conversely every
such critical point yields a motion of the form \eqref{eq:solform}.

The existence of solutions of this form, isolated up to time
translation, is a consequence of the symmetry
breaking produced by the perturbation $\varepsilon\A_{1}$. The Kepler action
$\A_{0}$ is invariant under the full orthogonal group $O(3)$, so at $\varepsilon=0$ no orientation is distinguished. If a Kepler ellipse is a solution, then so is every rotation of it, and the critical set of $\A_{0}$ therefore contains, around each ellipse, a whole continuum of rotated solutions. The interaction term $\A_{1}$ is invariant only under the rigid motions that normalize $H$, that is, under the finite normalizer $N_{O(3)}(H)$. Switching on $\varepsilon$ thus breaks $O(3)$ down to the normalizer, the continuum of orientations is reduced to isolated ones, and those come in orbits of the normalizer, which is why the search \rev{in} Section~\ref{sec:cap} may be confined to a fundamental domain of that normalizer.

\begin{figure}[h!]
\centering
\includegraphics[width=0.8\textwidth]{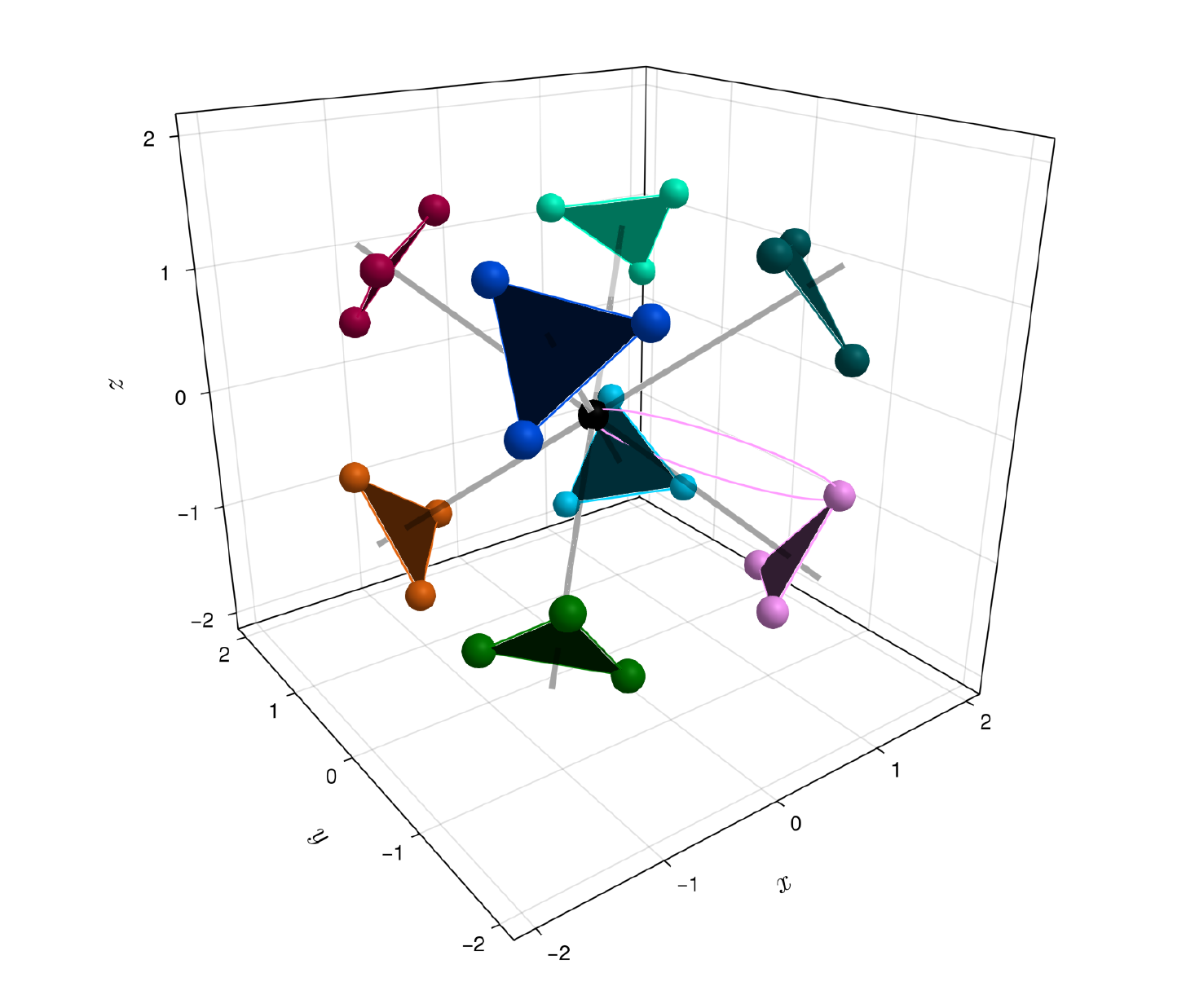}
\caption{An octahedral solution with $n=24$ unit masses and one central mass, in the
limit $\varepsilon\to0$, where it is the Kepler ellipse of eccentricity
$\bar e=0.962477$ and its $H$-orbit. The drawing conventions are
those of Figures~\ref{fig:cap-grid1} and~\ref{fig:cap-grid2}. The central mass is
the black ball at the origin, and the $n$ unit masses are one $H$-orbit, drawn as
the vertices of a polyhedron whose lit faces give the perspective and display the
symmetry. The reference body is the pink ball and its ellipse the pink curve, and
the grey rods are rotation axes of $H$.}
\label{fig:intro-oct2}
\end{figure}

\paragraph{The method.}
Our method is continuation from $\varepsilon=0$, by a Lyapunov--Schmidt reduction
along the critical set of the Kepler action $\A_{0}$, whose critical points are the
elliptic solutions of the Kepler problem. Imposing
the period $2\pi$ restricts that set to the manifold $M$ of Kepler ellipses of that
minimal period, which has dimension five, the parameters being the eccentricity $e$, the
three angles $\psi=(\psi_{1},\psi_{2},\psi_{3})$ that place the ellipse in space,
and the phase along the orbit. We prove that $M$ is a \emph{nondegenerate critical manifold}
(Proposition~\ref{thm:kepler}), meaning that at every $u\in M$ the kernel of the
second variation $\nabla^{2}\A_{0}(u)$ equals the tangent space $T_{u}M$,
the nondegeneracy condition of Bott~\cite{Bott54}.
The phase is a symmetry of the problem rather than a genuine
parameter. \rev{The reduction is carried out along the whole kernel of the Hessian at the base ellipse, the phase direction included,}
and the phase then leaves the problem on its own, because $\A_{H}$ is invariant
under time translation and the reduced function therefore depends on $e$ and
$\psi$ alone. What has to be solved is a system of four equations in the four
unknowns $(e,\psi)$, and these are the parameters we use throughout. They are
explicit coordinates, since a point of $M$ is, at a fixed phase, the curve
\begin{equation}\label{eq:intro-chart}
u_{e,\psi}(\tau)=R(\psi)\,u_{e}(\tau),\qquad
R(\psi)=e^{\psi_{1}J_{1}}\,e^{\psi_{2}J_{2}}\,e^{\psi_{3}J_{3}}\in SO(3),
\end{equation}
where $u_{e}$ is the $2\pi$-periodic Kepler ellipse of eccentricity $e$ in
the reference plane $\{z=0\}$ and $J_{1},J_{2},J_{3}$ are the infinitesimal
generators of the rotations about the coordinate axes, \rev{in the sign convention fixed below}. The
two normal angles $\psi_{1},\psi_{2}$ parametrize the unit
normal of the orbit plane on the sphere, and the in-plane angle $\psi_{3}$ is the angle
of the ellipse within that plane, its argument of pericentre.

The reduction produces an explicit function $\Phi(e,\psi)$, the restriction to $M$ of the
interaction term $\A_{1}$, and each nondegenerate critical point of
$\Phi$ yields a branch of symmetric periodic solutions of the $(n+1)$-body
problem for all small $\varepsilon>0$, with the asymptotic expansion
\begin{equation}\label{eq:uexpand}
u_{\varepsilon}=R(\psi_{\varepsilon})\,u_{e_{\varepsilon}}
+O(\varepsilon),
\qquad
(e_{\varepsilon},\psi_{\varepsilon})=(e_{0},\psi_{0})+O(\varepsilon),
\end{equation}
where $(e_{0},\psi_{0})$ is a nondegenerate critical point of $\Phi$ from which the branch is
continued. To leading order, then, the motion is $n$ copies of one
Kepler ellipse rigidly permuted by $H$.
Each solution comes as an $S^{1}$ orbit of time translations,
$\{u_{\varepsilon}(\,\cdot+\varphi):\varphi\in S^{1}\}$.
This is the content of the two abstract
theorems of the paper, which differ only in the chart used.
The chart \eqref{eq:intro-chart} degenerates at the circle, and
also at the pole $\cos\psi_{2}=0$ of the Euler angles, which none of our critical
points approaches. Theorem~\ref{thm:cont} of
Section~\ref{sec:cont} treats a base ellipse of positive eccentricity, and
Theorem~\ref{thm:circular} of Section~\ref{sec:circular} treats the circular case
$e_{0}=0$. Producing periodic
motions of the $(n+1)$-body problem, for small $\varepsilon$, is thereby made
equivalent to producing
nondegenerate critical points of one explicit function of four variables.

\paragraph{Finding the critical points and proving that they exist.}
It remains to find those critical points. Each of them is first located
numerically, and its existence and nondegeneracy are then proven in
Section~\ref{sec:cap} by a computer-assisted proof. The gradient and Hessian of
$\Phi$, written in closed form in Sections~\ref{sec:reduced}
and~\ref{sec:circular}, are evaluated with interval arithmetic. A
Newton--Kantorovich theorem in its radii-polynomial
form then turns a numerical approximation of a critical point into a proof that a true
one lies within an explicit radius of it, the nondegeneracy being proven on the
same enclosure. The same scheme \rev{was used to prove} the existence of spatial relative
equilibria, and of the normal frequencies from which their periodic solutions
bifurcate, in the Coulomb $(n+1)$-body problem of~\cite{CoGaLe2021}, in which a
fixed nucleus of charge $\mu$ replaces the central mass.
Carried out at each of the points listed in
Tables~\ref{tab:cap-e0} and~\ref{tab:cap-ecc}, this yields eleven critical points,
two for $\T$, four for $\mathbb{O}$, and five for $\mathbb{I}$, and hence, through
the two abstract theorems, eleven families of periodic solutions.

Those two tables, in Section~\ref{ss:cap-results}, are the
concrete output of the paper. Angles are in radians reduced to $[0,2\pi)$, and a
bar denotes a numerical approximation, so that $\bar e$ and $\bar\psi_{j}$
approximate the coordinates $e_{0}$ and $\psi_{0,j}$ of the critical point the row
encloses. Table~\ref{tab:cap-e0} lists the critical orientations of the circular
orbit, one for $\T$, two for $\mathbb{O}$ and four for $\mathbb{I}$. The in-plane
angle $\psi_{3}$ is absent there, since at the circle it only shifts the true
anomaly. The eccentricity gradient vanishes identically by
\eqref{eq:gradeps}, so only the two orientation equations remain.
Table~\ref{tab:cap-ecc} lists the critical points of positive eccentricity, one
for $\T$, two for $\mathbb{O}$ and one for $\mathbb{I}$. Neither table is claimed
to be exhaustive, as Remark~\ref{rem:notexhaustive} records.

\paragraph{Relation to the global methods.}
The continuation is purely \emph{local}, in the sense that each nondegenerate
critical point of $\Phi$ produces a single branch emanating from one Kepler
ellipse, and only for $\varepsilon$ small. It is natural to ask
for a \emph{global} principle instead, at a fixed $\varepsilon>0$
and with the full reduced action, of which
$\varepsilon\Phi$ is the leading term, on the whole family of Kepler ellipses
$e\in[0,1)$. Minimizing that action, or producing critical points of it by a
topological argument, would give solutions with no nondegeneracy hypothesis at
all. Both routes have been taken for problems close to this one.
Neither is available here, and the reason lies at the two ends of the family. On one
side the interaction along the ellipse family is defined only on the orientations
whose orbit plane avoids the
rotation axes of $H$, and it blows up as the collisions between
the unit masses are approached. On the
other, at the boundary $e\to1$, where the ellipse degenerates onto
a collision with the central mass, it does \emph{not} blow up, having a
finite limit there, so it offers no coercivity barrier and nothing
prevents the critical value produced by a global argument from being
attained in that limit. That limit is a genuine point of the
closure of the family, since at a fixed period the ellipses lie on one level set
of negative energy of the Kepler problem, which the regularization of
Moser~\cite{Moser70} compactifies by adjoining exactly those degenerate orbits.

The closest reference is the variational one, Fenucci and
Gronchi~\cite{FeGr23}. They treat the same configuration, one body of large mass
together with $n$ unit masses forming a single orbit of a finite rotation group,
the Platonic ones included, and they minimize the action under symmetry and
topological constraints. In that approach the collisions are excluded rather than
regularized. The estimate of Marchal~\cite{Mar02} shows that a minimizer of the
action between fixed endpoints has no double collision at an interior time, and
Ferrario and Terracini~\cite{FeTe04} give a condition on the symmetry group under
which the action restricted to the equivariant loops is coercive and its
minimizers are free of collisions, with no strong force assumption. The minimizers
of~\cite{FeGr23} are likewise proved to be free of collisions, along a diverging
sequence of values of the central mass. Their asymptotic statement is that the
action $\Gamma$-converges, as the central mass grows, to the action of a
Kepler problem. That convergence does not determine which ellipse is selected, the
Kepler action being constant on $M$ by Step 3 of the proof of
Theorem~\ref{thm:cont}. In
the polyhedral case their symmetry constraint is a relation
$u_{I}(t+T/m)=R\,u_{I}(t)$, with $R$ in the rotation group and $m$ an integer. For
$m\ge2$ it forces the radius $\abs{u_{I}}$ to have period $T/m$, and among
the Kepler ellipses of minimal period $T$ only the circles do so. The four
families of Table~\ref{tab:cap-ecc} are, to leading order in $\varepsilon$,
ellipses of positive eccentricity, so they lie outside the classes over which that
minimization is taken. We obtain them from the critical points of $\Phi$, at the order at which
the ellipses are first distinguished from one another.

The second closest reference is the topological one, Boscaggin,
Ortega and Zhao~\cite{BOZ19}. For a Kepler problem in dimension two or three with
a small time-periodic perturbation they produce periodic solutions of the period
of the perturbation by a Lusternik--Schnirelmann argument. Such an argument bounds
the number of solutions below by the category of the space on which it is run, and
the compactness it needs is supplied by the regularization. What it does not
supply is a way to keep the solutions off the collision set that the
regularization adds, and they are accordingly obtained in a generalized sense that
admits collisions. In the plane they are identified with the solutions given by
the Levi-Civita regularization, whereas in space the corresponding statement for
the Kustaanheimo--Stiefel variables is left open there. We
obtain instead solutions of the unregularized problem, with $e<1$ and free of collisions,
by the local implicit function theorem and computer-assisted proofs.

The construction continues from $\varepsilon=0$, that is from an infinite central
mass, so the solutions we obtain exist for $\mu$ large. At the opposite end,
$\mu=0$, the central body is absent and one is in the classical $n$-body problem,
where periodic solutions with these same polyhedral symmetries were obtained by
Fusco, Gronchi, and Negrini~\cite{FGN11}, and for the potentials
$1/r^{\alpha}$ with $\alpha>1$ by Fusco and Gronchi~\cite{FuGr14}. The stability of
these motions is studied by Fenucci and Gronchi~\cite{FeGr18}, where the
instability of some of them is proved by interval arithmetic. Those solutions and
ours sit at opposite ends of the range of $\mu$ and are reached by different
means. Whether a global branch in $\mu$ joins
the two families is beyond a perturbative construction of this
kind, and it is the question we plan to take up in a sequel to this paper.

\paragraph{Plan of the paper.}
Section~\ref{sec:reduction} reduces the $(n+1)$-body problem, by the symmetry
\eqref{eq:symm} and a rescaling of time, to the action
$\A_{H}=\A_{0}+\varepsilon\A_{1}$ of a single $2\pi$-periodic curve.
Sections~\ref{sec:eccentric} and~\ref{sec:circular} continue the Kepler ellipses of
that action, the first for positive eccentricity, the second at the circle, in the
eccentricity-vector chart that the circle requires.
Section~\ref{sec:cap} then verifies, by
computer-assisted proof, the nondegeneracy both continuations require, and reports
the enclosures in Tables~\ref{tab:cap-e0} and~\ref{tab:cap-ecc}.

\section{The symmetric reduction}\label{sec:reduction}

We first impose the symmetry, which reduces the action of the $n+1$ bodies to a
single integral over one curve. A rescaling of time then puts that action in the
perturbative form $\A_{0}+\varepsilon\A_{1}$, from which the continuation starts.
The three groups are fixed last, by explicit generators, which determines the sum
over $H$ in the interaction term.

\subsection{Reduction of the action by the symmetry}\label{ss:red-symm}

The action of the $n+1$ bodies is
\[
\A(q)=\int \mu\frac{\abs{\dot q_{0}}^{2}}{2}
+\sum_{j=1}^{n}\frac{\abs{\dot q_{j}}^{2}}{2}-V(q)\,dt,\qquad
q=(q_{0},\dots,q_{n}),
\]
with gravitational potential
\[
V(q)=-\mu\sum_{j=1}^{n}\frac{1}{\abs{q_{j}-q_{0}}}
-\sum_{1\le j<k\le n}\frac{1}{\abs{q_{j}-q_{k}}} .
\]
It is invariant under the left action of $O(3)\times S_{n}$, in which $O(3)$
rotates and reflects all bodies simultaneously and $S_{n}$ permutes the $n$ unit
masses,
\[
(R,\sigma)\cdot q=(Rq_{0},Rq_{\sigma^{-1}(1)},\dots,Rq_{\sigma^{-1}(n)}).
\]

Let $H<SO(3)$ be a finite group of order $n$, and number its elements
$L_{1}=I,\dots,L_{n}$, in which $I$ is the identity matrix, and hence the identity
of $H$. We identify the $j$-th body with the corresponding group
element by writing $q_{j}=q_{L_{j}}$, and we let
$\widetilde H=\{(L,L):L\in H\}<O(3)\times S_{n}$ be the associated diagonal
subgroup. The motions considered throughout are those whose configuration lies, at
every instant, in the fixed-point space of $\widetilde H$, that is, those
satisfying
\begin{equation}\label{eq:symm}
q_{0}=0,\qquad q_{L}=L\,q_{I}\quad(L\in H),
\end{equation}
where $q_{I}$ denotes the position of the reference body $L_{1}=I$. Thus the
entire configuration is the single $H$-orbit of $q_{I}$, and the central body sits
at the origin.

\begin{lemma}\label{lem:central}
Suppose $H$ acts on $\R^{3}$ with no nonzero fixed vector, that is,
$\mathrm{Fix}(H):=\{x\in\R^{3}:Lx=x\ \forall L\in H\}=\{0\}$, equivalently the
trivial representation does not occur in $\R^{3}$. Then $\sum_{L\in H}L=0$, the
configurations \eqref{eq:symm} have their centre of mass at the origin, and
$q_{0}=0$ is consistent with the equations of motion, so the central body remains
fixed. The hypothesis holds for $\T,\mathbb{O},\mathbb{I}$, whose standard
three-dimensional representation is irreducible and nontrivial.
\end{lemma}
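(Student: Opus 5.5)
The plan is to prove the three claims in order: the vanishing of $\sum_{L\in H}L$, the centre-of-mass statement, and the consistency of $q_{0}=0$. We finish by checking the hypothesis for the three polyhedral groups.

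\emph{The sum vanishes.} Set $P=\frac{1}{n}\sum_{L\in H}L$. Reindexing the sum over the group gives $L'P=P=PL'$ for every $L'\in H$, so $P^{2}=P$. Thus $P$ is a projection, and its range is contained in $\mathrm{Fix}(H)$. Conversely, if $x\in\mathrm{Fix}(H)$ then $Px=x$, so the range of $P$ equals $\mathrm{Fix}(H)$. Under the hypothesis $\mathrm{Fix}(H)=\{0\}$ this gives $P=0$, hence $\sum_{L}L=0$. The centre-of-mass statement follows at once: under \eqref{eq:symm} the centre of mass is proportional to $\mu q_{0}+\sum_{L}Lq_{I}=0+\bigl(\sum_{L}L\bigr)q_{I}=0$.

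\emph{The equation for $q_{0}$.} The Newton equation for the central body is
\[
\mu\ddot q_{0}=\mu\sum_{L\in H}\frac{q_{L}-q_{0}}{\abs{q_{L}-q_{0}}^{3}}.
\]
At $q_{0}=0$ with $q_{L}=Lq_{I}$, orthogonality of $L$ gives $\abs{q_{L}}=\abs{q_{I}}$. The right-hand side therefore becomes $\mu\abs{q_{I}}^{-3}\bigl(\sum_{L}L\bigr)q_{I}=0$, so $\ddot q_{0}=0$ is compatible with $q_{0}\equiv0$.

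\emph{Consistency of the full reduction.} To make ``consistent with the equations of motion'' rigorous, I would also note the principle of symmetric criticality. The fixed-point space of $\widetilde H$, which contains the condition $q_{0}=0$ because $q_{0}$ is fixed by the diagonal action only if $Lq_{0}=q_{0}$ for all $L$, i.e.\ $q_{0}\in\mathrm{Fix}(H)=\{0\}$, is invariant under the Newtonian flow, since the force field is $\widetilde H$-equivariant. Hence $q_{0}=0$ is in fact forced within the symmetric class. This observation is the only slightly delicate point: it needs the equivariance of $\nabla V$ under $(L,L)$, which follows from the $O(3)\times S_{n}$-invariance of $V$ stated above.

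\emph{The three groups.} For $\T,\mathbb{O},\mathbb{I}$, the standard representation on $\R^{3}$ is irreducible and nontrivial, for instance because these groups contain rotations about non-parallel axes. A fixed vector would then span a one-dimensional invariant subspace, contradicting irreducibility. Alternatively, a vector fixed by two rotations about distinct axes must lie on both axes, hence vanishes. I expect no real obstacle; the main care is in the symmetric-criticality remark.
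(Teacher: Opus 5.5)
Your proposal is correct and follows the paper's proof essentially verbatim. The averaging operator $\frac1n\sum_{L}L$ is a projection with range $\mathrm{Fix}(H)$, hence zero, and the force on the central body reduces to $\abs{q_{I}}^{-3}\bigl(\sum_{L}L\bigr)q_{I}=0$; you rightly skip the self-adjointness the paper also proves, which the lemma does not need, and your flow-invariance remark is a harmless addition.

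One small caution: containing rotations about non-parallel axes does not by itself give irreducibility. The Klein four-group of coordinate half-turns is a counterexample. Your alternative argument, that a common fixed vector lies on two distinct axes and so vanishes, gives $\mathrm{Fix}(H)=\{0\}$ directly, and that is all that is needed.
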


\begin{proof}
Consider the averaging operator
\[
\Pi:=\frac{1}{\abs H}\sum_{L\in H}L,
\]
the arithmetic mean of the group elements acting on $\R^{3}$. We claim $\Pi$ is the
orthogonal projection of $\R^{3}$ onto $\mathrm{Fix}(H)$. Indeed, for every
$M\in H$ one has $M\Pi=\frac1{\abs H}\sum_{L}ML=\Pi$, because $L\mapsto ML$
permutes $H$. Hence every vector in the range of $\Pi$ is fixed by all of $H$, so
$\operatorname{ran}\Pi\subseteq\mathrm{Fix}(H)$. Conversely $\Pi x=x$ whenever
$x\in\mathrm{Fix}(H)$. Together these give $\Pi^{2}=\Pi$, and self-adjointness
follows from $L^{\top}=L^{-1}\in H$. Thus $\Pi$ is the orthogonal projection onto
$\mathrm{Fix}(H)$. By hypothesis $\mathrm{Fix}(H)=\{0\}$, so $\Pi=0$ and therefore
$\sum_{L\in H}L=0$. Consequently
$\sum_{L\in H}q_{L}=\bigl(\sum_{L}L\bigr)q_{I}=0$, so the $n$ unit masses have
vanishing barycentre. Finally, the gravitational force exerted on the central body
is proportional to $\sum_{L}q_{L}/\abs{q_{L}}^{3}$. Since
$\abs{q_{L}}=\abs{q_{I}}$ is independent of $L$ and $\sum_{L}q_{L}=0$, this force
vanishes, so $q_{0}=0$ solves its equation of motion.
\end{proof}

\begin{remark}
The condition $\mathrm{Fix}(H)=\{0\}$ is sharp, not merely sufficient. Since $\Pi$
is the projection onto $\mathrm{Fix}(H)$, the identity $\sum_{L}L=0$ holds
\emph{if and only if} $H$ fixes no nonzero vector. It fails for any group \rev{whose elements share a} common rotation axis. For instance, in the planar problem the cyclic group $C_{n}$
acts on the orbit plane $\R^{2}$, where $\mathrm{Fix}=\{0\}$ and
$\sum_{L\in C_{n}}L=0$ for $n\ge2$. Embedded in $\R^{3}$ as the
rotations about the $z$-axis, that same $C_{n}$ fixes that axis, and
$\sum_{L\in C_{n}}L$ is $n$ times the orthogonal projection onto it.
\end{remark}

Substituting \eqref{eq:symm} and using $\abs{Lq_{I}}=\abs{q_{I}}$ and
$\abs{\dot q_{L}}=\abs{\dot q_{I}}$ for $L\in H<SO(3)$, the action reduces to a
constant multiple of a single integral over the reference body $u:=q_{I}$,
\[
\A(q)=n\int \frac{\abs{\dot u}^{2}}{2}
+\frac{\mu}{\abs{u}}
+\frac12\sum_{L\in H\setminus\{I\}}\frac{1}{\abs{u-Lu}}\,dt .
\]
The overall factor $n$ and the (vanishing) kinetic term of the central body do not
affect the Euler--Lagrange equations and are dropped from here on. The group enters
this integral only through the finite sum over $H\setminus\{I\}$, which is made
explicit in Section~\ref{sec:groups}.

\subsection{The rescaled reduced action}\label{ss:red-scaled}

We now rescale time, so as to make the limit of a large central
mass a regular one. After the rescaling the mass enters only through
$\varepsilon=\mu^{-1}$, and the reduced action extends smoothly to
$\varepsilon=0$, where it is the Kepler action. Set
\begin{equation}\label{eq:scale}
u(\tau)=q_{I}(t),\qquad \tau=\mu^{1/2}t,\qquad \varepsilon=\mu^{-1},
\end{equation}
so that a $2\pi$-periodic curve $u(\tau)$ corresponds to a
$2\pi\varepsilon^{1/2}$-periodic motion of the original problem. Substituting
\eqref{eq:scale} and discarding the resulting constant factor $\mu^{1/2}$, the
reduced action on $2\pi$-periodic curves $u$ becomes
\begin{equation}\label{eq:AH}
\A_{H}(u;\varepsilon)=\int_{0}^{2\pi}
\frac{\abs{\dot u}^{2}}{2}+\frac{1}{\abs{u}}
+\frac{\varepsilon}{2}\!\!\sum_{L\in H\setminus\{I\}}\!\!
\frac{1}{\abs{u-Lu}}\,d\tau
=\A_{0}(u)+\varepsilon\,\A_{1}(u),
\end{equation}
where $\A_{0}$ is the Kepler action and $\A_{1}$ the interaction term,
\[
\A_{0}(u)=\int_{0}^{2\pi}\frac{\abs{\dot u}^{2}}{2}+\frac{1}{\abs{u}}\,d\tau,
\qquad
\A_{1}(u)=\frac12\int_{0}^{2\pi}\sum_{L\ne I}\frac{1}{\abs{u-Lu}}\,d\tau .
\]
Here $\varepsilon=\mu^{-1}$ is the small parameter, the inverse central mass, and
the symmetric periodic solutions are exactly the critical points of
$\A_{H}(\,\cdot\,;\varepsilon)$ on the space of $2\pi$-periodic curves.

\paragraph{Euler--Lagrange equations.}
The gradient of the Kepler action, taken in the $\Lp$ inner product, is
\[
\nabla\A_{0}(u)=-\ddot u-\frac{u}{\abs{u}^{3}} .
\]
For the interaction term, recall that $L^{-1}$ belongs to $H$
whenever $L$ does, and that
$L^{-1}=L^{\top}$. Differentiating
$\abs{u-Lu}^{-1}=\bigl((I-L)u\cdot(I-L)u\bigr)^{-1/2}$ and using
$(I-L)^{\top}(I-L)=2I-L-L^{\top}$, the term indexed by $L$ contributes
$-(2I-L-L^{\top})u/\abs{(I-L)u}^{3}$. Pairing $L$ with $L^{-1}=L^{\top}$, which
satisfies $\abs{(I-L^{\top})u}=\abs{(I-L)u}$ since
$\abs{u-L^{\top}u}=\abs{Lu-u}$, and summing over the group gives
\begin{equation}\label{eq:EL1}
\nabla\A_{1}(u)=-\frac12\sum_{L\ne I}
\frac{(2I-L-L^{\top})u}{\abs{(I-L)u}^{3}}
=-\sum_{L\ne I}\frac{u-Lu}{\abs{u-Lu}^{3}} .
\end{equation}
The Euler--Lagrange equation of \eqref{eq:AH} is therefore
$\nabla\A_{0}(u)+\varepsilon\,\nabla\A_{1}(u)=0$.

\paragraph{Symmetries.}
The Kepler action $\A_{0}$ is invariant under time translation
$u(\tau)\mapsto u(\tau+\varphi)$ and under the full orthogonal group, since
$\A_{0}(Ru)=\A_{0}(u)$ for all $R\in O(3)$. The interaction term $\A_{1}$ is
invariant under time translation and under the normalizer $N_{O(3)}(H)$, because
$\abs{Ru-LRu}=\abs{u-(R^{-1}LR)u}$ and $R^{-1}LR$ runs over $H$ precisely when
$R\in N_{O(3)}(H)$.

Throughout we use the \rev{basis} of $\mathfrak{so}(3)$,
\[
J_{1}=\begin{pmatrix}0&0&0\\0&0&-1\\0&1&0\end{pmatrix},\quad
J_{2}=\begin{pmatrix}0&0&-1\\0&0&0\\1&0&0\end{pmatrix},\quad
J_{3}=\begin{pmatrix}0&-1&0\\1&0&0\\0&0&0\end{pmatrix},
\]
in which $J_{3}$ generates the rotation about the $z$-axis, the
axis normal to the reference orbit plane $\{z=0\}$ used below. 
\subsection{The three polyhedral groups}\label{sec:groups}

It remains to fix the three groups, the only data still implicit in \eqref{eq:AH}.
In each case $H<SO(3)$ is the rotation group of a regular polyhedron, acting by its
standard three-dimensional representation. That representation is irreducible and
nontrivial, so $\mathrm{Fix}(H)=\{0\}$ and Lemma~\ref{lem:central} applies, giving
$\sum_{L\in H}L=0$. In $SO(3)$ the normalizer of $\T$ is
$\mathbb{O}$, while $\mathbb{O}$ and $\mathbb{I}$ are their own normalizers. For
each group we record a pair of generators $A,B$ with their
defining relations. The group is the closure of $\{A,B\}$ under multiplication, and
this closure is how the $\abs H$ elements of $H$ are enumerated whenever the sum
over $H$ has to be evaluated.

\paragraph{Tetrahedral group, $n=12$.}
The generators
\[
A=\begin{pmatrix}1&0&0\\0&-1&0\\0&0&-1\end{pmatrix},\qquad
B=\begin{pmatrix}0&1&0\\0&0&1\\1&0&0\end{pmatrix}
\]
satisfy $A^{2}=B^{3}=(AB)^{3}=I$ and generate
\[
\T=\langle A,B\mid A^{2}=B^{3}=(AB)^{3}=I\rangle\cong A_{4},
\qquad \abs{\T}=12 .
\]
Here $A$ is the half-turn about the $x$-axis and $B$ the order-three rotation about
the body diagonal $(1,1,1)$.
Geometrically, $\T$ is the rotation group of the regular tetrahedron with vertices
$(1,1,1)$, $(1,-1,-1)$, $(-1,1,-1)$, and $(-1,-1,1)$. The
twelve elements split by rotation type as $1$ identity, $3$ half-turns about the
three coordinate ($2$-fold) axes, and $8$ third-turns about the four
body-diagonal ($3$-fold) axes $(\pm1,\pm1,\pm1)$.

\paragraph{Octahedral group, $n=24$.}
The generators
\[
A=\begin{pmatrix}0&-1&0\\1&0&0\\0&0&1\end{pmatrix},\qquad
B=\begin{pmatrix}-1&0&0\\0&0&1\\0&1&0\end{pmatrix}
\]
satisfy $A^{4}=B^{2}=(AB)^{3}=I$ and generate
\[
\mathbb{O}=\langle A,B\mid A^{4}=B^{2}=(AB)^{3}=I\rangle\cong S_{4},
\qquad \abs{\mathbb{O}}=24 .
\]
Concretely, $\mathbb{O}$ is the group of the $24$ signed permutation matrices of
determinant $+1$. Here $A$ is the quarter-turn about the $z$-axis and $B$ the
half-turn about the axis $(0,1,1)$. Geometrically, $\mathbb{O}$ is the rotation
group of the cube, equivalently of the regular octahedron with vertices
$\pm e_{1},\pm e_{2},\pm e_{3}$, the coordinate axes joining opposite vertices of
that octahedron and the body diagonals joining opposite vertices of the cube. The
elements split by rotation type as $1$
identity, $6$ quarter-turns and $3$ half-turns about the three coordinate
($4$-fold) axes, $8$ third-turns about the four body-diagonal ($3$-fold) axes, and
$6$ half-turns about the six edge ($2$-fold) axes.

\paragraph{Icosahedral group, $n=60$.}
Let $\phi=\tfrac12(1+\sqrt5)$ be the golden ratio, so that $\phi-1=\phi^{-1}$. We
\rev{take} for $B$ an order-three rotation about the body diagonal
$(1,1,1)$ and take for $A$ the half-turn about the axis $(\phi,1,\phi-1)$, one of
the two-fold axes of $\mathbb{I}$,
\[
A=\frac12\begin{pmatrix}\phi-1&\phi&1\\[2pt]\phi&-1&\phi-1\\[2pt]
1&\phi-1&-\phi\end{pmatrix},\qquad
B=\begin{pmatrix}0&0&1\\1&0&0\\0&1&0\end{pmatrix} .
\]
They satisfy $A^{2}=B^{3}=(AB)^{5}=I$ and generate
\[
\mathbb{I}=\langle A,B\mid A^{2}=B^{3}=(AB)^{5}=I\rangle\cong A_{5},
\qquad \abs{\mathbb{I}}=60 .
\]
Geometrically, $\mathbb{I}$ is the rotation group of the regular icosahedron,
equivalently of the dodecahedron. The sixty
elements split by rotation type as $1$ identity, $24$ fifth-turns about the six
$5$-fold (vertex) axes, $20$ third-turns about the ten $3$-fold (face) axes, and
$15$ half-turns about the fifteen $2$-fold (edge) axes.

\section{The eccentric case, $0<e_{0}<1$}\label{sec:eccentric}

Here the base eccentricity is strictly positive, $0<e_{0}<1$, the circular case
being deferred to Section~\ref{sec:circular}. We first show that the manifold $M$ of
$2\pi$-periodic Kepler ellipses is a nondegenerate critical manifold of the Kepler
action, which is what a Lyapunov--Schmidt reduction along $M$ needs. That reduction
is Theorem~\ref{thm:cont}, and it replaces the continuation problem by the search
for nondegenerate critical points of the reduced function $\Phi=\A_{1}|_{M}$, which
the last subsection writes in closed form together with its first two derivatives.

\subsection{The Kepler manifold and its nondegeneracy}\label{sec:kepler}

At $\varepsilon=0$ the critical points of $\A_{0}$ on $2\pi$-periodic curves are
exactly the $2\pi$-periodic Kepler ellipses, that is, for each integer
$k\ge1$, the ellipses of minimal period $2\pi/k$ traversed $k$ times. Call that
family $M_{k}$. The $M_{k}$ are the connected components of the
critical set, being separated by the semi-major axis, as recorded after
\eqref{eq:cnorm} below. We take $M=M_{1}$ and leave the
subharmonic components $k\ge2$ aside. We first fix a planar reference
ellipse of eccentricity $e\in[0,1)$ carrying the correct mean motion. In polar
coordinates $(r,\theta)$ in the plane $\{z=0\}$ it is
\begin{equation}\label{eq:ellipse}
u_{e}=r(\theta)\,(\cos\theta,\sin\theta,0),\qquad
r=\frac{c_{e}^{2}}{1+e\cos\theta},\qquad
\dot\theta=\frac{c_{e}}{r^{2}}=\frac{(1+e\cos\theta)^{2}}{c_{e}^{3}},
\end{equation}
where $r$ is the distance to the central mass, $\theta$ is the true anomaly
measured from pericentre, and $c_{e}>0$ is the angular momentum (the constant
$r^{2}\dot\theta$ of Kepler's second law).

The angular momentum $c_{e}$ is fixed
by the eccentricity through the requirement that the period be exactly $2\pi$
(cf.~\cite[(7.17)]{MeHa91}),
\[
2\pi=\int_{0}^{2\pi}d\tau=c_{e}^{3}\int_{0}^{2\pi}(1+e\cos\theta)^{-2}\,d\theta .
\]
We evaluate this normalizing integral once and for all. Write
$\gamma_{m}=\int_{0}^{2\pi}(1+e\cos\theta)^{-m}\,d\theta$ for the relevant moments.
The standard value $\gamma_{1}=2\pi(1-e^{2})^{-1/2}$, obtained from the
substitution $t=\tan\tfrac\theta2$, together with the elementary identity
$\cos\theta\,(1+e\cos\theta)^{-2}
=e^{-1}\bigl[(1+e\cos\theta)^{-1}-(1+e\cos\theta)^{-2}\bigr]$, gives, on
differentiating $\gamma_{1}$ in $e$,
\[
\gamma_{1}'=-\!\int_{0}^{2\pi}\!\frac{\cos\theta\,d\theta}{(1+e\cos\theta)^{2}}
=-\frac{\gamma_{1}-\gamma_{2}}{e},
\qquad\text{so that}\qquad
\gamma_{2}=\gamma_{1}+e\,\gamma_{1}' .
\]
Substituting $\gamma_{1}=2\pi(1-e^{2})^{-1/2}$ and its derivative
$\gamma_{1}'=2\pi e(1-e^{2})^{-3/2}$,
\[
\gamma_{2}=\frac{2\pi}{(1-e^{2})^{1/2}}+\frac{2\pi e^{2}}{(1-e^{2})^{3/2}}
=\frac{2\pi\bigl[(1-e^{2})+e^{2}\bigr]}{(1-e^{2})^{3/2}}
=\frac{2\pi}{(1-e^{2})^{3/2}} .
\]
Hence $c_{e}^{3}=2\pi/\gamma_{2}=(1-e^{2})^{3/2}$, that is,
\begin{equation}\label{eq:cnorm}
c_{e}=\sqrt{1-e^{2}},\qquad c_{e}'=-\frac{e}{c_{e}},\qquad
c_{e}''=-\frac{1}{c_{e}^{3}} ,
\end{equation}
where the prime denotes $d/de$. Requiring the period to be $2\pi$ fixes the
semi-major axis at $a=1$ by Kepler's third law, and $c_{e}$ is then the semi-minor
axis $b=a\sqrt{1-e^{2}}$. The same law, applied to the minimal
period $T=2\pi/k$ of the component $M_{k}$, reads $T=2\pi a^{3/2}$ and gives
$a=(T/2\pi)^{2/3}=k^{-2/3}$ there. Distinct values of $k$ thus give distinct
semi-major axes, which is why the $M_{k}$ are the connected components of the
critical set.

The general $2\pi$-periodic Kepler ellipse is obtained from the planar reference
ellipse $u_{e}$ by a rigid rotation and a phase shift. We write the rotation as a
product of three one-parameter rotations,
\begin{equation}\label{eq:M}
u_{e,\psi}(\tau+\varphi),\qquad
u_{e,\psi}=R(\psi)\,u_{e}(\tau),\qquad
R(\psi)=e^{\psi_{1}J_{1}}\,e^{\psi_{2}J_{2}}\,e^{\psi_{3}J_{3}}\in SO(3),
\end{equation}
parametrized by the eccentricity $e$, the three orientation (Euler) angles
$\psi=(\psi_{1},\psi_{2},\psi_{3})$, and the time phase $\varphi$. The ordering of
the three factors is chosen so that the innermost rotation $e^{\psi_{3}J_{3}}$ is
the rotation about the $z$-axis, which fixes $(0,0,1)$ and therefore acts
\emph{within} the reference plane $\{z=0\}$. The unit normal to the orbit plane is
then
\[
\nor=R(\psi_{1},\psi_{2})\,(0,0,1),\qquad
R(\psi_{1},\psi_{2}):=e^{\psi_{1}J_{1}}e^{\psi_{2}J_{2}},
\]
carried by the outer pair alone, and we call $(\psi_{1},\psi_{2})$ the normal
angles. The in-plane angle $\psi_{3}$ leaves the normal, and with it the orbit
plane, unchanged, and it is the argument of pericentre, measured from a reference
direction in the orbit plane that depends only on $\psi_{1}$ and $\psi_{2}$. This
ordering will be essential in Section~\ref{sec:circular}, where $\psi_{3}$ must
reduce to a phase shift along the circular orbit, and we adopt it consistently
from here on.

These curves form a five-dimensional manifold
\[
M=\{\,u_{e,\psi}(\,\cdot+\varphi)\,\},
\]
carrying the four geometric parameters $(e,\psi)$ together with the phase
$\varphi\in S^{1}$. The normal angles enter only
through $\nor$, which ranges over the sphere $S^{2}$. The pair $(e,\psi_{3})$
enters only through the planar vector $e\,(\cos\psi_{3},\sin\psi_{3})$, of which
they are the polar coordinates and which ranges over the open unit disc. The centre
of that disc is the circular orbit $e=0$, where the polar coordinates fail. That is
the subject of Section~\ref{sec:circular}.

\begin{proposition}\label{thm:kepler}
For every $e\in(0,1)$ the manifold $M$ is a nondegenerate critical manifold of
$\A_{0}$ at the planar Kepler ellipse $u_{e}$. The Hessian $\mathcal{L}_{e}:=\nabla^{2}\A_{0}(u_{e})$, acting on
$2\pi$-periodic vector fields, has kernel
\[
\ker\mathcal{L}_{e}=\operatorname{span}\{\,\partial_{\tau}u_{e},\ \partial_{e}u_{e},\
J_{1}u_{e},\ J_{2}u_{e},\ J_{3}u_{e}\,\},
\]
which is five-dimensional and coincides with the tangent space $T_{u_{e}}M$.
\end{proposition}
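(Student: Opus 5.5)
The plan is to compute the Hessian explicitly and decompose it along the orbit plane. Differentiating the gradient $\nabla\A_{0}(u)=-\ddot u-u/\abs{u}^{3}$ at $u=u_{e}$, with $r=\abs{u_{e}}$, gives
\[
\mathcal{L}_{e}v=-\ddot v-\frac{v}{r^{3}}+\frac{3\,(u_{e}\cdot v)\,u_{e}}{r^{5}} .
\]
Since $u_{e}$ lies in $\{z=0\}$, this operator preserves the splitting $v=(v_{\parallel},v_{z})$ into the in-plane part and the normal part. The kernel of $\mathcal{L}_{e}$ on $2\pi$-periodic fields is therefore the direct sum of the periodic solutions of two decoupled linear ODEs. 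Each is a regular second order system with $2\pi$-periodic coefficients, because $r\ge 1-e>0$. I would treat the two blocks separately and then compare with $T_{u_{e}}M$.

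\emph{Normal block.} The equation is $\ddot v_{z}+v_{z}/r^{3}=0$. This is the same scalar equation satisfied by each Cartesian coordinate $x(\tau)$, $y(\tau)$ of $u_{e}$, since $\ddot u_{e}=-u_{e}/r^{3}$. The functions $x$ and $y$ are linearly independent, so they span the full two-dimensional solution space. All solutions are thus $2\pi$-periodic, and the periodic kernel of this block is exactly two-dimensional. Because $J_{1}u_{e}=(0,0,y)$ and $J_{2}u_{e}=(0,0,x)$ for $u_{e}$ in the plane $z=0$, this block is spanned by $J_{1}u_{e}$ and $J_{2}u_{e}$.

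\emph{In-plane block.} This block is the linearization of the planar Kepler equation, with a four-dimensional solution space. I would produce four explicit solutions by differentiating families of planar Kepler solutions. If $u_{s}$ is a smooth family of solutions of $\ddot u=-u/\abs{u}^{3}$, then $\partial_{s}u_{s}$ solves the linearized equation. Differentiating in time, in rotation about $z$, and in eccentricity at fixed period gives the periodic solutions $\partial_{\tau}u_{e}$, $J_{3}u_{e}$ and $\partial_{e}u_{e}$. For the fourth solution I would use the scaling family $u^{(a)}(\tau)=a\,u_{e}(a^{-3/2}\tau)$, which solves Kepler's equation by the third law. Its derivative at $a=1$ is
\[
w=u_{e}-\tfrac32\,\tau\,\dot u_{e}.
\]
This solution has linear secular growth, since $\dot u_{e}\not\equiv0$, so it is not periodic. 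The crux is to show that $\partial_{\tau}u_{e},J_{3}u_{e},\partial_{e}u_{e},w$ are linearly independent. Granting that, a periodic solution $\alpha w+\beta\,\partial_{\tau}u_{e}+\gamma J_{3}u_{e}+\delta\,\partial_{e}u_{e}$ must have $\alpha=0$, because the other three terms are bounded while $w$ grows. The in-plane kernel is then exactly three-dimensional.

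\emph{Independence, the main obstacle.} For independence I would compare initial data at the pericentre $\tau=0$, where $u_{e}(0)=(1-e)(1,0,0)$. We have $\partial_{\tau}u_{e}(0)=(0,v_{p},0)$ with $v_{p}>0$, and $J_{3}u_{e}(0)=(0,1-e,0)$. Moreover $\dot J_{3}u_{e}(0)=J_{3}\dot u_{e}(0)=(-v_{p},0,0)$, whereas $\ddot u_{e}(0)$ is radial, along $(1,0,0)$. Hence $\partial_{\tau}u_{e}$ and $J_{3}u_{e}$ are independent for every $e$. The function $\partial_{e}u_{e}$ is periodic and even in $\tau$, while these two are odd and even respectively. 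I would show it is independent of them using the explicit formulas \eqref{eq:ellipse} and \eqref{eq:cnorm}: the radial distance $r=c_{e}^{2}/(1+e\cos\theta)$ depends on $e$ nontrivially, whereas rotations and time shifts preserve the function $r(\tau)$ up to translation. This is where $e>0$ enters. At $e=0$ the time shift and $J_{3}$ coincide on the circle, the chart collapses, and that case is deferred to Section~\ref{sec:circular}.

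\emph{Conclusion.} The two blocks together give a five-dimensional kernel, spanned by the listed fields. Finally, these five fields are exactly the partial derivatives of the parametrization \eqref{eq:M} at $(e,\psi,\varphi)=(e,0,0)$, because $\partial_{\psi_{j}}R(\psi)|_{\psi=0}=J_{j}$. The map \eqref{eq:M} is therefore an immersion there, so $T_{u_{e}}M$ is five-dimensional and equals $\ker\mathcal{L}_{e}$, which is Bott's nondegeneracy condition.
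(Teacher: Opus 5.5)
Your architecture is the paper's: a six-dimensional solution space, the non-periodic scaling field capping the periodic kernel, and five explicit periodic Jacobi fields. Splitting off the normal block, where $x$ and $y$ already span every solution, is a clean touch. The gap is in the step you yourself flag as the crux, the independence of $\dot u_{e}$, $J_{3}u_{e}$ and $\partial_{e}u_{e}$.

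At the pericentre, $\dot u_{e}(0)$ and $J_{3}u_{e}(0)$ are both multiples of $(0,1,0)$. Likewise $\ddot u_{e}(0)=-(1-e)^{-2}(1,0,0)$ and $J_{3}\dot u_{e}(0)=(-v_{p},0,0)$ are both multiples of $(1,0,0)$. So comparing directions separates nothing. The conclusion ``independent for every $e$'' is also false: at $e=0$ the two fields coincide, as you note yourself at the end of that paragraph. What decides it is the magnitudes. The initial data are proportional iff $v_{p}/(1-e)=(1-e)^{-2}/v_{p}$, that is iff $v_{p}^{2}=(1-e)^{-1}$. Since $v_{p}^{2}=(1+e)/(1-e)$, this happens exactly when $e=0$. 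The paper does it more simply by pairing pointwise with $u_{e}$, which kills $J_{3}u_{e}$ and leaves $\inner{\dot u_{e}}{u_{e}}=r\dot r\not\equiv0$. So in your argument, this step, and not the $\partial_{e}u_{e}$ step, is where $e>0$ is used.

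For $\partial_{e}u_{e}$, your parity assignment is wrong. With the reversibility $u_{e}(-\tau)=S\,u_{e}(\tau)$, $S=\mathrm{diag}(1,-1,1)$, both $\dot u_{e}$ and $J_{3}u_{e}$ satisfy $v(-\tau)=-S\,v(\tau)$ (use $J_{3}S=-SJ_{3}$), whereas $\partial_{e}u_{e}$ satisfies $v(-\tau)=S\,v(\tau)$. Stated correctly, this finishes the step at once, since the two parity classes intersect only in $0$ and $\partial_{e}u_{e}(0)=-(1,0,0)\neq0$.

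Your radial-distance argument can also be made rigorous, but only after linearizing. ``$e$ changes $r(\tau)$ nontrivially'' concerns finite deformations, while independence is an infinitesimal statement. The first variation of $\abs{u}$ along the three fields is $\dot r$, $0$ and $\partial_{e}r$ (at fixed $\tau$). At $\tau=0$ one has $\dot r=0$ but $\partial_{e}r=-1$, which forces the coefficient of $\partial_{e}u_{e}$ to vanish. The paper uses angular momentum instead: $\delta C$ vanishes on $\dot u_{e}$ and $J_{3}u_{e}$ but equals $c_{e}'(0,0,1)\neq0$ on $\partial_{e}u_{e}$.

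With either repair the rest of your proof goes through, including the identification with $T_{u_{e}}M$ via $\partial_{\psi_{j}}R|_{\psi=0}=J_{j}$.
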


\begin{proof}
Differentiating $\nabla\A_{0}$ once more, the Hessian is the linear operator
\[
\mathcal{L}_{e}\eta=-\ddot\eta-\frac{\eta}{\abs{u_{e}}^{3}}
+3\,\frac{u_{e}\inner{u_{e}}{\eta}}{\abs{u_{e}}^{5}},
\]
acting on $2\pi$-periodic maps $\eta:\R\to\R^{3}$. This is a
second-order linear ODE for a map with values in $\R^{3}$, so its solutions,
periodic or not, form a space of dimension six, and the kernel of
$\mathcal{L}_{e}$ on $2\pi$-periodic fields is the subspace of the periodic ones
among them.

Write $u_{e}=(x,y,0)$. Five $2\pi$-periodic solutions of
$\mathcal{L}_{e}\eta=0$ are
\[
\partial_{\tau}u_{e}=\dot u_{e},\qquad
J_{3}u_{e}=(-y,x,0),\qquad
\partial_{e}u_{e},\qquad
J_{1}u_{e}=(0,0,y),\qquad J_{2}u_{e}=(0,0,x),
\]
the first two arising from the time-translation and in-plane
rotation invariance of $\A_{0}$, the third from the fact that the family
\eqref{eq:ellipse} consists of $2\pi$-periodic solutions for every $e$, so that
its $e$-derivative is a periodic Jacobi field, and the last two from the
invariance of $\A_{0}$ under rotations out of the plane. The last two take their
values in $\R_{z}$ and the first three in $\R^{2}_{xy}$, so the two groups are
independent of each other, and $J_{1}u_{e}$ and $J_{2}u_{e}$ are independent.
For $e\ne0$ the first three are independent, by the
following two observations. First, the angular
momentum $C(u)=u\times\dot u$, whose first variation at $u_{e}$ in the
direction of a vector field $\eta$ is
$\delta C[\eta]=\eta\times\dot u_{e}+u_{e}\times\dot\eta$, vanishes to first
order along the first two fields,
\[
\delta C[\dot u_{e}]
= u_{e}\times\ddot u_{e}
=u_{e}\times\Bigl(-\frac{u_{e}}{\abs{u_{e}}^{3}}\Bigr)=0,
\qquad
\delta C[J_{3}u_{e}]
=J_{3}\bigl(u_{e}\times\dot u_{e}\bigr)=c_{e}\,J_{3}(0,0,1)=0,
\]
the second by the identity $Ja\times b+a\times Jb=J(a\times b)$ for
$J\in\mathfrak{so}(3)$ (differentiate $e^{tJ}a\times e^{tJ}b=e^{tJ}(a\times b)$
at $t=0$), but not along the eccentricity direction. Differentiating
$u_{e}\times\dot u_{e}=c_{e}\,(0,0,1)$ in $e$ gives, by \eqref{eq:cnorm},
$\delta C[\partial_{e}u_{e}]=c_{e}'\,(0,0,1)\ne0$ for $0<e<1$. Hence if
$a\,\dot u_{e}+b\,J_{3}u_{e}+c\,\partial_{e}u_{e}=0$, applying the linear map
$\delta C$ leaves $c\,c_{e}'\rev{\,(0,0,1)}=0$, so $c=0$. Second, in the remaining relation
$a\,\dot u_{e}+b\,J_{3}u_{e}=0$, take the pointwise inner product with
$u_{e}$. Since $J_{3}u_{e}=(-y,x,0)$ is orthogonal to
$u_{e}=(x,y,0)$, this leaves
$a\,\inner{\dot u_{e}}{u_{e}}=\tfrac{a}{2}\,\tfrac{d}{d\tau}\abs{u_{e}}^{2}=0$.
For $e\ne0$ the radius $\abs{u_{e}}=c_{e}^{2}/(1+e\cos\theta)$ is nonconstant,
so $a=0$, and then $b=0$ because $\abs{J_{3}u_{e}}=\abs{u_{e}}>0$.

One further solution
arises from the scaling symmetry $u\mapsto\lambda^{2}u(\lambda^{-3}\tau)$ of the
Kepler equation, with generator
\[
\eta_{\mathrm{sc}}=2u_{e}-3\tau\,\dot u_{e},\qquad \mathcal{L}_{e}\eta_{\mathrm{sc}}=0 .
\]
This $\eta_{\mathrm{sc}}$ is \emph{not} periodic, because scaling changes the
period,
\[
\eta_{\mathrm{sc}}(\tau+2\pi)-\eta_{\mathrm{sc}}(\tau)=-6\pi\,\dot u_{e}\ne0 .
\]
The periodic solutions are therefore a proper subspace of the
six-dimensional solution space, of dimension at most five. The five fields above
are periodic and independent, so $\ker\mathcal{L}_{e}$ is five-dimensional with
the stated basis, and
this is precisely the tangent space to $M$ at $u_{e}$.
\end{proof}

\subsection{Lyapunov--Schmidt reduction and the continuation theorem}
\label{sec:cont}

By Proposition~\ref{thm:kepler} the whole of $M$ is a nondegenerate critical
manifold of $\A_{0}$.
The hypothesis under which we shall continue is local, namely that $\Phi$ has one
nondegenerate critical point, and it is verified at explicit points by the
computer-assisted proofs of Section~\ref{sec:cap}, never on all of $M$ at once.
A reduction performed globally along $M$ would therefore prove more than is used,
and would pay for it by carrying the geometry of $M$ through the functional
analysis, since one would need the kernel, the range and the projections of
$\nabla^{2}\A_{0}$ at every point of a noncompact five-dimensional manifold. Fixing
the base ellipse first and reducing only in a neighbourhood of it keeps the whole
argument attached to a single Hessian at a single point. We keep
the geometric picture in the statements, and the proof below does not use it.

We work in the Sobolev spaces $X=\Hp(\R^{3})$, the $2\pi$-periodic $H^{2}$ vector
fields, and $Y=\Lp(\R^{3})$, with the $\Lp$ inner product
$\inner{\cdot}{\cdot}_{\Lp}$ throughout. The pair is chosen so
that $\nabla\A_{H}$ is the pointwise Euler--Lagrange expression, a smooth map of
$X$ into $Y$ on the collision-free set, and so that the first variation is
represented by the $\Lp$ pairing, in which the projections below are taken. No
regularity is lost by it, since $\Hp$ embeds in $C^{1}$ and Step 4 shows the
critical points obtained to be smooth.

The continuation is governed by the restriction of the interaction term to the
Kepler manifold, which we name here and make explicit in
Section~\ref{sec:reduced},
\begin{equation}\label{eq:Phidef}
\Phi(e,\psi):=\A_{1}(u_{e,\psi}).
\end{equation}
It is a function of $(e,\psi)$ alone, with no dependence on the phase $\varphi$,
because $\A_{1}$ is invariant under time translation.

The hypothesis below is a property of one pair $(e_{0},\psi_{0})$,
and Theorem~\ref{thm:cap-ecc} verifies it at each pair listed in
Table~\ref{tab:cap-ecc}.

\begin{hypothesis}\label{hyp:reduced}
The eccentricity $e_{0}$ lies in $(0,1)$, the orientation
$\psi_{0}=(\psi_{0,1},\psi_{0,2},\psi_{0,3})$ has $\cos\psi_{0,2}\neq0$, and the
great circle of $\psi_{0}$ avoids the rotation axes of $H$, so that the $H$-orbit
of $u_{e_{0},\psi_{0}}$ is free of collisions. Moreover $(e_{0},\psi_{0})$ is a
nondegenerate critical point of $\Phi$, that is $\nabla\Phi(e_{0},\psi_{0})=0$ and
$\nabla^{2}\Phi(e_{0},\psi_{0})$ is invertible.
\end{hypothesis}

\begin{theorem}\label{thm:cont}
Let $(e_{0},\psi_{0})$ satisfy Hypothesis~\ref{hyp:reduced}. Then there exist
$\varepsilon_{0}>0$ and a $C^{1}$ branch $\varepsilon\mapsto u_{\varepsilon}\in
X$, defined for $0\le\varepsilon<\varepsilon_{0}$, of critical points of
$\A_{H}(\,\cdot\,;\varepsilon)$ with $u_{0}=u_{e_{0},\psi_{0}}$, unique up to
time translation, in the sense that every critical point of
$\A_{H}(\,\cdot\,;\varepsilon)$ in a fixed neighbourhood of $u_{0}$ in $X$ equals
$u_{\varepsilon}(\,\cdot+\varphi)$ for some $\varphi\in S^{1}$, so that the
solutions come in circles, one orbit of the time-translation action. Consequently the
symmetric $(n+1)$-body problem possesses a family of
$2\pi\varepsilon^{1/2}$-periodic solutions, carrying the symmetry \eqref{eq:symm}
of the group $H$, bifurcating from the Kepler ellipse $u_{e_{0},\psi_{0}}$.
Moreover the branch is asymptotic to the base ellipse,
\begin{equation}\label{eq:expansion}
u_{\varepsilon}=u_{e_{\varepsilon},\psi_{\varepsilon}}+O(\varepsilon)
\quad\text{in }X,
\qquad
(e_{\varepsilon},\psi_{\varepsilon})=(e_{0},\psi_{0})+O(\varepsilon),
\end{equation}
so that in particular $u_{\varepsilon}=u_{e_{0},\psi_{0}}+O(\varepsilon)$.
\end{theorem}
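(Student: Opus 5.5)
We carry out a Lyapunov--Schmidt reduction at the single base ellipse $u_{0}=u_{e_{0},\psi_{0}}$. Set $K=\ker\nabla^{2}\A_{0}(u_{0})$. By Proposition~\ref{thm:kepler}, transported by the rotation $R(\psi_{0})$ and using $O(3)$-invariance of $\A_{0}$, $K$ is five-dimensional and equals $T_{u_{0}}M$. The Hessian $\mathcal{L}=\nabla^{2}\A_{0}(u_{0}):X\to Y$ is self-adjoint in $\Lp$ and Fredholm of index zero: it is $-\partial_{\tau}^{2}$ plus a bounded multiplication operator, which is a compact perturbation from $\Hp$ to $\Lp$. Hence $\operatorname{ran}\mathcal{L}=K^{\perp}$, and $\mathcal{L}$ restricts to an isomorphism $K^{\perp}\cap X\to K^{\perp}$.

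Let $P$ be the $\Lp$-orthogonal projection onto $K$ and $Q=I-P$. We do not write points near $u_{0}$ as $u_{0}+k+w$. Instead we use the manifold chart: $u=u_{e,\psi}(\cdot+\varphi)+w$ with $w\in K^{\perp}\cap X$. The map $(e,\psi,\varphi,w)\mapsto u$ is a local diffeomorphism onto a neighbourhood of $u_{0}$ in $X$. This uses $\cos\psi_{0,2}\ne0$ and $e_{0}\in(0,1)$, so that the chart derivative has range $K$. By time-translation invariance we may fix $\varphi=0$ and recover the circle of solutions at the end. Hypothesis~\ref{hyp:reduced} guarantees the $H$-orbit of $u_{0}$ is collision-free, so $\A_{1}$ is smooth on a neighbourhood of $u_{0}$ in $X\hookrightarrow C^{1}$.

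The range equation is
\[
Q\nabla\A_{H}(u_{e,\psi}+w;\varepsilon)=0 .
\]
At $\varepsilon=0$, $w=0$ it holds for every $(e,\psi)$, because $u_{e,\psi}$ is critical for $\A_{0}$. Its $w$-derivative there is $Q\mathcal{L}$ on $K^{\perp}$, which is invertible. The implicit function theorem therefore gives a smooth $w(e,\psi,\varepsilon)$ with $w(e,\psi,0)=0$, so $w=O(\varepsilon)$. Define the reduced function
\[
\Psi(e,\psi,\varepsilon)=\A_{H}\bigl(u_{e,\psi}+w(e,\psi,\varepsilon);\varepsilon\bigr).
\]
By the standard argument, critical points of $\Psi$ in $(e,\psi)$ correspond to critical points of $\A_{H}$. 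Concretely, the residual $\nabla\A_{H}$ lies in $K$, close to the tangent space of the image of the chart, and it vanishes if it annihilates that tangent space. The phase direction is handled automatically by time-translation invariance: $\inner{\nabla\A_{H}(u)}{\dot u}=0$ for every $u$.

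The key expansion comes next. $\A_{0}$ is constant on $M$, since the ellipses share energy and period, or simply because $M$ is a connected critical manifold. Together with $w=O(\varepsilon)$ and $\nabla\A_{0}(u_{e,\psi})=0$, this gives
\[
\Psi=\A_{0}(u_{e_{0}})+\varepsilon\,\Phi(e,\psi)+O(\varepsilon^{2})
\]
in $C^{2}$. Here the $O(\varepsilon^{2})$ comes from $\tfrac12\inner{\mathcal{L}w}{w}$ and $\varepsilon\,\nabla\A_{1}\cdot w$. Thus
\[
\varepsilon^{-1}\nabla_{(e,\psi)}\Psi=\nabla\Phi+O(\varepsilon),
\]
and this extends smoothly to $\varepsilon=0$. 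A second application of the implicit function theorem at the nondegenerate critical point $(e_{0},\psi_{0})$ of Hypothesis~\ref{hyp:reduced} yields a unique $C^{1}$ branch $(e_{\varepsilon},\psi_{\varepsilon})=(e_{0},\psi_{0})+O(\varepsilon)$. Setting $u_{\varepsilon}=u_{e_{\varepsilon},\psi_{\varepsilon}}+w$ gives \eqref{eq:expansion}.

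Uniqueness up to phase follows from the local uniqueness in both applications of the implicit function theorem, combined with the chart. Smoothness follows by bootstrapping the ODE. Undoing the rescaling \eqref{eq:scale} together with Lemma~\ref{lem:central} gives the $(n+1)$-body motions.

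The main obstacle is the $C^{2}$ control of the remainder that justifies dividing by $\varepsilon$. One needs $\partial_{\varepsilon}w$, and its derivatives in $(e,\psi)$, to be bounded uniformly near $(e_{0},\psi_{0})$. One also needs to check that the phase degeneracy does not spoil the second implicit function theorem. I would first try to handle this by writing $\nabla_{(e,\psi)}\Psi=\varepsilon\,G(e,\psi,\varepsilon)$ with $G$ obtained via Hadamard's lemma, which requires $C^{2}$ smoothness of $\A_{H}$ on $X$ jointly in $\varepsilon$.
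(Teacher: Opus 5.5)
Your proposal is correct and follows essentially the paper's own proof. Both carry out a Lyapunov--Schmidt reduction at the single base ellipse with $\operatorname{ran}\mathcal{L}_{0}=K^{\perp}$, solve the range equation $Q\nabla\A_{H}(u_{e,\psi}+w;\varepsilon)=0$ by the implicit function theorem, and dispose of the phase direction by $\langle\nabla\A_{H}(u),\dot u\rangle=0$ plus openness of the spanning condition. Both then use the expansion $c_{0}+\varepsilon\Phi+O(\varepsilon^{2})$ and a second implicit function theorem after dividing by $\varepsilon$; your Hadamard-lemma formulation is exactly what the paper's remainder $G$ in Step~3 encodes, and the phase cannot spoil it because the reduced function depends on $(e,\psi)$ alone.

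The one step to spell out is the phase normalization in your chart $u_{e,\psi}(\cdot+\varphi)+w$. Translating a critical point does not keep $w$ in $K^{\perp}$, so ``fix $\varphi=0$'' is not literal invariance. It needs a short intermediate-value argument on the continuous function $s\mapsto\varphi\bigl(u(\cdot-s)\bigr)$, which equals $-s$ at $u=u_{0}$. That argument plays the role of the paper's Step~4, and it does not need time translation to be differentiable on $X$.
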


\begin{proof}
The four parameters are $e$ and $\psi=(\psi_{1},\psi_{2},\psi_{3})$, we write
$u_{e,\psi}$ for the curve \eqref{eq:M} at a phase fixed once and for all, and
$u_{0}=u_{e_{0},\psi_{0}}$. The phase $\varphi$ is not one of the four. It is
held fixed in $u_{e,\psi}$ and enters the argument as the fifth kernel direction
$\dot u_{0}$.
The proof is in
four steps. The first solves the equation in the directions transverse to the
kernel of the Hessian at $u_{0}$, the second shows that what remains is the
vanishing of the gradient of a function of $(e,\psi)$ alone, the third identifies
that function with $\Phi$ to leading order, and the fourth removes the
phase.

\emph{The kernel at the base point.} The Hessian
$\mathcal{L}_{0}=\nabla^{2}\A_{0}(u_{0})$
is self-adjoint as an unbounded operator on $Y$ with domain $X$, and Fredholm
of index $0$ from $X$ to $Y$, being
a relatively compact perturbation of $-\partial_{\tau}^{2}$. Its kernel follows
from Proposition~\ref{thm:kepler} by the group action. The action $\A_{0}$ is
invariant under phase shifts and under $O(3)$, and $u_{0}$ lies on the orbit of the
planar ellipse $u_{e_{0}}$ under that group, say $u_{0}=g\,u_{e_{0}}$. Hence
$\nabla^{2}\A_{0}(u_{0})=g\,\nabla^{2}\A_{0}(u_{e_{0}})\,g^{-1}$ and
$\ker\mathcal{L}_{0}=g\,\ker\nabla^{2}\A_{0}(u_{e_{0}})$, and $g$ takes the five generators of
Proposition~\ref{thm:kepler} to the same five generators at $u_{0}$, the three
rotational ones after the change of basis $J_{i}\mapsto R(\psi_{0})J_{i}R(\psi_{0})^{-1}$
of $\mathfrak{so}(3)$, which changes nothing, the span
$\{Ju_{0}:J\in\mathfrak{so}(3)\}$ not depending on the basis chosen in
$\mathfrak{so}(3)$. Therefore,
writing $K$ for the kernel,
\begin{equation}\label{eq:Kbasis}
K:=\ker\mathcal{L}_{0}=\operatorname{span}\{\,\dot u_{0},\ \partial_{e}u_{0},\
J_{1}u_{0},\ J_{2}u_{0},\ J_{3}u_{0}\,\}=T_{u_{0}}M,
\qquad \operatorname{ran}\mathcal{L}_{0}=K^{\perp},
\end{equation}
the second equality because a self-adjoint Fredholm operator has closed range
equal to the orthogonal complement of its kernel. We write $W:=K^{\perp}\cap X$, let
$Q$ denote the $\Lp$-orthogonal projection onto $K^{\perp}$, and let $P:=I-Q$ be
the projection onto $K$.

\emph{Step 1. The correction transverse to $K$.}
Both $\A_{0}$ and $\A_{1}$ are smooth on the open subset of $X$ on which $u$
and all the differences $u-Lu$, $L\neq I$, stay away from the origin, and $u_{0}$
lies in that subset by hypothesis, so $\nabla\A_{H}$ is a smooth map of a
neighbourhood of $u_{0}$ in $X$ into $Y$.
Consider
\[
N(e,\psi,w;\varepsilon):=Q\,\nabla\A_{H}(u_{e,\psi}+w;\varepsilon)
\in K^{\perp}\cap Y,
\qquad (e,\psi)\ \text{near}\ (e_{0},\psi_{0}),\quad w\in W .
\]
For every $(e,\psi)$ the curve $u_{e,\psi}$ is a critical point of $\A_{0}$, so
$N(e,\psi,0;0)=Q\,\nabla\A_{0}(u_{e,\psi})=0$. The derivative in $w$ at
$(e_{0},\psi_{0},0;0)$ is $\mathcal{L}_{0}$ restricted to $W$, and it is an
isomorphism onto
$K^{\perp}\cap Y$, being injective because $\ker\mathcal{L}_{0}=K$ and surjective
because $\operatorname{ran}\mathcal{L}_{0}=K^{\perp}$. The implicit function theorem
therefore gives $\varepsilon_{0}>0$, a neighbourhood of $(e_{0},\psi_{0})$, and a
unique small solution
\[
w=w(e,\psi,\varepsilon)\in W,\qquad
N\bigl(e,\psi,w(e,\psi,\varepsilon);\varepsilon\bigr)=0,
\]
of the same regularity as $\A_{H}$. Since $w(e,\psi,0)=0$ for every
$(e,\psi)$, the parameter derivatives of $w$ also vanish at $\varepsilon=0$, and
differentiability in $\varepsilon$ then gives $w=O(\varepsilon)$ together with its
derivatives in the parameters.

\emph{Step 2. The equations that remain are equivalent to
$\nabla\Phi_{\varepsilon}(e,\psi)=0$.} Put
\begin{equation}\label{eq:Phieps}
\Phi_{\varepsilon}(e,\psi)
:=\A_{H}\bigl(u_{e,\psi}+w(e,\psi,\varepsilon);\varepsilon\bigr).
\end{equation}
We claim that if $\nabla\Phi_{\varepsilon}(e,\psi)=0$ then
$u:=u_{e,\psi}+w(e,\psi,\varepsilon)$ is a critical
point of $\A_{H}(\,\cdot\,;\varepsilon)$.

By Step 1 the gradient $\nabla\A_{H}(u)$ lies in $K$. Write $\partial$ for any of
$\partial_{e},\partial_{\psi_{1}},\partial_{\psi_{2}},\partial_{\psi_{3}}$.
Differentiating \eqref{eq:Phieps}, and using that $\partial w$ lies in the
fixed subspace $W\subset K^{\perp}$ while $\nabla\A_{H}(u)$ lies in $K$, so that
the two pair to zero,
\[
\partial\,\Phi_{\varepsilon}(e,\psi)
=\inner{\nabla\A_{H}(u)}{\partial u_{e,\psi}+\partial w}_{\Lp}
=\inner{\nabla\A_{H}(u)}{\partial u_{e,\psi}}_{\Lp} .
\]
So $\nabla\Phi_{\varepsilon}(e,\psi)=0$ says that $\nabla\A_{H}(u)$ is orthogonal
to the four vectors $\partial u_{e,\psi}$. The fifth test
direction is $\dot u$, and it yields no equation, because $\A_{H}$ is invariant
under time translation,
\[
\inner{\nabla\A_{H}(u)}{\dot u}_{\Lp}
=\frac{d}{d\varphi}\Big|_{\varphi=0}\A_{H}\bigl(u(\cdot+\varphi)\bigr)=0 .
\]
Differentiating this identity in $u$ at a critical point gives
$\nabla^{2}\A_{H}(u)\dot u=0$, so $\nabla^{2}\A_{H}$ is singular at every critical
point and none of them is isolated. The phase is nonetheless neither an unknown of
$\Phi_{\varepsilon}$, which depends on $(e,\psi)$ alone, nor an equation, by the
identity above, and four equations in four unknowns are left.
It remains to see that those four vectors and $\dot u$
project onto a basis of $K$. At $(e_{0},\psi_{0},0)$ they are the four
parameter derivatives of \eqref{eq:M} at $u_{0}$ together with the velocity, and
these span $K$ as in \eqref{eq:Kbasis} precisely when \eqref{eq:M} is an immersion
at $(e_{0},\psi_{0})$, which is what the two hypotheses $e_{0}\in(0,1)$ and
$\cos\psi_{0,2}\neq0$ supply, by Remark~\ref{rem:immersion}.
Spanning after projection is an open condition, and the
five vectors depend continuously on $(e,\psi)$ and $\varepsilon$, so it persists
for $(e,\psi)$ near $(e_{0},\psi_{0})$ and $\varepsilon$ small. A vector $g\in
K$ satisfies $\inner{g}{v}_{\Lp}=\inner{g}{Pv}_{\Lp}$ for every $v$, so a vector
of $K$ orthogonal to five vectors whose projections span $K$ is zero, and
$\nabla\A_{H}(u)=0$, which proves the claim.
This is exactly what fails at the circular orbit, where
$\partial_{\psi_{3}}u_{0}$ and $\dot u_{0}$ coincide and only four independent
directions are produced.  

\emph{Step 3. The reduced function to leading order.} The value
$\A_{0}(u_{e,\psi})$ does not depend on the parameters, since
$\partial\,\A_{0}(u_{e,\psi})
=\inner{\nabla\A_{0}(u_{e,\psi})}{\partial u_{e,\psi}}_{\Lp}=0$. Call it $c_{0}$.
Writing $\mathcal{L}=\nabla^{2}\A_{0}(u_{e,\psi})$ and expanding in $w$,
\[
\A_{0}(u_{e,\psi}+w)=c_{0}+\inner{\nabla\A_{0}(u_{e,\psi})}{w}_{\Lp}
+\tfrac12\inner{\mathcal{L}w}{w}_{\Lp}+O(\norm{w}^{3})
=c_{0}+O(\varepsilon^{2}),
\]
the linear term vanishing because $u_{e,\psi}$ is critical for $\A_{0}$, and
$\varepsilon\A_{1}(u_{e,\psi}+w)=\varepsilon\Phi(e,\psi)
+O(\varepsilon^{2})$ by \eqref{eq:Phidef}. Differentiating the two expansions
once and twice in the parameters gives the same estimates for those derivatives,
since $w$ and its parameter derivatives are $O(\varepsilon)$. Hence
\begin{equation}\label{eq:redexpand}
\Phi_{\varepsilon}(e,\psi)=c_{0}+\varepsilon\,\Phi(e,\psi)
+\varepsilon^{2}G(e,\psi,\varepsilon),
\end{equation}
with a remainder $G$ that is $C^{2}$ in the parameters, uniformly in
$\varepsilon$ small. Dividing the gradient of \eqref{eq:redexpand} by
$\varepsilon$, the critical points of $\Phi_{\varepsilon}$ are for
$\varepsilon\neq0$ exactly the zeros of
$\nabla\Phi(e,\psi)+\varepsilon\nabla G(e,\psi,\varepsilon)$, a map that is
$C^{1}$ up to and including
$\varepsilon=0$. Its leading term $\Phi$, like $\Phi_{\varepsilon}$
itself, is a function of $(e,\psi)$ alone, so neither of them inherits the
singularity of $\nabla^{2}\A_{H}$ found in Step 2, and the invertibility of
$\nabla^{2}\Phi$ assumed in Hypothesis~\ref{hyp:reduced} is a condition on four
variables from which the phase is absent.

The map $\nabla\Phi+\varepsilon\nabla G$ vanishes at $(e_{0},\psi_{0},0)$, because
$\nabla\Phi(e_{0},\psi_{0})=0$ by hypothesis, and
its parameter derivative there is $\nabla^{2}\Phi(e_{0},\psi_{0})$, invertible by
hypothesis, so
the implicit function theorem yields a unique $C^{1}$ branch
$(e_{\varepsilon},\psi_{\varepsilon})\to(e_{0},\psi_{0})$ of critical points of
$\Phi_{\varepsilon}$. By Step 2,
$u_{\varepsilon}:=u_{e_{\varepsilon},\psi_{\varepsilon}}
+w(e_{\varepsilon},\psi_{\varepsilon},\varepsilon)$ is a critical
point of $\A_{H}(\,\cdot\,;\varepsilon)$, and undoing the rescaling
\eqref{eq:scale} gives the stated periodic solutions. The expansion
\eqref{eq:expansion} is the two statements just proved, namely
$w=O(\varepsilon)$ from Step 1 and
$(e_{\varepsilon},\psi_{\varepsilon})=(e_{0},\psi_{0})+O(\varepsilon)$, which
together give $u_{\varepsilon}=u_{e_{0},\psi_{0}}+O(\varepsilon)$.

\emph{Step 4. Uniqueness up to time translation.} Every critical point of
$\A_{H}(\,\cdot\,;\varepsilon)$ near $u_{0}$ is
smooth, with bounds depending only on the neighbourhood, by bootstrapping the
Euler--Lagrange equation \eqref{eq:EL1}, so time translation acts differentiably
on such points. The four parameters, the correction $w$ and the phase $\varphi$
together coordinatize a neighbourhood of $u_{0}$ in the set of critical points,
because the four directions $\partial u_{0}$ and the velocity $\dot u_{0}$ span
$K$ by Step 2, while $W$ is a complement of $K$. Any
critical point near $u_{0}$ is therefore $(u_{e,\psi}+w)(\,\cdot+\varphi)$ for one
choice of the three, and translating its phase away leaves a critical point
$u_{e,\psi}+w$, which Steps 1 and 3 identify as the one already constructed. So the
critical points near $u_{0}$ form the single circle traced by translating
$u_{\varepsilon}$ in time.
\end{proof}

\begin{remark}\label{rem:immersion}
The chart \eqref{eq:M}, phase included, is an immersion at $(e,\psi)$ if and
only if $e\neq0$ and $\cos\psi_{2}\neq0$, the axes of the three orientation
derivatives being $e_{1}$, $e^{\psi_{1}J_{1}}e_{2}$ and $\nor$, of determinant
$\cos\psi_{2}$. These are the two polar singularities of the chart, that of
$(e,\psi_{3})$ at the centre of the disc of eccentricity vectors and that of
$(\psi_{1},\psi_{2})$ at the pole of the sphere of normals, defects of the
coordinates and not of $M$. Neither is restrictive here, the smallest
$\abs{\cos\psi_{2}}$ over the rows of Tables~\ref{tab:cap-e0}
and~\ref{tab:cap-ecc} being $0.23$.
\end{remark}

\begin{remark}\label{rem:local}
Only the Hessian at the single ellipse $u_{0}$ entered the argument. Step 1 used
$\ker\mathcal{L}_{0}$ and $\operatorname{ran}\mathcal{L}_{0}$, and no property of
$\nabla^{2}\A_{0}$ at any other point of $M$ was required, the nondegeneracy at
neighbouring parameters being replaced by the openness of the condition in Step 2.
This is why Proposition~\ref{thm:kepler}, although stated for the whole family, is
used at one point at a time, once for each row of the tables of
Section~\ref{sec:cap}.
\end{remark}

\subsection{The reduced function $\Phi$ and its derivatives}\label{sec:reduced}

It remains to make the \emph{reduced function} $\Phi=\A_{1}|_{M}$ of
\eqref{eq:Phidef} explicit. It is a function of the eccentricity $e$ and the
spatial orientation $\psi=(\psi_{1},\psi_{2},\psi_{3})$ alone, and by the
$N_{O(3)}(H)$-invariance of $\A_{1}$ it is moreover invariant under the (finite)
action of the normalizer on $\psi$. Writing the interaction term explicitly,
\begin{equation}\label{eq:Phi}
\Phi(e,\psi)=\A_{1}|_{M}
=\frac12\int_{0}^{2\pi}\sum_{L\in H\setminus\{I\}}
\frac{1}{\abs{(I-L)u_{e,\psi}}}\,d\tau .
\end{equation}

\paragraph{Change of variables to the true anomaly.}
We re-express the time integral \eqref{eq:Phi} as an integral over the true
anomaly $\theta$. Decompose $u_{e,\psi}=r(\theta)\,R(\psi)\,\hat e(\theta)$ with
$R(\psi)=e^{\psi_{1}J_{1}}e^{\psi_{2}J_{2}}e^{\psi_{3}J_{3}}\in SO(3)$,
$\hat e(\theta)=(\cos\theta,\sin\theta,0)$ the in-plane unit vector, and $r$ as in
\eqref{eq:ellipse}. Since $\abs{(I-L)u_{e,\psi}}=r\,\abs{(I-L)R(\psi)\hat e}$, and
since $\dot\theta=c_{e}/r^{2}$ converts the time element to
$d\tau=r^{2}c_{e}^{-1}\,d\theta$, we obtain
\begin{equation}\label{eq:Phi2}
\Phi(e,\psi)=\int_{0}^{2\pi}W_{e}(\theta)\,S(\hat u)\,d\theta,
\qquad \hat u=R(\psi)\,\hat e(\theta),
\end{equation}
where $\hat u$ is the unit vector tracing the great circle
$R(\psi)\{\hat e(\theta)\}$ on
the sphere $S^{2}$, and the \emph{weight} $W_{e}$ and the geometric \emph{sum} $S$
are
\begin{equation}\label{eq:W}
W_{e}(\theta)=\frac{c_{e}}{2\,(1+e\cos\theta)},
\qquad\qquad
S(\hat u)=\sum_{L\ne I}\frac{1}{\abs{\hat u-L\hat u}} .
\end{equation}
Here $S$ is the purely geometric, $e$-independent interaction factor on the
sphere, and it measures how close the direction $\hat u$ comes to the rotation
axes of $H$. It is even, $S(-\hat u)=S(\hat u)$, because
$\abs{(-\hat u)-L(-\hat u)}=\abs{\hat u-L\hat u}$. The scalar weight $W_{e}$
collects all the radial and time-rescaling factors. The factor $r^{-1}$ from
$\abs{(I-L)u}^{-1}$ and the factor $r^{2}c_{e}^{-1}$ from $d\tau$ combine to the
net weight $r\,c_{e}^{-1}$, and with $r=c_{e}^{2}/(1+e\cos\theta)$ and the prefactor
$\tfrac12$ of \eqref{eq:Phi}, this is exactly $W_{e}$, in which
$c_{e}=\sqrt{1-e^{2}}$ by \eqref{eq:cnorm}. The integrand is singular precisely
when $L\hat u=\hat u$ for some $L\ne I$, that is, when the great circle of
$\hat u$ passes through a rotation axis of $H$. The admissible orientations $\psi$
are those whose great circle avoids all such axes, and on this open set $\Phi$ is
smooth.

\paragraph{The gradient.}
Only $\hat u$ depends on the orientation $\psi$, and only $W_{e}$ depends on the
eccentricity $e$, so differentiating \eqref{eq:Phi2} under the integral sign
yields
\[
\partial_{\psi_{j}}\Phi
=\int_{0}^{2\pi}W_{e}\,\partial_{\psi_{j}}S(\hat u)\,d\theta,
\qquad
\partial_{e}\Phi
=\int_{0}^{2\pi}\partial_{e}W_{e}\,S(\hat u)\,d\theta ,
\]
the eccentricity derivative falling entirely on the weight. With
$\partial_{\psi_{j}}S(\hat u)=\sum_{L\ne I}\partial_{\psi_{j}}
\abs{\hat u-L\hat u}^{-1}$ and
$\abs{\hat u-L\hat u}^{-1}=\inner{(I-L)\hat u}{(I-L)\hat u}^{-1/2}$,
\begin{equation}\label{eq:dpsi}
\partial_{\psi_{j}}\frac{1}{\abs{\hat u-L\hat u}}
=-\frac{\inner{(I-L)\hat u}{(I-L)\,\partial_{\psi_{j}}\hat u}}
{\abs{(I-L)\hat u}^{3}} .
\end{equation}
The orientation derivatives of $\hat u$ are obtained by inserting the appropriate
generator at the factor it differentiates in
$\hat u=e^{\psi_{1}J_{1}}e^{\psi_{2}J_{2}}e^{\psi_{3}J_{3}}\hat e(\theta)$,
\[
\partial_{\psi_{1}}\hat u=J_{1}\hat u,\qquad
\partial_{\psi_{2}}\hat u=e^{\psi_{1}J_{1}}J_{2}e^{\psi_{2}J_{2}}
e^{\psi_{3}J_{3}}\hat e,\qquad
\partial_{\psi_{3}}\hat u=e^{\psi_{1}J_{1}}e^{\psi_{2}J_{2}}J_{3}
e^{\psi_{3}J_{3}}\hat e .
\]
Only the outermost angle gives the clean form $\partial_{\psi_{1}}\hat
u=J_{1}\hat u$. The inner generators do not commute through the outer rotations,
so $\partial_{\psi_{2}}\hat u$ and $\partial_{\psi_{3}}\hat u$ are the nested
expressions above and must \emph{not} be replaced by $J_{2}\hat u,J_{3}\hat u$.
The weight derivative is explicit from \eqref{eq:cnorm}, using $c_{e}'=-e/c_{e}$,
\begin{equation}\label{eq:dW}
\partial_{e}W_{e}
=\frac{c_{e}'}{2(1+e\cos\theta)}-\frac{c_{e}\cos\theta}{2(1+e\cos\theta)^{2}}
=-\frac{e}{2c_{e}(1+e\cos\theta)}
-\frac{c_{e}\cos\theta}{2(1+e\cos\theta)^{2}} .
\end{equation}

\paragraph{The Hessian.}
A further differentiation gives the orientation block
\[
\partial_{\psi_{k}}\partial_{\psi_{j}}\Phi
=\int_{0}^{2\pi}W_{e}\,\partial_{\psi_{k}}\partial_{\psi_{j}}S(\hat u)\,d\theta,
\qquad
\partial_{\psi_{k}}\partial_{\psi_{j}}S(\hat u)
=\sum_{L\ne I}\partial_{\psi_{k}}\partial_{\psi_{j}}
\frac{1}{\abs{\hat u-L\hat u}},
\]
where, using
$\partial_{\psi_{k}}\abs{(I-L)\hat u}^{-3}
=-3\,\abs{(I-L)\hat u}^{-5}\inner{(I-L)\hat u}{(I-L)\partial_{\psi_{k}}\hat u}$,
\begin{align}\label{eq:hpsi}
-\partial_{\psi_{k}}\partial_{\psi_{j}}\frac{1}{\abs{\hat u-L\hat u}}
&=\frac{\inner{(I-L)\partial_{\psi_{k}}\hat u}{(I-L)\partial_{\psi_{j}}\hat u}}
{\abs{(I-L)\hat u}^{3}}
+\frac{\inner{(I-L)\hat u}{(I-L)\partial_{\psi_{k}}\partial_{\psi_{j}}\hat u}}
{\abs{(I-L)\hat u}^{3}}\nonumber\\
&\quad-3\,
\frac{\inner{(I-L)\hat u}{(I-L)\partial_{\psi_{j}}\hat u}\,
\inner{(I-L)\hat u}{(I-L)\partial_{\psi_{k}}\hat u}}
{\abs{(I-L)\hat u}^{5}} .
\end{align}
The second orientation derivatives again insert both generators at the factors
they differentiate, for instance
$\partial_{\psi_{1}}^{2}\hat u=J_{1}^{2}\hat u$ and
$\partial_{\psi_{2}}\partial_{\psi_{1}}\hat u
=J_{1}e^{\psi_{1}J_{1}}J_{2}e^{\psi_{2}J_{2}}e^{\psi_{3}J_{3}}\hat e$. The mixed
and pure eccentricity blocks move the $e$-derivatives onto $W_{e}$,
\[
\partial_{e}\partial_{\psi_{j}}\Phi
=\int_{0}^{2\pi}\partial_{e}W_{e}\,\partial_{\psi_{j}}S(\hat u)\,d\theta,
\qquad
\partial_{e}^{2}\Phi
=\int_{0}^{2\pi}\partial_{e}^{2}W_{e}\,S(\hat u)\,d\theta,
\]
with $\partial_{\psi_{j}}\abs{\hat u-L\hat u}^{-1}$ given by \eqref{eq:dpsi} and,
using $c_{e}''=-c_{e}^{-3}$,
\begin{equation}\label{eq:d2W}
\partial_{e}^{2}W_{e}
=\frac{c_{e}''}{2(1+e\cos\theta)}
-\frac{c_{e}'\cos\theta}{(1+e\cos\theta)^{2}}
+\frac{c_{e}\cos^{2}\theta}{(1+e\cos\theta)^{3}}
=-\frac{1}{2c_{e}^{3}(1+e\cos\theta)}
+\frac{e\cos\theta}{c_{e}(1+e\cos\theta)^{2}}
+\frac{c_{e}\cos^{2}\theta}{(1+e\cos\theta)^{3}} .
\end{equation}
These determine the gradient
$\nabla\Phi=(\partial_{e}\Phi,\partial_{\psi_{1}}\Phi,\partial_{\psi_{2}}\Phi,
\partial_{\psi_{3}}\Phi)$ and the Hessian $\nabla^{2}\Phi$ completely. Each entry
is a finite sum over $L\in H\setminus\{I\}$ of the explicit inner-product
expressions above, weighted by $W_{e}$, $\partial_{e}W_{e}$, or
$\partial_{e}^{2}W_{e}$, leaving only the single $\theta$-integral to be evaluated
in the computer-assisted verification of Hypothesis~\ref{hyp:reduced}.

Section~\ref{sec:cap} verifies the hypothesis at explicit points,
by interval arithmetic on the integral in $\theta$ and on the finite group sum, and
the resulting statement is Theorem~\ref{thm:cap-ecc}.

\section{The circular case, $e_{0}=0$}\label{sec:circular}

The chart \eqref{eq:M} degenerates at the circular orbit, which is why
Section~\ref{sec:cont} required $e_{0}>0$. The obstruction is one of coordinates
only. Section~\ref{ss:circ-chart} replaces the singular pair $(e,\psi_{3})$ by the
eccentricity vector and checks that $M$ is still a nondegenerate critical manifold
at the circle, Section~\ref{ss:circ-hessian} writes the gradient and the Hessian in
the new chart in terms of the integrals of Section~\ref{sec:reduced}, and
Section~\ref{ss:circ-cont} proves the continuation, Theorem~\ref{thm:circular}, the
counterpart of Theorem~\ref{thm:cont} at $e_{0}=0$.

\subsection{The chart singularity and the eccentricity-vector chart}
\label{ss:circ-chart}

We work with the chart \eqref{eq:M} of $M$, whose parameters are the eccentricity
$e$, the in-plane angle $\psi_{3}$, the two normal angles $(\psi_{1},\psi_{2})$ and
the phase $\varphi$. At the circle
$\partial_{\psi_{3}}u_{0}=\dot u_{0}$, so that chart is not an
immersion and Theorem~\ref{thm:cont} does not apply. The manifold itself is
unaffected.

The remedy is to replace the singular pair $(e,\psi_{3})$ by Cartesian
coordinates, the \emph{eccentricity vector}
\begin{equation}\label{eq:eps}
(\epsilon_{1},\epsilon_{2})\in\R^{2},\qquad
\epsilon_{1}=e\cos\psi_{3},\qquad \epsilon_{2}=e\sin\psi_{3},
\end{equation}
whose length $\sqrt{\epsilon_{1}^{2}+\epsilon_{2}^{2}}=e$ is the eccentricity and
whose direction $\arg(\epsilon_{1},\epsilon_{2})=\psi_{3}$ is the argument of
pericentre. The map $(e,\psi_{3})\mapsto(\epsilon_{1},\epsilon_{2})$ is the
polar-to-Cartesian map, so the singularity at the circle is the
ordinary singularity of polar coordinates at the origin, and the origin is the
circle $e=0$. At $e=0$ the angle $\psi_{3}$ only shifts time,
$\tau\mapsto\tau+\psi_{3}$, so without a compensating phase the rays
$\psi_{3}=\mathrm{const}$ would reach different curves as $e\to0$ and the chart
would not be defined at the origin. The parameters
$(\epsilon_{1},\epsilon_{2},\psi_{1},\psi_{2})$ therefore carry the curve
$u_{e,\psi}$ of \eqref{eq:M} with the phase $\varphi=-\psi_{3}$, which cancels
that shift. In the
chart $(\epsilon_{1},\epsilon_{2},\psi_{1},\psi_{2},\varphi)$ the manifold $M$ is
then regular through the circle, and the next proposition supplies, in that chart,
the basis of $K$ that Steps 2 and 4 of Theorem~\ref{thm:cont} require.

\begin{proposition}\label{prop:circ}
Let $u_{0}$ be the circular orbit of orientation $(\psi_{1},\psi_{2})$ with
$\cos\psi_{2}\neq0$, and let $\mathcal{L}_{0}=\nabla^{2}\A_{0}(u_{0})$. Then
\[
K:=\ker\mathcal{L}_{0}=\operatorname{span}\{\,\partial_{\epsilon_{1}}u_{0},\
\partial_{\epsilon_{2}}u_{0},\ \partial_{\psi_{1}}u_{0},\
\partial_{\psi_{2}}u_{0},\ \dot u_{0}\,\}=T_{u_{0}}M
\]
is five-dimensional. In particular $M$ is a nondegenerate critical manifold of
$\A_{0}$ at the circle, and the chart
$(\epsilon_{1},\epsilon_{2},\psi_{1},\psi_{2},\varphi)$ is an immersion there.
\end{proposition}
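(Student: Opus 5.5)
The plan is to mirror the proof of Proposition~\ref{thm:kepler}, now at $e=0$, and to prove two things separately. First, the five listed fields are $2\pi$-periodic Jacobi fields, so they lie in $K$. Second, they are linearly independent, and $\dim K\le5$; the two together give $K=T_{u_{0}}M$ of dimension five. By the $O(3)$-equivariance argument in the kernel paragraph of Theorem~\ref{thm:cont}, we may take $u_{0}=R(\psi_{1},\psi_{2})u_{\mathrm{c}}$ with $u_{\mathrm{c}}(\tau)=(\cos\tau,\sin\tau,0)$ the planar unit circle. Since $c_{0}=1$, the rotation $R(\psi_{1},\psi_{2})$ carries everything over, so it suffices to work at $u_{\mathrm{c}}$.

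\emph{The upper bound.} The scaling field $\eta_{\mathrm{sc}}=2u_{\mathrm{c}}-3\tau\dot u_{\mathrm{c}}$ solves $\mathcal{L}_{0}\eta=0$ and has nonzero period increment $-6\pi\dot u_{\mathrm{c}}$. The argument of Proposition~\ref{thm:kepler} never used $e\neq0$ at this step, so the periodic solutions again form a subspace of dimension at most five.

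\emph{Membership in the kernel.} Each field is the derivative at $u_{0}$ of a smooth family of $2\pi$-periodic critical points of $\A_{0}$, hence a periodic Jacobi field. The curve $(\epsilon,\psi_{1},\psi_{2},\varphi)\mapsto u_{e,\psi}(\cdot+\varphi-\psi_{3})$ must be smooth at $\epsilon=0$, and I would show this by expanding $u_{e}$ to first order in $e$. From \eqref{eq:ellipse} and Kepler's equation one gets the standard expansion
\[
u_{e}(\tau)=(\cos\tau,\sin\tau,0)+e\bigl(\tfrac12\cos2\tau-\tfrac32,\ \tfrac12\sin2\tau,\ 0\bigr)+O(e^{2}).
\]
Rotating in-plane by $\psi_{3}$ and shifting time by $-\psi_{3}$ gives
\[
\partial_{\epsilon_{1}}u=\bigl(\tfrac12\cos2\tau-\tfrac32,\ \tfrac12\sin2\tau,\ 0\bigr),
\]
and $\partial_{\epsilon_{2}}u$ equals the same field after a quarter-turn rotation and a quarter-period shift. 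Explicitly,
\[
\partial_{\epsilon_{2}}u=\bigl(\tfrac12\sin2\tau,\ -\tfrac12\cos2\tau-\tfrac32,\ 0\bigr)
\]
up to sign conventions. The polar-to-Cartesian map only becomes smooth in $(\epsilon_{1},\epsilon_{2})$ because of this compensating phase, and checking that is where care is needed.

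\emph{Independence.} The fields $\partial_{\psi_{1}}u_{\mathrm{c}}$ and $\partial_{\psi_{2}}u_{\mathrm{c}}$ are out-of-plane, and at $R=I$ they equal $J_{1}u_{\mathrm{c}}=(0,0,\sin\tau)$ and $J_{2}u_{\mathrm{c}}=(0,0,\cos\tau)$. For general $(\psi_{1},\psi_{2})$, Remark~\ref{rem:immersion} says their axes are $e_{1}$ and $e^{\psi_{1}J_{1}}e_{2}$; these span the plane normal to $\nor$ exactly when $\cos\psi_{2}\neq0$, so the two normal derivatives are independent modulo in-plane rotation. The three in-plane fields are $\dot u_{\mathrm{c}}=(-\sin\tau,\cos\tau,0)$ and the two $\epsilon$-derivatives. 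These contain, respectively, only Fourier mode $1$ and modes $0$ and $2$, and the two $\epsilon$-fields have independent constant parts $(-\tfrac32,0)$ and $(0,-\tfrac32)$. Hence the three are independent, which shows the chart is an immersion.

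The main obstacle I expect is the smoothness claim for the eccentricity-vector chart at $\epsilon=0$, i.e.\ the first-order expansion above together with the fact that the phase $\varphi=-\psi_{3}$ makes it linear in $(\epsilon_{1},\epsilon_{2})$. Once that expansion is established, the rest is the same bookkeeping as in Proposition~\ref{thm:kepler}.
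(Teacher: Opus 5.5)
Your proposal is correct and takes essentially the paper's route. The non-periodic scaling field caps the periodic kernel at dimension five, the five chart derivatives are periodic Jacobi fields, and independence follows by splitting into in-plane and out-of-plane parts, with Remark~\ref{rem:immersion} handling the latter; your $\epsilon$-fields coincide with the paper's $-\cos\tau\,u_{0}+2\sin\tau\,\dot u_{0}$ and $-\sin\tau\,u_{0}-2\cos\tau\,\dot u_{0}$, and where the paper kills their coefficients via the pointwise $u_{0}$-component, you use the constant Fourier mode, which works equally well. One wording fix: the axes $e_{1}$ and $e^{\psi_{1}J_{1}}e_{2}$ do not themselves span the orbit plane; what holds is that their projections onto it do, i.e.\ $\det(e_{1},e^{\psi_{1}J_{1}}e_{2},\nor)=\cos\psi_{2}\neq0$.
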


\begin{proof}
Rotating, take the orbit plane to be $\{z=0\}$, so that
$u_{0}(\tau)=(\cos\tau,\sin\tau,0)$. Proposition~\ref{thm:kepler} does not cover
the circle, so the kernel is counted directly. Here
$\abs{u_{0}}=1$, so
$\mathcal{L}_{0}\eta=-\ddot\eta-\eta+3u_{0}\inner{u_{0}}{\eta}$, a second-order
linear ODE for a map with values in $\R^{3}$, whose solutions, periodic or not,
form a space of dimension six. The scaling generator
$\eta_{\mathrm{sc}}=2u_{0}-3\tau\,\dot u_{0}$ solves it, as in the proof of
Proposition~\ref{thm:kepler}, and it is not periodic, since
$\eta_{\mathrm{sc}}(\tau+2\pi)-\eta_{\mathrm{sc}}(\tau)=-6\pi\,\dot u_{0}\ne0$.
The periodic solutions are therefore a proper subspace, of dimension at most
five.

The five listed vectors are periodic solutions, being derivatives of families of
$2\pi$-periodic critical points of $\A_{0}$, and they are independent. \rev{In the plane} they are
$\dot u_{0}$ together with
\[
\partial_{\epsilon_{1}}u_{0}=-\cos\tau\,u_{0}+2\sin\tau\,\dot u_{0},\qquad
\partial_{\epsilon_{2}}u_{0}=-\sin\tau\,u_{0}-2\cos\tau\,\dot u_{0},
\]
the first of these by differentiating
\eqref{eq:ellipse} at $e=0$, where $c_{e}'=0$ gives $\partial_{e}r=-\cos\tau$ and
$\partial_{e}\theta=2\sin\tau$, and the second by rotating that answer through
$\pi/2$. At the circle $u_{0}$ and $\dot u_{0}$ are orthonormal at
every instant, so a relation among the three has vanishing $u_{0}$ component,
which forces the coefficients of the two eccentricity derivatives to vanish and
then that of $\dot u_{0}$. \rev{Out of the plane} they are $\partial_{\psi_{1}}u_{0}$ and
$\partial_{\psi_{2}}u_{0}$, whose out-of-plane components are independent
exactly when $\cos\psi_{2}\neq0$ by Remark~\ref{rem:immersion}. Five
independent periodic solutions in a space of dimension at most five are a basis of
it, so $\dim\ker\mathcal{L}_{0}=5$.
\end{proof}

\begin{definition}[The forms $\Phi$ and $\Psi$, and the working notation]\label{def:PsiPhi}
The change of variables \eqref{eq:eps} acts only on the shape pair, replacing
$(e,\psi_{3})$ by $(\epsilon_{1},\epsilon_{2})$ and leaving the normal angles
$(\psi_{1},\psi_{2})$ untouched. We name the reduced function $\Phi=\A_{1}|_{M}$ of
\eqref{eq:Phi2} in each chart, writing $\Phi$ in the polar variables and
$\Psi$ in the Cartesian ones, by
\begin{equation}\label{eq:PsiPhi}
\Psi(\epsilon_{1},\epsilon_{2},\psi_{1},\psi_{2})
:=\Phi(e,\psi_{1},\psi_{2},\psi_{3}),
\qquad
e=\sqrt{\epsilon_{1}^{2}+\epsilon_{2}^{2}},\quad
\psi_{3}=\arg(\epsilon_{1},\epsilon_{2}).
\end{equation}
They are one and the same function on $M$, written in two coordinate systems, and
only $\Psi$ is smooth at the circle.
\end{definition}
\emph{Working notation.} The normal angles $(\psi_{1},\psi_{2})$ are common to $\Phi$
and $\Psi$ and stay fixed throughout, since they label the orbit plane. We therefore
omit them from the notation of \emph{both} functions, writing $\Phi(e,\psi_{3})$ and
$\Psi(\epsilon_{1},\epsilon_{2})$, and we record where a derivative is evaluated by
listing the remaining arguments. Thus $\partial_{e}\Phi(0,\psi_{3})$ is
$\partial_{e}\Phi$ at eccentricity $e=0$ and in-plane angle $\psi_{3}$, and
$\nabla_{\epsilon}\Psi(0,0)$ denotes the Cartesian gradient of $\Psi$ in the
eccentricity variables at the circle.

\subsection{From the polar to the Cartesian chart}
\label{ss:circ-hessian}

Although the value $\Phi(0,\psi_{3})$ is independent of $\psi_{3}$, a circle
having no distinguished pericentre, its derivatives at $e=0$ need not be. The chain
rule for \eqref{eq:eps}, applied at fixed $\psi_{3}$ to
$\Phi(e,\psi_{3})=\Psi(e\cos\psi_{3},e\sin\psi_{3})$, gives
\begin{equation}\label{eq:dict}
\partial_{e}\Phi=\cos\psi_{3}\,\partial_{\epsilon_{1}}\Psi
+\sin\psi_{3}\,\partial_{\epsilon_{2}}\Psi,
\end{equation}
that is, the $e$-derivative of $\Phi$ is the derivative of $\Psi$ along the ray
that leaves the origin in the direction $(\cos\psi_{3},\sin\psi_{3})$. The
Cartesian derivatives of $\Psi$ are recovered from these radial ones alone, by
varying $\psi_{3}$, and no new integral appears. What we assemble in this way is
the gradient and the Hessian of $\Psi$ at the circle in the four bifurcation
variables $(\epsilon_{1},\epsilon_{2},\psi_{1},\psi_{2})$.

That ray is $(\epsilon_{1},\epsilon_{2})=e\,(\cos\psi_{3},\sin\psi_{3})$, on which
$e$ is the arclength and the direction $(\cos\psi_{3},\sin\psi_{3})$ is constant.
Differentiating $\Psi$ along it once and twice by \eqref{eq:dict}, and evaluating
at the circle $e=0$ where all the rays meet, we obtain the three identities
\begin{align}
\partial_{e}\Phi(0,\psi_{3})
&=\cos\psi_{3}\,\partial_{\epsilon_{1}}\Psi(0,0)
+\sin\psi_{3}\,\partial_{\epsilon_{2}}\Psi(0,0),\label{eq:dir1}\\
\partial_{e}^{2}\Phi(0,\psi_{3})
&=\cos^{2}\psi_{3}\,\partial_{\epsilon_{1}}^{2}\Psi(0,0)
+2\cos\psi_{3}\sin\psi_{3}\,\partial_{\epsilon_{1}}\partial_{\epsilon_{2}}\Psi(0,0)
+\sin^{2}\psi_{3}\,\partial_{\epsilon_{2}}^{2}\Psi(0,0),\label{eq:dir2}\\
\partial_{e}\partial_{\psi_{j}}\Phi(0,\psi_{3})
&=\cos\psi_{3}\,\partial_{\epsilon_{1}}\partial_{\psi_{j}}\Psi(0,0)
+\sin\psi_{3}\,\partial_{\epsilon_{2}}\partial_{\psi_{j}}\Psi(0,0),
\qquad j=1,2.\label{eq:dir3}
\end{align}
The left-hand sides are the $e$-derivatives of $\Phi$ from
Section~\ref{sec:reduced}, with integrands \eqref{eq:dW}, \eqref{eq:d2W} and
\eqref{eq:dpsi}, evaluated at eccentricity $e=0$ and in-plane angle $\psi_{3}$, on
which they genuinely depend. The right-hand sides are the Cartesian quantities we
are after, namely the gradient entries $\partial_{\epsilon_{i}}\Psi(0,0)$, the
eccentricity Hessian entries
$\partial_{\epsilon_{i}}\partial_{\epsilon_{k}}\Psi(0,0)$, and the mixed entries
$\partial_{\epsilon_{i}}\partial_{\psi_{j}}\Psi(0,0)$.

The right-hand sides are linear in $(\cos\psi_{3},\sin\psi_{3})$ in
\eqref{eq:dir1} and \eqref{eq:dir3}, and quadratic in it in \eqref{eq:dir2}.
Evaluating the three identities at $\psi_{3}=0,\tfrac\pi2,\tfrac\pi4$, where
$(\cos\psi_{3},\sin\psi_{3})$ equals $(1,0)$, $(0,1)$ and
$\tfrac1{\sqrt2}(1,1)$, inverts them entry by entry. The \emph{gradient} entries follow from \eqref{eq:dir1} at
$\psi_{3}=0,\tfrac\pi2$,
\begin{equation}\label{eq:grad-recon}
\partial_{\epsilon_{1}}\Psi(0,0)=\partial_{e}\Phi(0,0),
\qquad
\partial_{\epsilon_{2}}\Psi(0,0)=\partial_{e}\Phi(0,\tfrac\pi2).
\end{equation}
For the \emph{eccentricity Hessian}, the quadratic form \eqref{eq:dir2} rewrites,
by the double-angle identities, as
\[
\partial_{e}^{2}\Phi(0,\psi_{3})
=\frac{\partial_{\epsilon_{1}}^{2}\Psi+\partial_{\epsilon_{2}}^{2}\Psi}{2}
+\frac{\partial_{\epsilon_{1}}^{2}\Psi-\partial_{\epsilon_{2}}^{2}\Psi}{2}\cos2\psi_{3}
+\partial_{\epsilon_{1}}\partial_{\epsilon_{2}}\Psi\,\sin2\psi_{3}
\qquad(\text{all at }(0,0)),
\]
so the three values of $\psi_{3}$ give
\begin{equation}\label{eq:hess-recon}
\partial_{\epsilon_{1}}^{2}\Psi(0,0)=\partial_{e}^{2}\Phi(0,0),\quad
\partial_{\epsilon_{2}}^{2}\Psi(0,0)=\partial_{e}^{2}\Phi(0,\tfrac\pi2),\quad
\partial_{\epsilon_{1}}\partial_{\epsilon_{2}}\Psi(0,0)
=\partial_{e}^{2}\Phi(0,\tfrac\pi4)
-\tfrac12\bigl(\partial_{\epsilon_{1}}^{2}\Psi+\partial_{\epsilon_{2}}^{2}\Psi\bigr).
\end{equation}
The \emph{mixed entries} between the eccentricity and the two normal angles follow
from \eqref{eq:dir3} at $\psi_{3}=0,\tfrac\pi2$, for $j=1,2$,
\begin{equation}\label{eq:mix-recon}
\partial_{\epsilon_{1}}\partial_{\psi_{j}}\Psi(0,0)
=\partial_{e}\partial_{\psi_{j}}\Phi(0,0),
\qquad
\partial_{\epsilon_{2}}\partial_{\psi_{j}}\Psi(0,0)
=\partial_{e}\partial_{\psi_{j}}\Phi(0,\tfrac\pi2).
\end{equation}
Finally the \emph{orientation entries} need no translation, because
$\psi_{1},\psi_{2}$ are coordinates in both charts, so
$\partial_{\psi_{k}}\partial_{\psi_{j}}\Psi=\partial_{\psi_{k}}\partial_{\psi_{j}}
\Phi$ ($j,k=1,2$), read directly from \eqref{eq:hpsi} at $e=0$ with the weight
$W_{e}(\theta)\big|_{e=0}=\tfrac12$.

Writing $x=(x_{1},x_{2},x_{3},x_{4})=(\epsilon_{1},\epsilon_{2},\psi_{1},\psi_{2})$
for the four regular variables, the Hessian of $\Psi$ at the circle is
\begin{equation}\label{eq:H4}
\nabla^{2}\Psi(0,0)=
\Bigl(\partial_{x_{i}}\partial_{x_{j}}\Psi(0,0)\Bigr)_{i,j=1,\dots,4} .
\end{equation}
Its entries with $i,j\le2$ are given by \eqref{eq:hess-recon}, those with $i\le2<j$
or $j\le2<i$ by \eqref{eq:mix-recon}, and those with $i,j\ge3$ by \eqref{eq:hpsi}
at $e=0$. Every entry is thus one of the Section~\ref{sec:reduced} integrals, taken
at $e=0$ and evaluated at $\psi_{3}\in\{0,\tfrac\pi2,\tfrac\pi4\}$, and the
verification of Section~\ref{sec:cap} evaluates \eqref{eq:H4} as a whole.

\paragraph{The eccentricity gradient vanishes at every orientation.}
Both entries of \eqref{eq:grad-recon} are values of $\partial_{e}\Phi(0,\psi_{3})$,
which we now compute. Putting $e=0$, $c_{0}=1$ and $c_{0}'=0$ into \eqref{eq:W}
and \eqref{eq:dW},
\begin{equation}\label{eq:Wzero}
W_{e}(\theta)\big|_{e=0}=\tfrac12,\qquad
\partial_{e}W_{e}(\theta)\big|_{e=0}=-\tfrac12\cos\theta ,
\end{equation}
and since $S(\hat u)$ does not depend on $e$, the $e$-derivative of
\eqref{eq:Phi2} falls entirely on the weight,
\begin{equation}\label{eq:dePhi0}
\partial_{e}\Phi(0,\psi_{3})
=\int_{0}^{2\pi}\partial_{e}W_{e}\big|_{e=0}\,S(\hat u(\theta))\,d\theta
=-\frac12\int_{0}^{2\pi}\cos\theta\;S(\hat u(\theta))\,d\theta .
\end{equation}
Shifting $\theta$ by $\pi$ replaces $\hat u$ by $-\hat u$, which leaves $S$
unchanged since it is even, and replaces $\cos\theta$ by $-\cos\theta$. The
integral is then equal to its own negative, hence zero, and with it
$\partial_{e}\Phi(0,\psi_{3})$ at every in-plane angle. By \eqref{eq:grad-recon},
\emph{with no symmetry hypothesis whatsoever},
\begin{equation}\label{eq:gradeps}
\nabla_{\epsilon}\Psi(0,0)
=\bigl(\partial_{\epsilon_{1}}\Psi,\partial_{\epsilon_{2}}\Psi\bigr)(0,0)=0 .
\end{equation}
Every circular orbit is therefore critical in the eccentricity directions,
whatever its orientation, and only the two orientation equations remain.

\begin{remark}\label{rem:block}
Since \eqref{eq:gradeps} holds at every orientation, differentiating it in
$\psi_{j}$ gives $\partial_{\epsilon_{i}}\partial_{\psi_{j}}\Psi(0,0)=0$, so the
mixed entries \eqref{eq:mix-recon} vanish and the Hessian \eqref{eq:H4} is block
diagonal at the circle. Nothing below uses this, the verification of
Section~\ref{sec:cap} building \eqref{eq:H4} as a whole.
\end{remark}

\subsection{The continuation at the circle}
\label{ss:circ-cont}

With $M$ a nondegenerate critical manifold at the circle, a chart that is
regular there, and the Hessian \eqref{eq:H4} written explicitly in that chart, the
continuation of Section~\ref{sec:cont} remains valid, and the only input it still
requires is a nondegenerate critical point of $\Psi$. By \eqref{eq:gradeps} the two
eccentricity components of the gradient of $\Psi$ vanish at every orientation, so
that only the two orientation equations remain, and we state the hypothesis in
that form.

\begin{hypothesis}\label{hyp:circular}
The orientation $\psi_{0}$ has $\cos\psi_{0,2}\neq0$ and a great
circle avoiding the rotation axes of $H$. Moreover the
circle $(\epsilon_{1},\epsilon_{2})=(0,0)$ of that orientation is a nondegenerate
critical point of $\Psi$, that is,
$\partial_{\psi_{1}}\Psi=\partial_{\psi_{2}}\Psi=0$ there, which together with
\eqref{eq:gradeps} makes $\nabla\Psi=0$ in all four variables
$(\epsilon_{1},\epsilon_{2},\psi_{1},\psi_{2})$, and the $4\times4$ Hessian
\eqref{eq:H4} in those coordinates is invertible.
\end{hypothesis}

As with Hypothesis~\ref{hyp:reduced}, this is a property of a
single orientation, and it is verified at each of the orientations listed in
Table~\ref{tab:cap-e0} by Theorem~\ref{thm:cap-circular}.

\begin{theorem}\label{thm:circular}
Under Hypothesis~\ref{hyp:circular}, the conclusion of Theorem~\ref{thm:cont}
holds at $e_{0}=0$. For all small $\varepsilon>0$ there is a $C^{1}$ branch of
symmetric, $2\pi\varepsilon^{1/2}$-periodic solutions bifurcating, as
$\varepsilon\to0$, from the circular orbit of orientation $\psi_{0}$,
again unique up to time translation, so that the solutions come in circles,
one orbit of that action. The
expansion \eqref{eq:expansion} holds verbatim, now in the eccentricity-vector
chart,
\[
u_{\varepsilon}=u_{\epsilon_{\varepsilon},\psi_{\varepsilon}}
+O(\varepsilon)\quad\text{in }X,
\qquad
(\epsilon_{\varepsilon},\psi_{\varepsilon})=(0,0,\psi_{0})+O(\varepsilon).
\]
\end{theorem}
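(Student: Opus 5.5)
The plan is to rerun the four steps of the proof of Theorem~\ref{thm:cont} verbatim, now in the chart $x=(\epsilon_{1},\epsilon_{2},\psi_{1},\psi_{2})$ with the phase $\varphi=-\psi_{3}$ built into the curve as in Section~\ref{ss:circ-chart}. Write $u_{x}$ for that curve. The first thing to check is that $x\mapsto u_{x}$ is a smooth map into $X$ near $x_{0}=(0,0,\psi_{0,1},\psi_{0,2})$. Away from the origin this is clear. At the origin I would argue through the eccentric anomaly: the Kepler ellipse with eccentricity vector $\epsilon$ and phase reset by $-\psi_{3}$ is a real-analytic function of $(\epsilon_{1},\epsilon_{2},\tau)$. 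In the plane it is the vector $\epsilon$ entering through $\cos E-e$, $\sqrt{1-e^{2}}\sin E$, rotated by $\psi_{3}$, with $E$ solving $E-e\sin E=\tau-\varphi$ after the shift. Expressed in the rotated variable $E+\psi_{3}$, only $\epsilon_{1},\epsilon_{2}$ appear polynomially. Hence the map is $C^{\infty}$ into $\Hp$.

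\emph{Step 1} is then unchanged. By Proposition~\ref{prop:circ}, $K=\ker\mathcal{L}_{0}$ is five-dimensional. $\mathcal{L}_{0}$ is self-adjoint Fredholm of index zero with range $K^{\perp}$, and $Q\nabla\A_{0}(u_{x})=0$ for all $x$. The implicit function theorem therefore gives $w(x,\varepsilon)\in W=K^{\perp}\cap X$ with $w=O(\varepsilon)$, together with its derivatives. The circle is collision-free by the hypothesis on the great circle. \emph{Step 2} repeats: with $\Psi_{\varepsilon}(x)=\A_{H}(u_{x}+w)$, we get $\partial_{x_{i}}\Psi_{\varepsilon}=\inner{\nabla\A_{H}(u)}{\partial_{x_{i}}u_{x}}$, while the phase direction $\dot u$ gives no equation by time-translation invariance. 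The one point that failed before is the spanning requirement. Here Proposition~\ref{prop:circ} states precisely that $\partial_{\epsilon_{1}}u_{0},\partial_{\epsilon_{2}}u_{0},\partial_{\psi_{1}}u_{0},\partial_{\psi_{2}}u_{0},\dot u_{0}$ form a basis of $K$, using $\cos\psi_{0,2}\ne0$. By openness this persists nearby, so critical points of $\Psi_{\varepsilon}$ give critical points of $\A_{H}$.

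\emph{Step 3}: $\A_{0}(u_{x})=c_{0}$ is constant, so $\Psi_{\varepsilon}=c_{0}+\varepsilon\Psi+\varepsilon^{2}G$ with $G$ being $C^{2}$. Here $\Psi(x)=\A_{1}(u_{x})$, and this agrees with Definition~\ref{def:PsiPhi} because $\A_{1}$ is phase-invariant, so the phase convention is immaterial. Hypothesis~\ref{hyp:circular} together with \eqref{eq:gradeps} gives $\nabla\Psi(x_{0})=0$ with invertible Hessian \eqref{eq:H4}. The implicit function theorem applied to $\nabla\Psi+\varepsilon\nabla G$ then yields the $C^{1}$ branch $x_{\varepsilon}=x_{0}+O(\varepsilon)$, and hence $u_{\varepsilon}=u_{x_{\varepsilon}}+w$ together with the stated expansion. \emph{Step 4}, uniqueness up to time translation, is identical: $(x,w,\varphi)$ coordinatize a neighbourhood of $u_{0}$ because the five directions span $K$ and $W$ complements it.

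The main obstacle I expect is the smoothness of the chart at the origin in the first paragraph. The polar formulas of Section~\ref{sec:reduced} are not smooth there, so the $C^{2}$ regularity of $\Psi$ and of the remainder $G$ must come from the eccentricity-vector parametrization itself. I would settle it by the eccentric-anomaly representation above, checking that its first derivatives at $\epsilon=0$ reproduce the fields $\partial_{\epsilon_{i}}u_{0}$ of Proposition~\ref{prop:circ}. Everything else is a transcription of the proof of Theorem~\ref{thm:cont}.
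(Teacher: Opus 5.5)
Your proposal is correct and follows the paper's proof. You rerun Theorem~\ref{thm:cont} in the chart $(\epsilon_{1},\epsilon_{2},\psi_{1},\psi_{2})$, with Proposition~\ref{prop:circ} supplying the basis of $K$ for Steps 2 and 4, and Hypothesis~\ref{hyp:circular} together with \eqref{eq:gradeps} supplying the collision-free nondegenerate critical point for Steps 1 and 3.

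Your eccentric-anomaly check that $x\mapsto u_{x}$ is smooth into $X$ at the circle is a point the paper only asserts, and it does go through. The dependence is analytic, not polynomial. With $F=E+\psi_{3}$, Kepler's equation reads $F-\epsilon_{1}\sin F+\epsilon_{2}\cos F=\tau$. The in-plane position is $\tfrac12(1+\sqrt{1-e^{2}})e^{iF}+\tfrac{(\epsilon_{1}+i\epsilon_{2})^{2}}{2(1+\sqrt{1-e^{2}})}e^{-iF}-(\epsilon_{1}+i\epsilon_{2})$, whose $\epsilon$-derivatives at $0$ are exactly the fields $\partial_{\epsilon_{i}}u_{0}$ of Proposition~\ref{prop:circ}.
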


\begin{proof}
The proof of Theorem~\ref{thm:cont} is run at the base point $u_{0}$, the
circular orbit of orientation $\psi_{0}$, with the four parameters
$(\epsilon_{1},\epsilon_{2},\psi_{1},\psi_{2})$ of the eccentricity-vector chart
\eqref{eq:eps} in place of $(e,\psi)$ and with $\Psi$ in place of $\Phi$. Two
things have to be checked, and nothing else.

Steps 2 and 4 need the four parameter derivatives
$\partial_{\epsilon_{1}}u_{0}$, $\partial_{\epsilon_{2}}u_{0}$,
$\partial_{\psi_{1}}u_{0}$, $\partial_{\psi_{2}}u_{0}$ together with $\dot u_{0}$
to be a basis of $K$. This is
Proposition~\ref{prop:circ}, whose hypothesis $\cos\psi_{0,2}\neq0$ is part of
Hypothesis~\ref{hyp:circular}. It is what fails in the Euler chart, where
$\partial_{\psi_{3}}u_{0}=\dot u_{0}$ leaves only four independent directions, and
it is what the eccentricity-vector chart repairs. That failure is
one of coordinates, and is not the phase degeneracy of Step 2, which is a symmetry
of $\A_{H}$ present at every eccentricity and left untouched here.

Steps 1 and 3 need the circle
$(\epsilon_{1},\epsilon_{2})=(0,0)$ of orientation $\psi_{0}$ to be a nondegenerate
critical point of $\Psi$ with collision-free $H$-orbit. The two
eccentricity components of $\nabla\Psi$ vanish there, and at every orientation, by
\eqref{eq:gradeps}, the two orientation components vanish and the $4\times4$
Hessian \eqref{eq:H4} is invertible by Hypothesis~\ref{hyp:circular}, and the
great circle of $\psi_{0}$ avoids the rotation axes of $H$ by the same hypothesis.

The conclusion, the expansion and the uniqueness up to time translation are then
those of Theorem~\ref{thm:cont}.
\end{proof}

\section{Computer-assisted verification of the critical points}\label{sec:cap}

In this section we outline the methods used to verify Hypotheses~\ref{hyp:reduced} and~\ref{hyp:circular}. Both require rigorously solving a zero-finding problem, together with an invertibility verification. Using computer-assisted proofs to solve finite-dimensional 
 zero-finding problems is now standard material, see \cite{MR1420838,MR2807595,MR3444942,MR3971222,MR3990999, CoGaLe2021} for a few examples. In such cases the only analysis required to construct a proof is computing the derivative of the zero-finding problem, which has already been done in Sections~\ref{sec:reduced} and~\ref{sec:circular}.


\subsection{The Newton--Kantorovich theorem and rigorous integration}
\label{ss:cap-prelim}

We collect here the two ingredients on which the verifications of
Section~\ref{ss:cap-results} rest. The first is a finite-dimensional Newton–Kantorovich theorem, which turns a good numerical approximation of a zero into a proof that a true zero lies nearby. The second is a rigorous bound on the integrals that define our maps, without which the hypotheses of that theorem cannot be checked. We begin with the Newton–Kantorovich argument. \rev{The statement below is standard. The use of an injective operator $\cA$ together with the contraction bound on $I-\cA\,D\cF$ goes back to Krawczyk \cite{Kra69} and to the existence test of Moore \cite{Moo77}; the form given here, with an approximate zero $\bar x$ and explicit bounds $Y$ and $Z$, is that of Yamamoto \cite{Yam98}, and the survey \cite{Rum10} places it in its general framework. Packaging $Y$ and $Z$ into the radii polynomial \eqref{EQ: radii_polynomial} follows \cite{DLM07}.}

\begin{theorem}[\rev{see \cite{Kra69,Moo77,Yam98,Rum10}}]
\label{thm:Newton-Kantorovich type theorem}
    Let $\cN >0$, \rev{let $\bar{x} \in \R^\cN$ and $r_*>0$, and let $\cF:\R^\cN \to \R^\cN$ be continuously differentiable on $\overline{B_{r_*}(\bar{x})}$}. Let $\cA \in B(\R^\cN)$ be an invertible matrix. Let \rev{$Y$, $Z$} be nonnegative constants satisfying
    \begin{equation} \label{EQ: Y}
        \norm{\cA\cF(\bar{x})}_\infty \le Y,
    \end{equation}
    \begin{equation} \label{EQ: Z}
        \norm{I - \cA D \cF(c)}_\infty \le Z,\quad \textit{for all }c \in \overline{B_{r_*}(\bar{x})}.
    \end{equation}
    Define the radii polynomial by
    \begin{equation} \label{EQ: radii_polynomial}
        P(r) = Y - (1 - Z)r.
    \end{equation}
    If there exists $0 < r_0 \le r_*$ such that $P(r_0) < 0$, then there exists a unique $\tilde{x} \in \overline{B_{r_0}(\bar{x})}$
    such that $\mathcal{F}(\tilde{x}) = 0$.
\end{theorem}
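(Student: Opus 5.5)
We prove the statement by the Banach fixed point theorem applied to the Newton-like operator
\[
T(x):=x-\cA\cF(x),
\]
on the closed ball $\overline{B_{r_0}(\bar x)}$ in the $\ell^\infty$ norm. Since $\cA$ is invertible, the fixed points of $T$ are exactly the zeros of $\cF$. It therefore suffices to show that $T$ maps $\overline{B_{r_0}(\bar x)}$ into itself and is a contraction there. The ball is a complete metric space, so this gives existence and uniqueness of $\tilde x$.

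\emph{Step 1: contraction.} The map $T$ is $C^{1}$ on $\overline{B_{r_*}(\bar x)}$ with $DT(c)=I-\cA D\cF(c)$. Let $x,y\in\overline{B_{r_0}(\bar x)}\subseteq\overline{B_{r_*}(\bar x)}$. The ball is convex, so the segment joining them stays inside it. The mean value inequality in integral form,
\[
T(x)-T(y)=\int_0^1 DT\bigl(y+s(x-y)\bigr)(x-y)\,ds,
\]
together with \eqref{EQ: Z} gives $\norm{T(x)-T(y)}_\infty\le Z\norm{x-y}_\infty$. The hypothesis $P(r_0)<0$ reads $Y<(1-Z)r_0$. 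Because $Y\ge0$ and $r_0>0$, this forces $Z<1$, so $T$ is a contraction.

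\emph{Step 2: self-map.} For $x\in\overline{B_{r_0}(\bar x)}$ we split
\[
\norm{T(x)-\bar x}_\infty\le\norm{T(x)-T(\bar x)}_\infty+\norm{T(\bar x)-\bar x}_\infty .
\]
The first term is at most $Zr_0$ by Step 1. The second term is $\norm{\cA\cF(\bar x)}_\infty\le Y$ by \eqref{EQ: Y}. Hence
\[
\norm{T(x)-\bar x}_\infty\le Y+Zr_0<r_0,
\]
again using $P(r_0)<0$.

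\emph{Step 3: conclusion.} Banach's theorem gives a unique fixed point $\tilde x\in\overline{B_{r_0}(\bar x)}$. Invertibility of $\cA$ turns $\cA\cF(\tilde x)=0$ into $\cF(\tilde x)=0$. Conversely, any zero of $\cF$ in the ball is a fixed point of $T$, so uniqueness transfers to zeros of $\cF$.

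The only delicate point is Step 1. One has to use the induced matrix norm $\norm{\cdot}_\infty$ consistently with the vector norm. The mean value estimate also needs the bound \eqref{EQ: Z} on the whole segment, which convexity of the ball supplies. As a by-product, the invertibility of $I-(I-\cA D\cF(\tilde x))=\cA D\cF(\tilde x)$, which follows from $Z<1$, shows that $D\cF(\tilde x)$ is invertible. This is the nondegeneracy used later in the paper.
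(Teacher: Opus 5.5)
Your proof is correct and is the standard argument. The paper gives no proof of its own and refers to \cite{MR4879804}, where the result is proved this same way: one shows that $T(x)=x-\cA\cF(x)$ is a contraction with constant $Z<1$ mapping $\overline{B_{r_0}(\bar x)}$ into itself because $Y+Zr_0<r_0$, and invertibility of $\cA$ turns its fixed point into a zero of $\cF$. Your closing observation, that $Z<1$ forces $D\cF(\tilde x)$ to be invertible, is the same fact the paper invokes in the remark after Table~\ref{tab:cap-ecc} to skip a separate nondegeneracy check.
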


\rev{The proof of Theorem~\ref{thm:Newton-Kantorovich type theorem} can be found in \cite{MR4879804}.}


The challenge is that our maps contain integrals, which are generically impossible to evaluate exactly. For the remainder of this section, we let $f:\R \to \R$ be a smooth $2\pi$-periodic function, which is the case for the integrands in this paper, and we present the method we settled on for evaluating
\[
\frac{1}{2\pi}\int_{0}^{2\pi} f(\theta) d\theta.
\]
The naive numerical method would be to split $[0,2\pi]$ into many subintervals and use interval arithmetic to bound $f$ on each one. In practice, this does not yield tight bounds. The key realization is that the above integral is by definition the average of $f$ and its zero-th Fourier coefficient, $a_0$. Indeed, since $f$ is smooth, it admits a Fourier series representation
\[
f(\theta) = \sum_{n \in \mathbb{Z}} a_n e^{i n \theta},
\]
where
\[
a_n = \frac{1}{2\pi} \int_{0}^{2\pi} f(\theta) e^{- i n \theta} d\theta, \quad \forall n \in \mathbb{Z}.
\]
The goal now becomes rigorously computing $a_0$. To begin, we fix $N$ to be a large power of 2 and consider sample points
\[
\theta_k = \frac{2k\pi}{N}, \quad k = -\frac{N}{2}, \dots, \frac{N}{2}-1.
\]
Then we sample $f$ at these points and apply the Discrete Fourier Transform (DFT) to obtain approximate Fourier coefficients $(\bar{a}_n)_{n \in \mathbb{Z}}$.
\[
\bar{a}_n = \frac{1}{N} \sum_{k = -\frac{N}{2}}^{\frac{N}{2}-1} f(\theta_k) e^{-in\theta_k}
\]
Since we only care about $\bar{a}_0$, this is simply taking the average of our sampled values. Then, the discrete Poisson summation formula (found in \rev{Chapter~6 of \cite{Dft_book}, see also \cite{MR4930548} for an application in the context of a computer-assisted proof}) gives
\[
\rev{
\bar{a}_n =  a_n } +  \sum_{j = 1}^{\infty} (a_{n+jN} + a_{n-jN}), \quad n = -\frac{N}{2}, \dots, \frac{N}{2}-1.
\]
Once again, we will only use the $n=0$ case to bound $| a_0 - \bar{a}_0|$. The last ingredient needed is a rigorous bound on $|a_n|$ that holds for all $n$. For this we require an additional assumption on $f$. When treated as a complex function, it must be analytic on a region $\Omega \subset \mathbb{C}$ of the form
\[
\Omega = \left\{z \in \mathbb{C} : \text{Re}(z) \in [0, 2\pi], \text{Im}(z) \in [-\rho, \rho] \right\}
\]
for some $\rho > 0$. By the periodic nature of $f$, this analyticity extends to the entire strip of the complex plane. The contour integral of \rev{$f(z)e^{-inz}$ around any closed contour contained in $\Omega$} is therefore $0$. For the case $n \ge 0$, we consider the subset of $\Omega$ which has non-positive imaginary part. Let $\Gamma = \Gamma_1 \cup \Gamma_2 \cup \Gamma_3 \cup \Gamma_4$ be the contour around this region in the clockwise direction, where $\Gamma_1$ lies on the real line. So then
\[
0 = \int_{\Gamma} f(z)e^{-inz} dz = \int_{\Gamma_1} f(z)e^{-inz} dz + \int_{\Gamma_2} f(z)e^{-inz} dz + \int_{\Gamma_3} f(z)e^{-inz} dz +\int_{\Gamma_4} f(z)e^{-inz} dz.
\]
By $2\pi$-periodicity of $f$, we have \rev{that} the two middle terms cancel and we are left with
\[
\int_{0}^{2\pi} f(\theta)e^{-in\theta} d\theta = - \int_{2\pi}^{0} f(\theta - i\rho)e^{-in(\theta - i\rho)} d\theta = e^{-n\rho}\int_{0}^{2\pi} f(\theta \rev{-} i\rho)e^{-in\theta} d\theta.
\]
Therefore
\[
|a_n| = \frac{1}{2\pi}\left| \int_{0}^{2\pi} f(\theta)e^{-in\theta} d\theta \right|
= \frac{e^{-n\rho}}{2\pi} \left|\int_{0}^{2\pi} f(\theta \rev{-} i\rho)e^{-in\theta} d\theta \right|
\leq \frac{e^{-n\rho}}{2\pi} \int_{0}^{2\pi} |f(\theta \rev{-} i\rho)| d\theta
\leq \nu^{-n} C_\rho,
\]
where $C_\rho$ is an upper bound on the integral expression and $\nu = e^\rho$. In practice we choose a very coarse and easy-to-compute upper bound $C_\rho$, for reasons that will be apparent below. For the case $n < 0$, one can obtain a similar expression by analysing the contour integral around the other half region of $\Omega$\rev{, giving $|a_{n}|\le\nu^{-\abs{n}}C_{\rho}$ for every $n\in\Z$}. Recalling the discrete Poisson summation formula above, we now have
\[
|a_0 - \bar{a}_0| = \left|\sum_{j = 1}^{\infty} (a_{jN} + a_{-jN}) \right|
\leq 2C_\rho \sum_{j = 1}^{\infty} \left(\nu^{-N}\right)^j
= 2C_\rho \frac{\nu^{-N}}{1 - \nu^{-N}}
= \frac{2 C_\rho}{\nu^N-1}.
\]
When $N$ is large, this error is small even with a poor upper bound $C_\rho$.


\subsection{Enclosures of the critical points, and the solutions they produce}
\label{ss:cap-results}

We now apply the scheme of Section~\ref{ss:cap-prelim} to the reduced function
itself. The outcome is recorded in the form in which the rest of the paper uses
it, as a verification of Hypotheses~\ref{hyp:reduced}
and~\ref{hyp:circular} at explicit points. There are two statements rather
than one because the two hypotheses are posed on different charts of the Kepler
manifold, and the unknown is not the same in the two cases.

\begin{enumerate}
\item[$\bullet$] Away from the circle, $e\neq0$, the chart is the Euler chart
      \eqref{eq:M} and the unknown is the quadruple
      $p=(e,\psi_{1},\psi_{2},\psi_{3})$. The map to be solved is
      $F=2\nabla\Phi:\R^{4}\to\R^{4}$, whose four components
      $\partial_{e}\Phi$, $\partial_{\psi_{1}}\Phi$, $\partial_{\psi_{2}}\Phi$,
      $\partial_{\psi_{3}}\Phi$ are the integrals of Section~\ref{sec:reduced},
      and its derivative $DF=2\nabla^{2}\Phi$ is the $4\times4$ matrix assembled
      from \eqref{eq:dW}, \eqref{eq:d2W}, \eqref{eq:dpsi}, and \eqref{eq:hpsi}.
\item[$\bullet$] At the circle, $e=0$, the chart is the eccentricity-vector chart
      \eqref{eq:eps} and the two eccentricity equations are already satisfied,
      identically and for every orientation, by \eqref{eq:gradeps}. The unknown is
      therefore only the pair of normal angles $p=(\psi_{1},\psi_{2})$, and the map
      to be solved is $F=2(\partial_{\psi_{1}}\Psi,\partial_{\psi_{2}}\Psi):
      \R^{2}\to\R^{2}$, evaluated at $(\epsilon_{1},\epsilon_{2})=(0,0)$. The
      nondegeneracy required by Hypothesis~\ref{hyp:circular} is that of the full
      $4\times4$ Hessian \eqref{eq:H4}, which the two-dimensional zero-finding
      problem does not see and which is therefore verified separately.
\end{enumerate}

In both cases the procedure is the same. We first compute a numerical
approximation $\bar p$ of the critical point. We then evaluate $F(\bar p)$ and
$DF(c)$ for $c$ in a ball about $\bar p$ in interval arithmetic, the
$\theta$-integrals by the Fourier method of Section~\ref{ss:cap-prelim} and the
group sum as the finite sum over the $\abs H-1$ elements of $H\setminus\{I\}$,
enumerated as the closure of the two generators $A,B$ of
Section~\ref{sec:groups}. From these we obtain the constants $Y$ and $Z$ of
\eqref{EQ: Y} and \eqref{EQ: Z}, and a radius $r_{0}$ at which the radii
polynomial \eqref{EQ: radii_polynomial} is negative, so that by
Theorem~\ref{thm:Newton-Kantorovich type theorem} a unique zero of $F$ lies
within $r_{0}$ of $\bar p$. Nondegeneracy is obtained on the same enclosure, by
bounding the determinant of the Hessian away from zero on
$\overline{B_{r_{0}}(\bar p)}$, so that it holds at the enclosed zero in
particular. The approximations $\bar p$ are those produced by the two Julia
notebooks \texttt{critical\_points\_simple.ipynb} (the circular case) and
\texttt{critical\_points\_simple5.ipynb} (the case $e\neq0$). Both are
self-contained, requiring only the standard library modules
\texttt{LinearAlgebra} and \texttt{Printf}, and both generate the group as the
closure of two axis-angle rotations, verified element by element against the
matrices $A,B$ of Section~\ref{sec:groups}, so that all coordinates below are in
the frame of this paper. The code, the validated bounds, and the data are
available at \cite{Constantineau_CAP_Code}. All angles are
in radians, reduced to $[0,2\pi)$.

\begin{theorem}[Circular critical points, $e_{0}=0$]
\label{thm:cap-circular}
Let the group $H$, the approximate orientation $\bar p=(\bar\psi_{1},\bar\psi_{2})$
and the radius $r_{0}$ be \rev{given by} one of the seven rows of Table~\ref{tab:cap-e0}, and let
$n=\abs H$. Then the map
$F=(\partial_{\psi_{1}}\Psi,\partial_{\psi_{2}}\Psi)$, evaluated at the circle
$(\epsilon_{1},\epsilon_{2})=(0,0)$, has a unique zero $\psi_{0}$ with
$\norm{\psi_{0}-\bar p}_\infty \le r_{0}$, the great circle of $\psi_{0}$ avoids the
rotation axes of $H$, \rev{$\cos\psi_{0,2}\neq0$,} and the $4\times4$ Hessian $\nabla^{2}\Psi$ of
\eqref{eq:H4} is invertible there. That is, Hypothesis~\ref{hyp:circular} holds
at $\psi_{0}$.

Consequently, by Theorem~\ref{thm:circular}, there is $\varepsilon_{0}>0$ such
that for every $0<\varepsilon<\varepsilon_{0}$ the $(n+1)$-body problem with
central mass $\varepsilon^{-1}$ has a $2\pi\varepsilon^{1/2}$-periodic solution of
the form \eqref{eq:solform}, with the expansion
\[
u_{\varepsilon}=u_{\epsilon_{\varepsilon},\psi_{\varepsilon}}+O(\varepsilon),
\qquad
(\epsilon_{\varepsilon},\psi_{\varepsilon})=(0,0,\psi_{0})+O(\varepsilon),
\]
about the circular Kepler orbit of orientation $\psi_{0}$.
\end{theorem}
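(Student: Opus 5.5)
The statement is a computer-assisted verification followed by an appeal to Theorem~\ref{thm:circular}, so the plan has two parts: a rigorous zero-finding problem in two variables, and three side conditions checked on the same enclosure. For each row of Table~\ref{tab:cap-e0} we take $F(\psi_{1},\psi_{2})=2(\partial_{\psi_{1}}\Psi,\partial_{\psi_{2}}\Psi)(0,0)$. By Section~\ref{ss:circ-hessian} these are the integrals \eqref{eq:dpsi} weighted by $W_{e}|_{e=0}=\tfrac12$, so $F(\psi)=\int_{0}^{2\pi}\partial_{\psi_{j}}S(\hat u(\theta))\,d\theta$. Its derivative $DF$ is the $2\times2$ orientation block given by \eqref{eq:hpsi}.

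\emph{Step 1: existence and uniqueness.} We choose $\cA$ as a floating-point inverse of $DF(\bar p)$ and compute $Y$ with $\norm{\cA F(\bar p)}_\infty\le Y$. For $Z$ we evaluate $DF$ in interval arithmetic on the box $\overline{B_{r_*}(\bar p)}$, with $r_{*}$ slightly larger than the tabulated $r_{0}$, and bound $\norm{I-\cA\,DF(c)}_\infty$ over that box. Each $\theta$-integral is enclosed as $2\pi\bar a_{0}\pm 2\pi\cdot 2C_\rho/(\nu^{N}-1)$ by the Fourier/Poisson bound of Section~\ref{ss:cap-prelim}. Here $\bar a_{0}$ is the average of $N$ interval samples, and for the box version the samples are taken with $\psi$ ranging over the box. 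We then check $P(r_{0})=Y-(1-Z)r_{0}<0$. Theorem~\ref{thm:Newton-Kantorovich type theorem} gives a unique zero $\psi_{0}$ with $\norm{\psi_{0}-\bar p}_\infty\le r_{0}$.

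\emph{Step 2: side conditions on the enclosure.} We check $\cos\psi_{2}\neq0$ on the whole box $[\bar p_{2}-r_{0},\bar p_{2}+r_{0}]$, which is an interval evaluation. Collision-freeness means that for every $L\neq I$ and every $\theta$ we need $\abs{(I-L)\hat u}\ge\delta>0$. Equivalently, each rotation axis $a$ of $H$ satisfies $\abs{\inner{a}{\nor}}<1$, where $\nor=R(\psi_{1},\psi_{2})e_{3}$. This gives finitely many interval inequalities over the box. The same lower bound $\delta$ persists on a complex strip $\abs{\operatorname{Im}\theta}\le\rho$ for small $\rho$, which certifies the analyticity required in Section~\ref{ss:cap-prelim}. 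The constant $C_\rho$ then follows crudely from $\abs{(I-L)\hat u(\theta-i\rho)}^{2}$ bounded below by its real-part value minus $O(\sinh^{2}\rho)$.

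\emph{Step 3: nondegeneracy of the $4\times4$ Hessian.} We assemble \eqref{eq:H4} on the box. The $\epsilon$-block comes from \eqref{eq:hess-recon}, using $\partial_e^2W_e|_{e=0}=-\tfrac12+\cos^{2}\theta$ from \eqref{eq:d2W}, evaluated at $\psi_{3}\in\{0,\tfrac\pi2,\tfrac\pi4\}$. The mixed entries come from \eqref{eq:mix-recon}, and they in fact vanish by Remark~\ref{rem:block}, though we enclose them anyway. The orientation block comes from Step 1. We then show that the interval determinant excludes $0$; a simpler alternative is to show that both diagonal blocks are nonsingular. This verifies Hypothesis~\ref{hyp:circular} at $\psi_{0}$, and Theorem~\ref{thm:circular} yields the branch and the stated expansion with $n=\abs H$.

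The main obstacle is obtaining a $Z$ bound well below $1$ with tight integral enclosures near orientations whose great circle passes close to the axes. There the integrand is sharply peaked, $\rho$ must be small, and $\nu^{N}$ grows slowly. The first remedy is to increase $N$, since the error decays geometrically in $N$ even with a coarse $C_\rho$. If that is not enough, we split the box into subboxes for the $Z$ estimate. For the icosahedral case, with $59$ group terms and small $\delta$, this cost dominates.
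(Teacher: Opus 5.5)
Your plan follows the paper's route. It runs a radii-polynomial Newton--Kantorovich verification of the two orientation equations at $(\epsilon_{1},\epsilon_{2})=(0,0)$, with the $\theta$-integrals enclosed by the Fourier--Poisson bound. It then encloses the $4\times4$ Hessian \eqref{eq:H4} on the same box, bounds its determinant away from zero, and appeals to Theorem~\ref{thm:circular}. One step is wrong as written, however: the reformulation of collision-freeness in Step~2.

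The great circle $\{\hat u:\inner{\hat u}{\nor}=0\}$ passes through $\pm a$ exactly when $\inner{a}{\nor}=0$, not when $\abs{\inner{a}{\nor}}=1$. Take a rotation $L\in H$ of angle $\alpha$ about the unit axis $a$. For unit $v$ one has $\abs{(I-L)v}=2\abs{\sin(\alpha/2)}\sqrt{1-\inner{v}{a}^{2}}$. The maximum of $\inner{v}{a}^{2}$ over unit $v\perp\nor$ is $1-\inner{a}{\nor}^{2}$, so
\[
\min_{\theta}\abs{(I-L)\hat u(\theta)}=2\abs{\sin(\alpha/2)}\,\abs{\inner{a}{\nor}} .
\]
The inequalities to certify over the box are therefore $\abs{\inner{a}{\nor}}>0$ for every rotation axis $a$ of $H$. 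This formula also gives you directly the explicit $\delta$ that your bound on $C_{\rho}$ needs.

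The condition $\abs{\inner{a}{\nor}}<1$ that you wrote only rules out $\nor=\pm a$, an orbit plane perpendicular to an axis. That case is harmless, since there $\hat u$ stays as far from that axis as possible. Worse, your condition is satisfied by every orientation whose great circle runs through an axis, so checking it certifies nothing about collisions. With that correction the plan is sound and coincides with the paper's.
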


\begin{proof}

\begin{table}[ht]
\begin{center}
\begin{tabular}{|c|c|c|c|c|c|c|c|}
\hline
$H$ & $\bar\psi_{1}$ & $\bar\psi_{2}$ & Y & Z & $r_{0}$ & $|\det2\nabla^{2}\Psi|$ & Figure \\ \hline
$\T$ & $5.497787$ & $5.004235$ & $ 1.3 \times 10^{-13} $ & $ 9.7 \times 10^{-8}$ & $1.3 \times 10^{-13}$ & $8.0 \times 10^{2}$ & \ref{fig:cap-tet-circ}
\\ \hline
$\mathbb{O}$ \#1 & $5.348523$ & $5.503986$ & $2.3 \times 10^{-14}$ & $3.4 \times 10^{-8}$ & $2.3 \times 10^{-14}$ & $2.8 \times 10^{5} $ &
\ref{fig:cap-oct-circ1} \\ \hline
$\mathbb{O}$ \#2 & $5.250347$ & $5.166846$ & $ 5.0 \times 10^{-14} $ & $ 5.3 \times 10^{-8} $ & $5.0 \times 10^{-14}$ & $ 1.0 \times 10^{5} $ &
\ref{fig:cap-oct-circ2} \\ \hline
$\mathbb{I}$ \#1 & $5.799017$ & $6.188895$ & $ 1.2 \times 10^{-14} $ & $ 6.1 \times 10^{-8} $ & $1.2 \times 10^{-14}$ & $ 1.2 \times 10^{7} $ &
\ref{fig:cap-ico-circ1} \\ \hline
$\mathbb{I}$ \#2 & $5.849764$ & $6.046874$ & $ 1.2 \times 10^{-14} $ & $ 4.0 \times 10^{-8} $ & $1.2 \times 10^{-14}$ & $ 4.2 \times 10^{7} $ &
\ref{fig:cap-ico-circ2} \\ \hline
$\mathbb{I}$ \#3 & $5.983287$ & $6.046539$ & $ 1.0\times 10^{-14} $ & $ 3.3 \times 10^{-8} $ & $1.0 \times 10^{-14}$ & $ 1.5 \times 10^{8} $ &
\ref{fig:cap-ico-circ3} \\ \hline
$\mathbb{I}$ \#4 & $6.191860$ & $5.915899$ & $ 1.4 \times 10^{-14} $ & $ 5.4 \times 10^{-8} $ & $1.4 \times 10^{-14}$ & $ 1.1 \times 10^{8} $ &
\ref{fig:cap-ico-circ4} \\ \hline
\end{tabular}
\end{center}
\caption{Critical orientations of the circular orbit, $e_{0}=0$,
enclosed by Theorem~\ref{thm:cap-circular}, each row yielding solutions by
Theorem~\ref{thm:circular}, rounded to 2 significant digits.}
\label{tab:cap-e0}
\end{table}



\rev{The enclosures are those produced by the procedure described above, run at the data of Table~\ref{tab:cap-e0}; the code and the validated bounds are available at \cite{Constantineau_CAP_Code}.} The consequence follows by citation. \rev{Four} assertions have just been established, the existence of the zero $\psi_{0}$ inside the enclosure, the avoidance of the rotation axes of $H$ by its great circle, \rev{the inequality $\cos\psi_{0,2}\neq0$,} and the invertibility of the $4\times4$ Hessian $\nabla^{2}\Psi$ at the circle of that orientation. These are
exactly the requirements of Hypothesis~\ref{hyp:circular} of
Section~\ref{sec:circular}. Theorem~\ref{thm:circular} of that section therefore
applies at $\psi_{0}$, and it supplies both the branch of
$2\pi\varepsilon^{1/2}$-periodic solutions with the symmetry \eqref{eq:symm} and
its expansion in powers of $\varepsilon$.
\end{proof}

\begin{theorem}[Eccentric critical points, $e_{0}\neq0$]
\label{thm:cap-ecc}
Let the group $H$, the approximate critical point
$\bar p=(\bar e,\bar\psi_{1},\bar\psi_{2},\bar\psi_{3})$ and the radius $r_{0}$ be
\rev{given by} one of the four rows of Table~\ref{tab:cap-ecc}, and let $n=\abs H$. Then the map
$F=\nabla\Phi$ has a unique zero
$(e_{0},\psi_{0})$ with $\norm{(e_{0},\psi_{0})-\bar p}_\infty \le r_{0}$. It satisfies
$e_{0}\in(0,1)$, its great circle avoids the rotation axes of $H$, \rev{$\cos\psi_{0,2}\neq0$,} and
$\nabla^{2}\Phi(e_{0},\psi_{0})$ is invertible. That is,
Hypothesis~\ref{hyp:reduced} holds at $(e_{0},\psi_{0})$.

Consequently, by Theorem~\ref{thm:cont}, there is $\varepsilon_{0}>0$ such that
for every $0<\varepsilon<\varepsilon_{0}$ the $(n+1)$-body problem with central
mass $\varepsilon^{-1}$ has a $2\pi\varepsilon^{1/2}$-periodic solution with the
symmetry \eqref{eq:symm}, with the expansion \eqref{eq:expansion} about the Kepler
ellipse $u_{e_{0},\psi_{0}}$ of eccentricity $e_{0}$ and orientation $\psi_{0}$.
In particular $u_{\varepsilon}=u_{e_{0},\psi_{0}}+O(\varepsilon)$.
\end{theorem}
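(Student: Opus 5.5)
The plan is to apply Theorem~\ref{thm:Newton-Kantorovich type theorem} with $\cN=4$ to $\cF=F=2\nabla\Phi$ in the unknown $p=(e,\psi_{1},\psi_{2},\psi_{3})$, at each of the four rows of Table~\ref{tab:cap-ecc}. The factor $2$ cancels the $\tfrac12$ in $W_{e}$ and does not change the zero set. Take $\bar p$ from the Julia notebook and choose $\cA$ to be a floating-point approximation of $DF(\bar p)^{-1}$. Fix a radius $r_{*}$ slightly larger than the target $r_{0}$, say $r_{*}=10^{-12}$. Then enclose, in interval arithmetic:

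\begin{itemize}
\item the vector $F(\bar p)$, which gives $Y$ through \eqref{EQ: Y};
\item the matrix $DF(c)$ for $c$ ranging over the interval box $\overline{B_{r_{*}}(\bar p)}$ in the $\infty$-norm, which gives $Z$ through \eqref{EQ: Z}.
\end{itemize}

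Every entry is a single $\theta$-integral of the explicit integrands of Section~\ref{sec:reduced}:
\begin{itemize}
\item $W_{e}\,\partial_{\psi_{j}}S$ and $\partial_{e}W_{e}\,S$ for the gradient, using \eqref{eq:dpsi} and \eqref{eq:dW};
\item \eqref{eq:hpsi} with weight $W_{e}$, $\partial_{e}W_{e}\,\partial_{\psi_{j}}S$, and $\partial_{e}^{2}W_{e}\,S$ from \eqref{eq:d2W} for the Hessian.
\end{itemize}
Each is computed as $2\pi\bar a_{0}$ plus the aliasing error $2\pi\cdot 2C_{\rho}/(\nu^{N}-1)$ of Section~\ref{ss:cap-prelim}. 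The group sum runs over the $|H|-1$ matrices generated by $A,B$; these are stored as interval matrices, since $\phi$ is irrational in the icosahedral case. Once $P(r_{0})=Y-(1-Z)r_{0}<0$ holds with $r_{0}\le r_{*}$, the theorem yields a unique zero $(e_{0},\psi_{0})$ in the ball.

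The remaining parts of Hypothesis~\ref{hyp:reduced} are read off the same enclosure. The condition $e_{0}\in(0,1)$ and $\cos\psi_{0,2}\neq0$ hold because the interval $[\bar e\pm r_{0}]$ lies in $(0,1)$ and $\cos$ of $[\bar\psi_{2}\pm r_{0}]$ excludes $0$ (the smallest value is about $0.23$). For invertibility, compute the interval determinant of $DF$ over the ball and check that it excludes $0$. Alternatively, $Z<1$ already gives invertibility: $\cA DF(c)$ is then invertible by a Neumann series, and $\cA$ is invertible.

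Avoidance of the rotation axes needs an argument valid for every orientation in the box, not just the centre. The great circle of $\psi$ is the equator orthogonal to $\nor=R(\psi_{1},\psi_{2})e_{3}$. It meets the axis through the unit vector $a$ if and only if $\inner{\nor}{a}=0$. So it suffices to check, for each of the finitely many axes of $H$ (listed by rotation type in Section~\ref{sec:groups}), that the interval enclosure of $\inner{\nor}{a}$ over the box excludes $0$.

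This avoidance check is also what makes the Fourier bound legitimate. The integrand must be analytic on a strip $|\mathrm{Im}\,\theta|\le\rho$, and it also has the factors $(1+e\cos\theta)^{-m}$. I therefore expect the main obstacle to be producing a rigorous constant $C_{\rho}$: a bound of $|f(\theta-i\rho)|$ uniformly over the parameter box, together with a proof that no singularity lies in the strip. Three sources of singularity have to be controlled.
\begin{itemize}
\item \emph{Eccentricity factors.} The zeros of $1+e\cos\theta$ satisfy $\cosh(\mathrm{Im}\,\theta)=1/e$ on the relevant line. For $\bar e\approx0.96$ this forces a small $\rho$, roughly $\rho<\operatorname{arccosh}(1/e_{\max})$.
\item \emph{Collision factors.} Write $|(I-L)\hat u|^{2}$ as a trigonometric polynomial in $\theta$ and extend it complexly. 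Its real part must be bounded below on the strip, so that the principal branch of the power $-1/2$ (or $-3/2$, $-5/2$) is analytic there.
\item \emph{Enclosure over the strip.} Bound these quantities by interval evaluation on a mesh of the segment $\theta-i\rho$, $\theta\in[0,2\pi]$, with complex interval arithmetic, and take $C_{\rho}$ as the resulting maximum.
\end{itemize}
A small $\rho$ then requires a large $N$, so that $\nu^{-N}=e^{-\rho N}$ is negligible; I would simply take $N$ a large power of $2$ until the error is below about $10^{-15}$. After this analytic bound is in place, the rest is mechanical bookkeeping.
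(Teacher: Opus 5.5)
Your proposal is correct and follows the paper's own route. The paper's proof applies the Newton--Kantorovich/radii-polynomial theorem to $F=2\nabla\Phi$, with interval enclosures of the $\theta$-integrals by the DFT/aliasing bound and the group sum over the closure of $A,B$. It gets invertibility from $Z<1$, exactly as in the paper's remark, and obtains the consequence by citing Theorem~\ref{thm:cont}; your explicit axis-avoidance test $\inner{\nor}{a}\neq0$ over the box and your analysis of the strip singularities for $C_{\rho}$ spell out what the paper delegates to its code. The only slip is your illustrative choice $r_{*}=10^{-12}$: it is smaller than the $Y$, hence the $r_{0}$, of the $\mathbb{O}$~\#1 and $\mathbb{I}$ rows, so $r_{*}$ has to be chosen row by row.
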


\begin{proof}

\begin{table}[ht]
\begin{center}
\begin{tabular}{|c|c|c|c|c|c|c|c|c|}
\hline
$H$ & $\bar e$ & $\bar\psi_{1}$ & $\bar\psi_{2}$ & $\bar\psi_{3}$ & Y & Z & $r_{0}$ &
Figure \\ \hline
$\T$ & $0.844836$ & $5.497787$ & $4.947131$ & $3.141593$ & $9.2 \times 10^{-13}$ & $1.2 \times 10^{-6}$ & $9.2 \times 10^{-13}$ &
\ref{fig:cap-tet-ecc} \\ \hline
$\mathbb{O}$ \#1 & $0.873524$ & $5.348741$ & $5.200484$ & $4.100400$ & $1.6 \times 10^{-12}$ & $3.9 \times 10^{-6}$ &
$1.6 \times 10^{-12}$ & \ref{fig:cap-oct-ecc1} \\ \hline
$\mathbb{O}$ \#2 & $0.962477$ & $5.441475$ & $5.535369$ & $2.949510$ & $6.6 \times 10^{-13}$ & $4.2 \times 10^{-6}$ &
$6.6 \times 10^{-13}$ & \ref{fig:cap-oct-ecc2} \\ \hline
$\mathbb{I}$ & $0.679506$ & $5.793234$ & $6.190758$ & $6.259757$ & $7.8 \times 10^{-12}$ & $5.4 \times 10^{-5}$ &
$7.8 \times 10^{-12}$ & \ref{fig:cap-ico-ecc} \\ \hline
\end{tabular}
\end{center}
\caption{Critical points of positive eccentricity, enclosed by
Theorem~\ref{thm:cap-ecc}, each row yielding solutions by
Theorem~\ref{thm:cont}, rounded to 2 significant digits.}
\label{tab:cap-ecc}
\end{table}



\rev{
\begin{remark}
    In Table~\ref{tab:cap-ecc} we do not verify the invertibility of $\nabla^2 \Phi$ as $Z < 1$ in the CAP implies it. Indeed,
    \[
    Z < 1 \implies \norm{I - 2A\nabla^2 \Phi} <1 \implies 2A\nabla^2 \Phi \text{ invertible} \implies \nabla^2 \Phi \text{ invertible}.
    \] 
\end{remark}

}

\rev{The enclosures are those produced by the procedure described above, run at the data of Table~\ref{tab:cap-ecc}; the code and the validated bounds are available at \cite{Constantineau_CAP_Code}.} The consequence follows by citation. The assertions just established are the existence of the zero $(e_{0},\psi_{0})$ inside the enclosure, with $e_{0}\in(0,1)$, with a great circle avoiding the rotation axes of $H$, \rev{with $\cos\psi_{0,2}\neq0$,} and the invertibility of $\nabla^{2}\Phi(e_{0},\psi_{0})$. These are exactly the
requirements of Hypothesis~\ref{hyp:reduced} of Section~\ref{sec:cont}, that is,
$(e_{0},\psi_{0})$ is a nondegenerate critical point of the reduced function
$\Phi$. Theorem~\ref{thm:cont} of Section~\ref{sec:cont} therefore applies at
$(e_{0},\psi_{0})$, and it supplies both the branch of
$2\pi\varepsilon^{1/2}$-periodic solutions with the symmetry \eqref{eq:symm} and
its expansion \eqref{eq:expansion}.
\end{proof}

Two of the tetrahedral angles in Table~\ref{tab:cap-ecc} are exact,
$\psi_{1}=7\pi/4$ and $\psi_{3}=\pi$, both forced by the mirror
$\nor_{2}=\nor_{3}$ of the normalizer $N_{O(3)}(H)$ on which that critical
point lies, whereas the two octahedral points lie in the interior of a fundamental
domain of that normalizer.

The \rev{eleven} configurations enclosed by Theorems~\ref{thm:cap-ecc} and \ref{thm:cap-circular} are drawn in Figures~\ref{fig:cap-grid1} and \ref{fig:cap-grid2} below and, for the most eccentric
of them, in Figure~\ref{fig:intro-oct2} of the introduction.

\begin{figure}[htbp]
    \centering
    \begin{subfigure}[b]{0.45\textwidth}
        \centering
        \includegraphics[width=\textwidth]{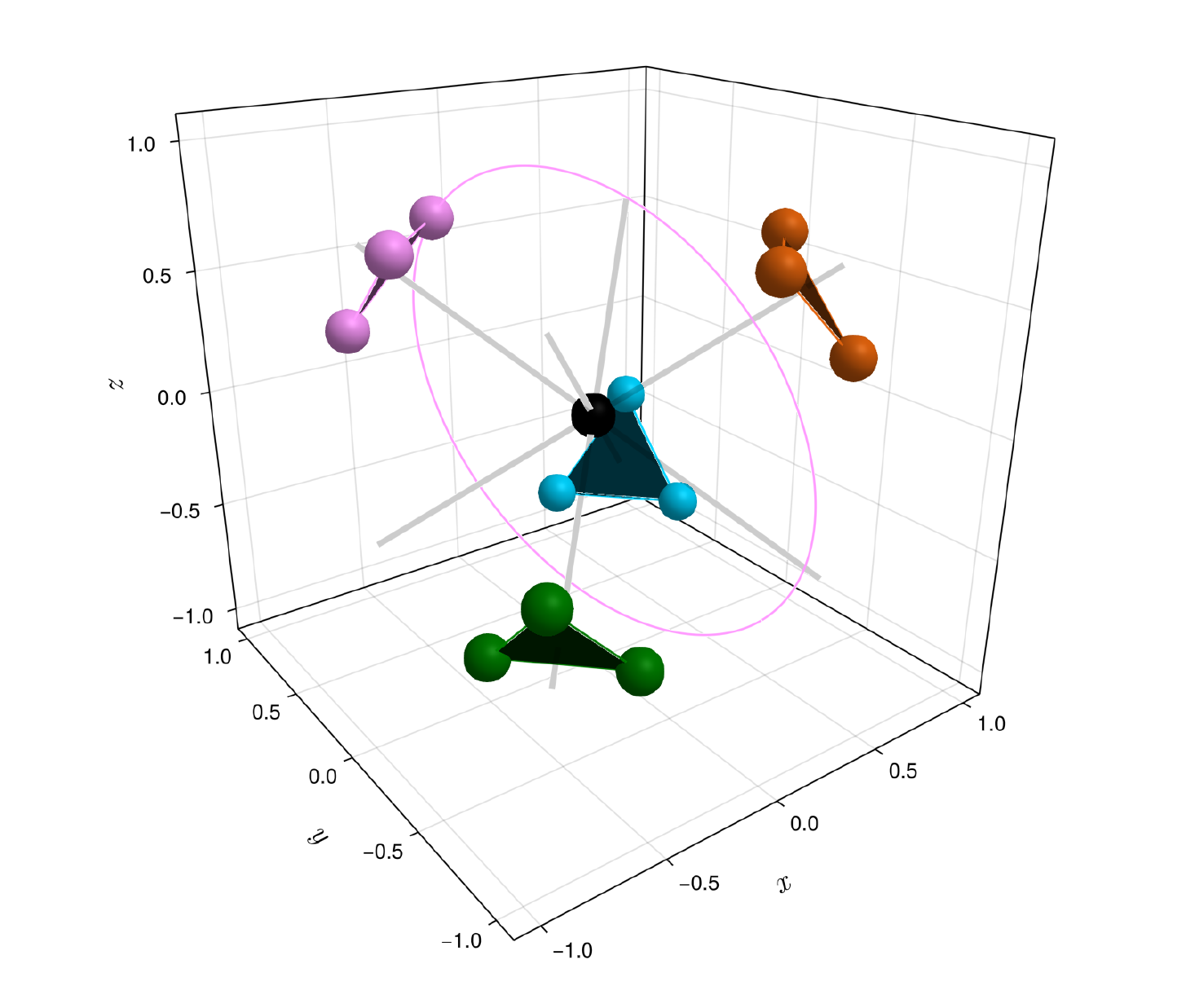}
        \caption{$\T$}
        \label{fig:cap-tet-circ}
    \end{subfigure}
    \hfill
    \begin{subfigure}[b]{0.45\textwidth}
        \centering
        \includegraphics[width=\textwidth]{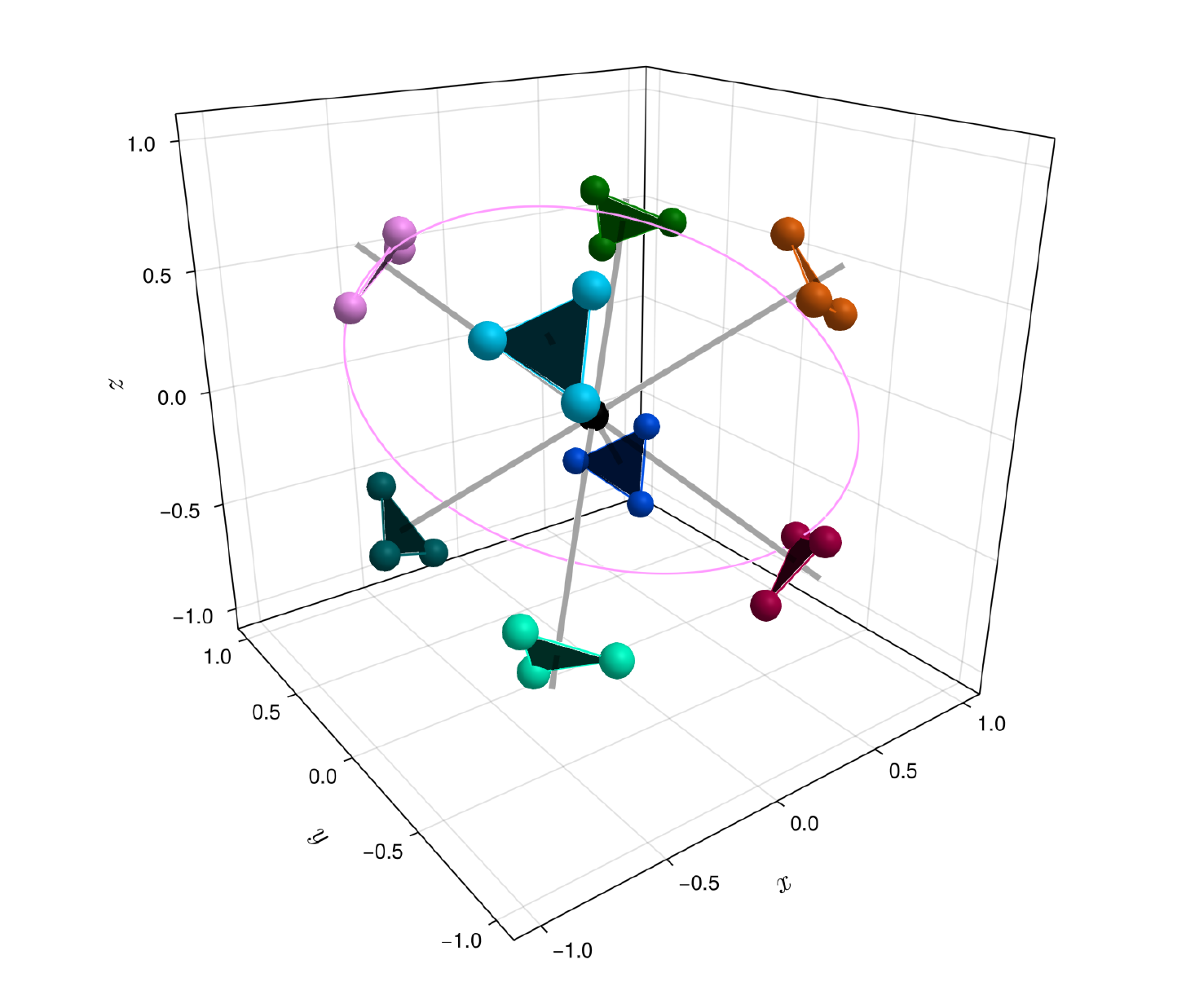}
        \caption{$\mathbb{O}$ \#1}
        \label{fig:cap-oct-circ1}
    \end{subfigure}
    
    \begin{subfigure}[b]{0.45\textwidth}
        \centering
        \includegraphics[width=\textwidth]{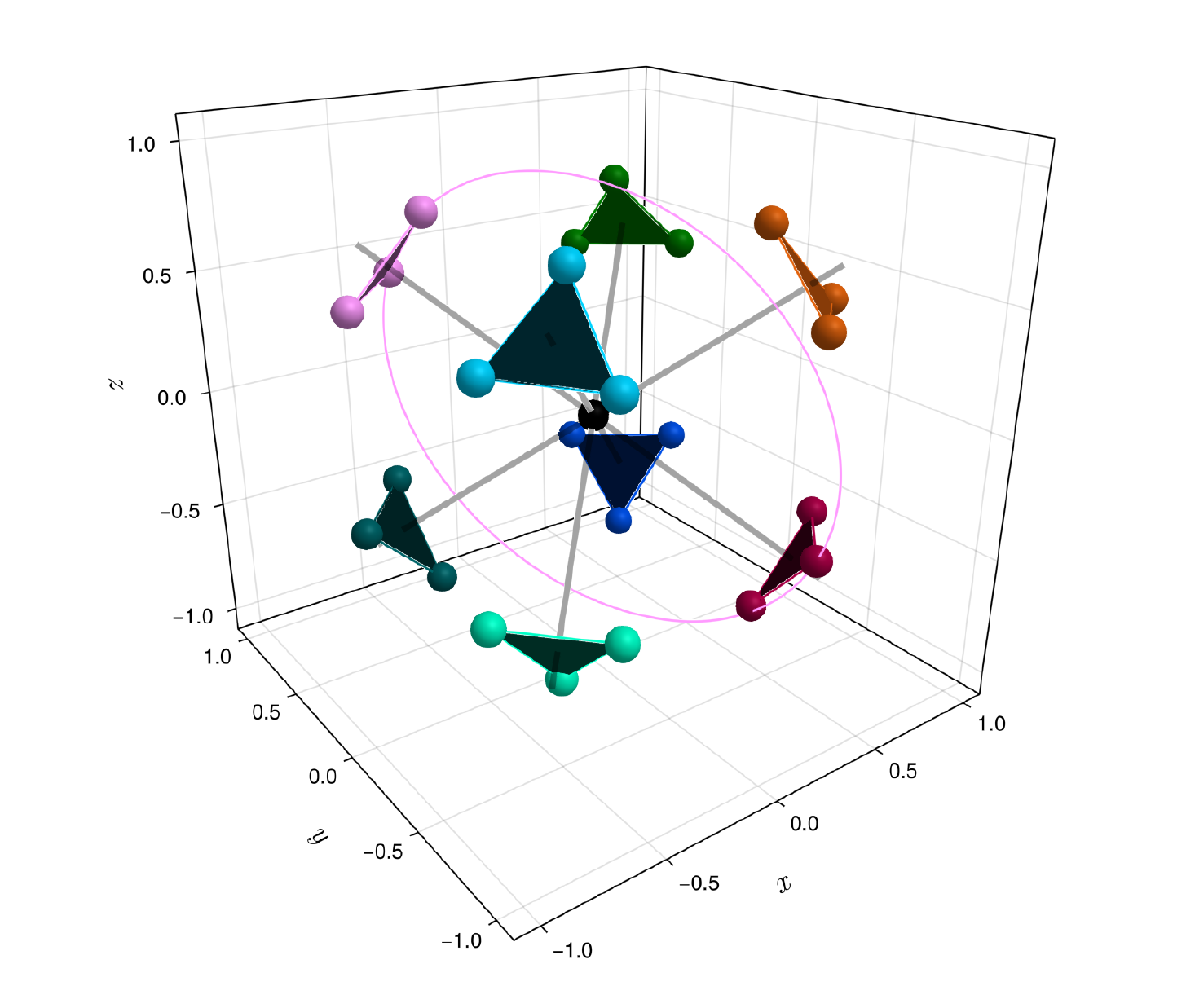}
        \caption{$\mathbb{O}$ \#2}
        \label{fig:cap-oct-circ2}
    \end{subfigure}
    \hfill
    \begin{subfigure}[b]{0.45\textwidth}
        \centering
        \includegraphics[width=\textwidth]{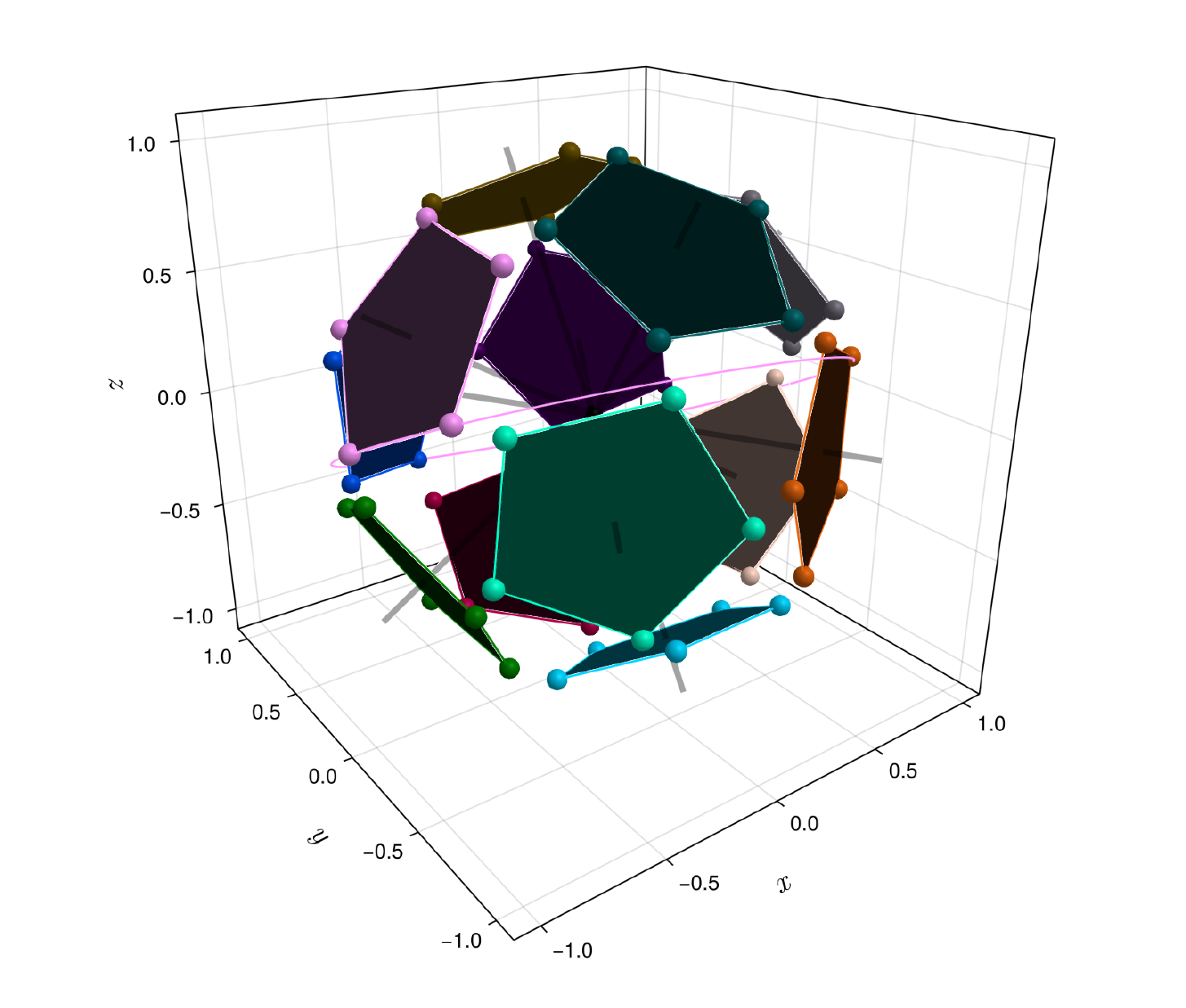}
        \caption{$\mathbb{I}$ \#1}
        \label{fig:cap-ico-circ1}
    \end{subfigure}

    \begin{subfigure}[b]{0.45\textwidth}
        \centering
        \includegraphics[width=\textwidth]{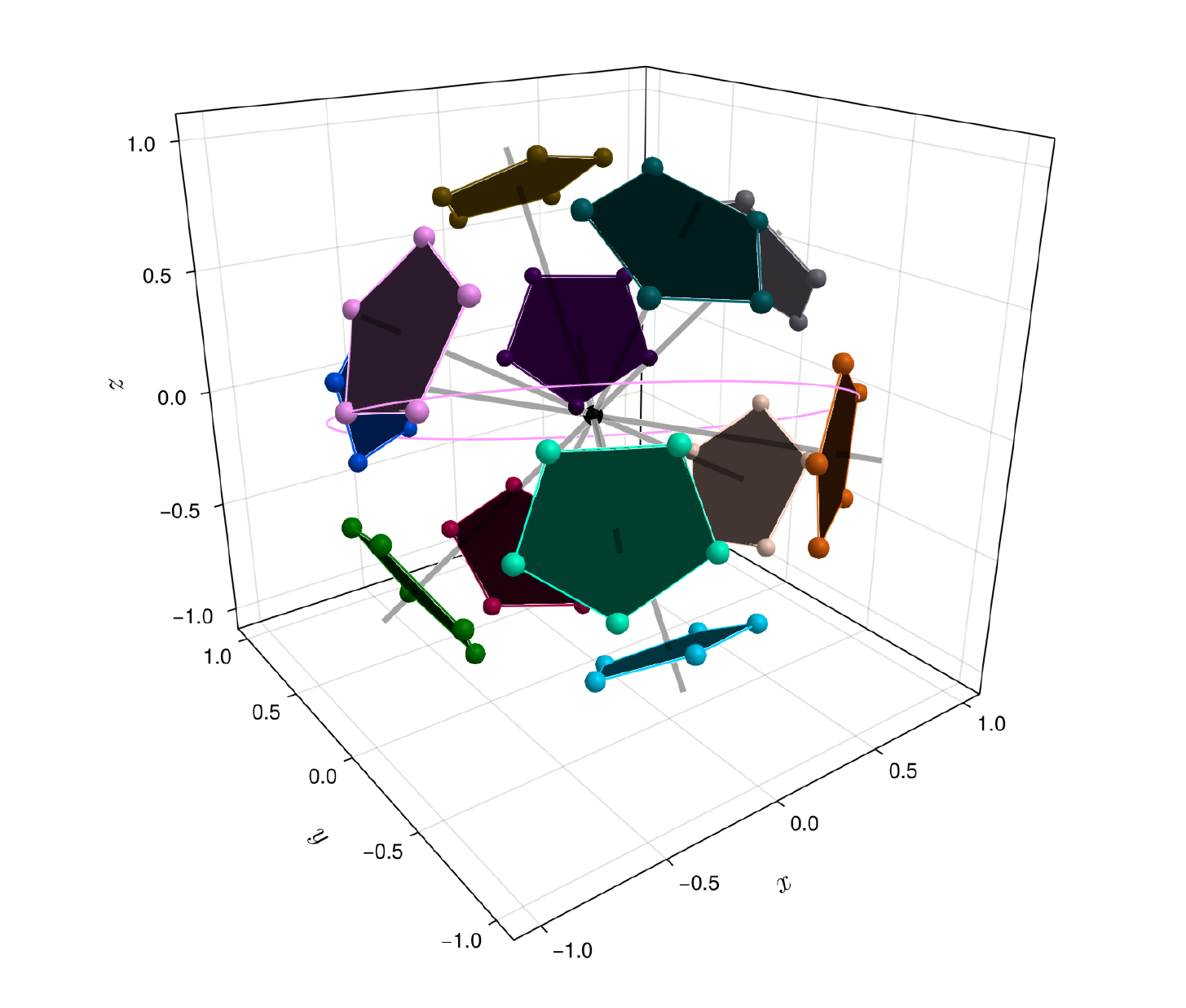}
        \caption{$\mathbb{I}$ \#2}
        \label{fig:cap-ico-circ2}
    \end{subfigure}
    \hfill
    \begin{subfigure}[b]{0.45\textwidth}
        \centering
        \includegraphics[width=\textwidth]{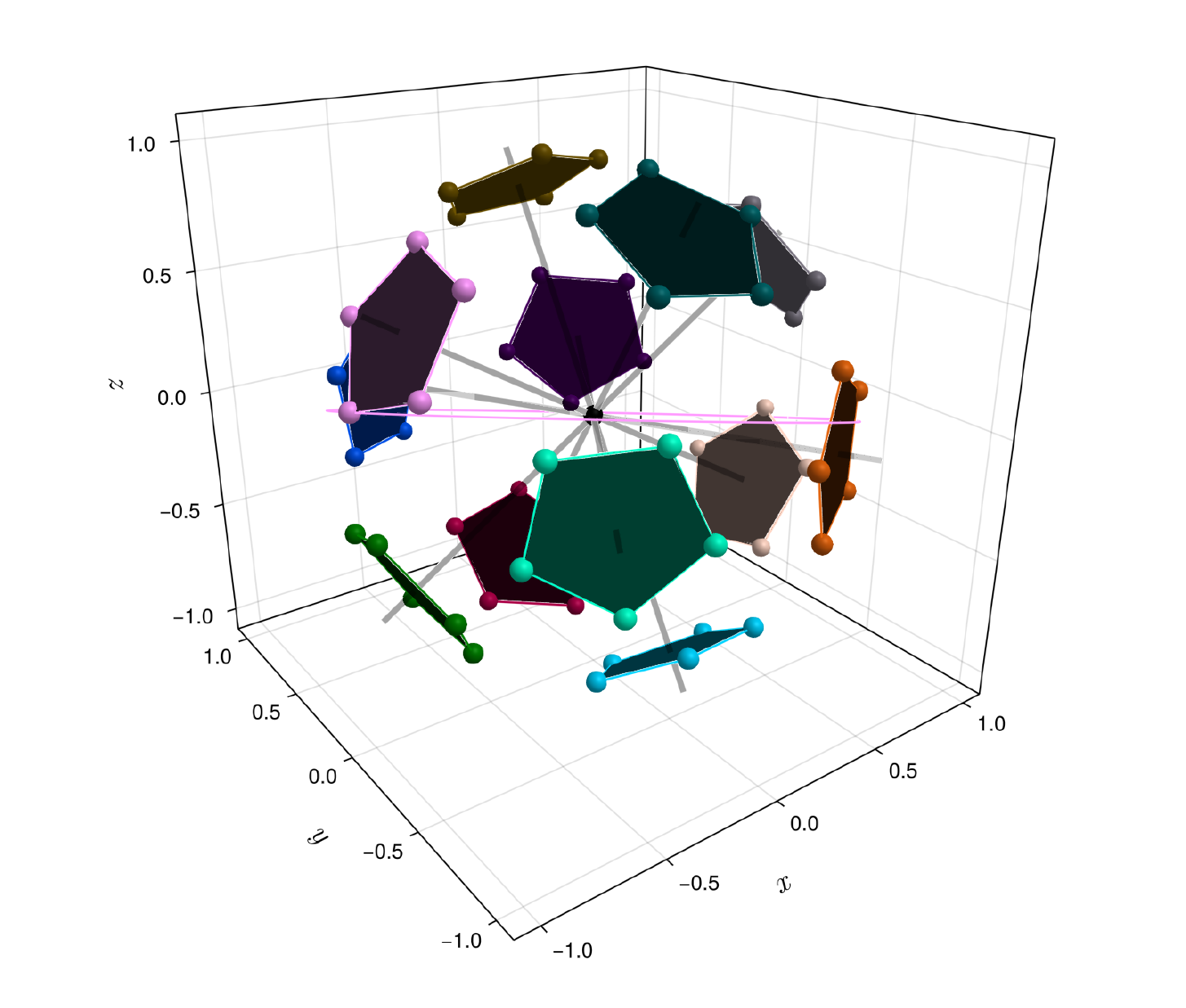}
        \caption{$\mathbb{I}$ \#3}
        \label{fig:cap-ico-circ3}
    \end{subfigure}

    \caption{The first six rows of Table~\ref{tab:cap-e0}, the
    circular solutions of limiting eccentricity $e_{0}=0$ as $\varepsilon\to0$.
    \rev{Each picture shows the $n+1$ bodies at $\tau = \pi$, corresponding to the apocentre of the eccentric orbits.} The central mass is the black ball at the origin, and the $n$ unit masses are one $H$-orbit, drawn as the vertices of a polyhedron whose lit faces give the perspective and display the symmetry. The reference body is the pink ball and its orbit the pink curve. The grey rods are rotation axes of $H$.}
    \label{fig:cap-grid1}
\end{figure}

\begin{figure}[htbp]
    \centering
    \begin{subfigure}[b]{0.45\textwidth}
        \centering
        \includegraphics[width=\textwidth]{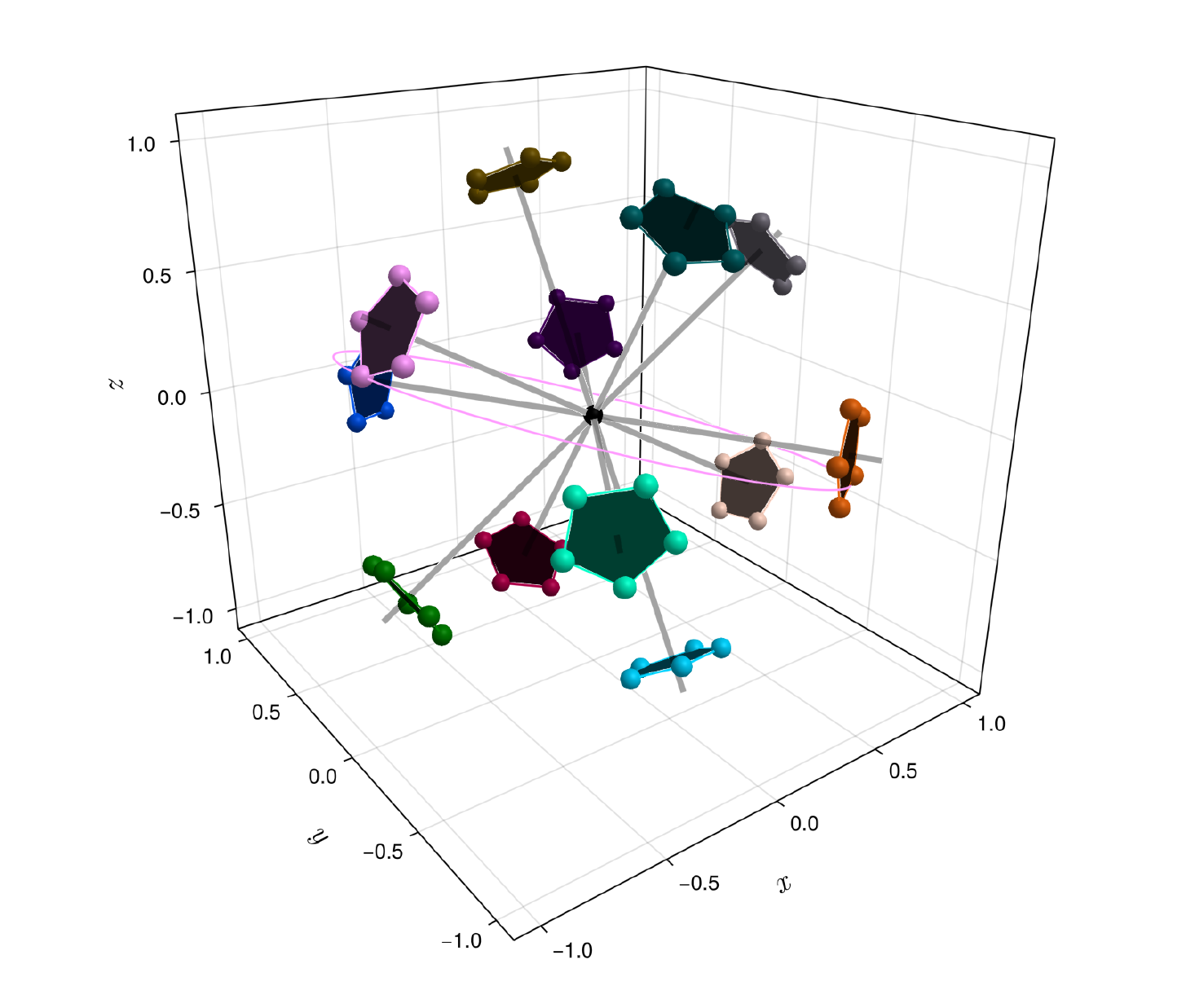}
        \caption{$\mathbb{I}$ \#4, $e_{0}=0$}
        \label{fig:cap-ico-circ4}
    \end{subfigure}
    \hfill
    \begin{subfigure}[b]{0.45\textwidth}
        \centering
        \includegraphics[width=\textwidth]{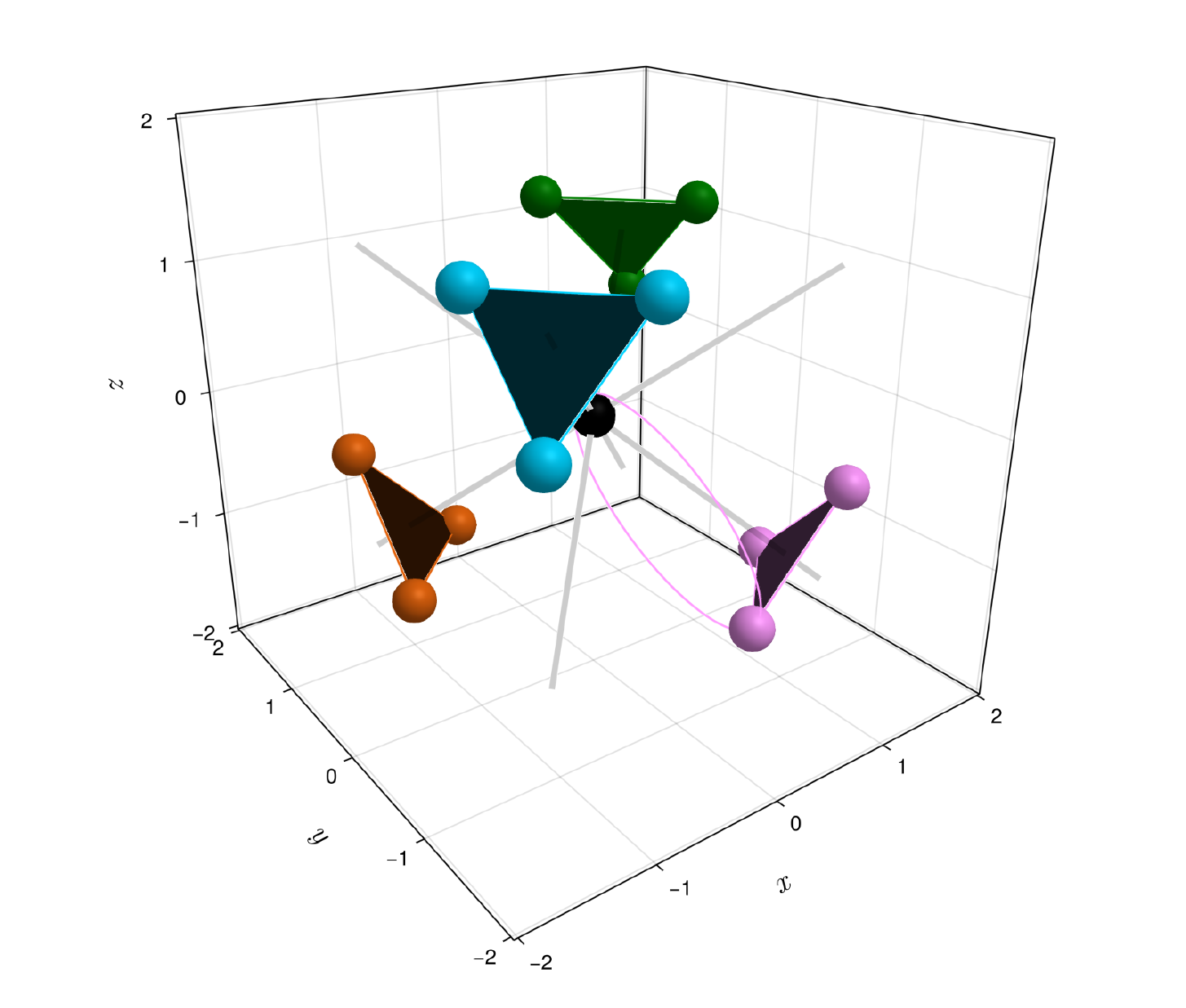}
        \caption{$\T$, $\bar e=0.844836$}
        \label{fig:cap-tet-ecc}
    \end{subfigure}
    
    \begin{subfigure}[b]{0.45\textwidth}
        \centering
        \includegraphics[width=\textwidth]{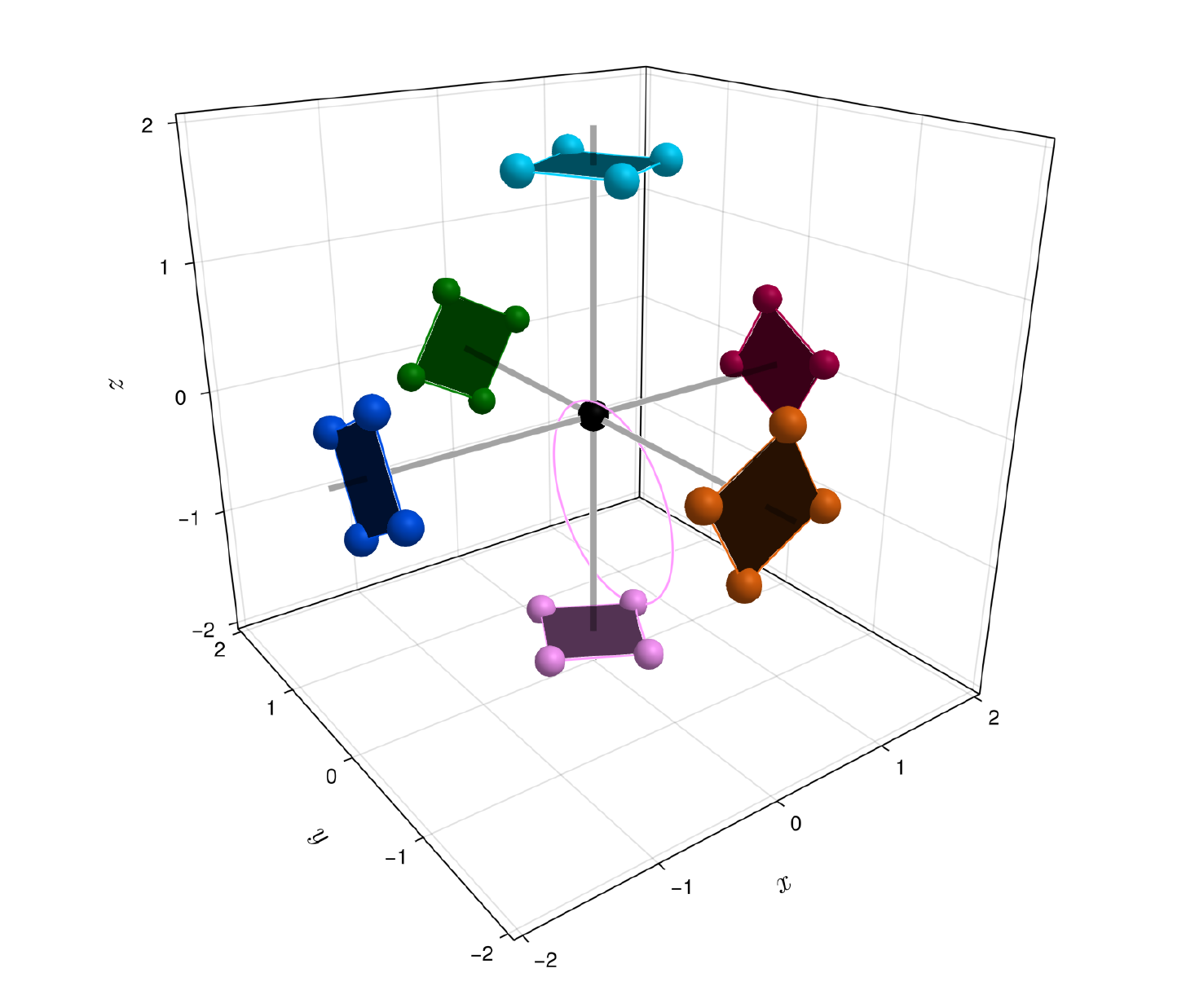}
        \caption{$\mathbb{O}$ \#1, $\bar e=0.873524$}
        \label{fig:cap-oct-ecc1}
    \end{subfigure}
    \hfill
    \begin{subfigure}[b]{0.45\textwidth}
        \centering
        \includegraphics[width=\textwidth]{O_ecc_2.pdf}
        \caption{$\mathbb{O}$ \#2, $\bar e=0.962477$}
        \label{fig:cap-oct-ecc2}
    \end{subfigure}

    \begin{subfigure}[b]{0.45\textwidth}
        \centering
        \includegraphics[width=\textwidth]{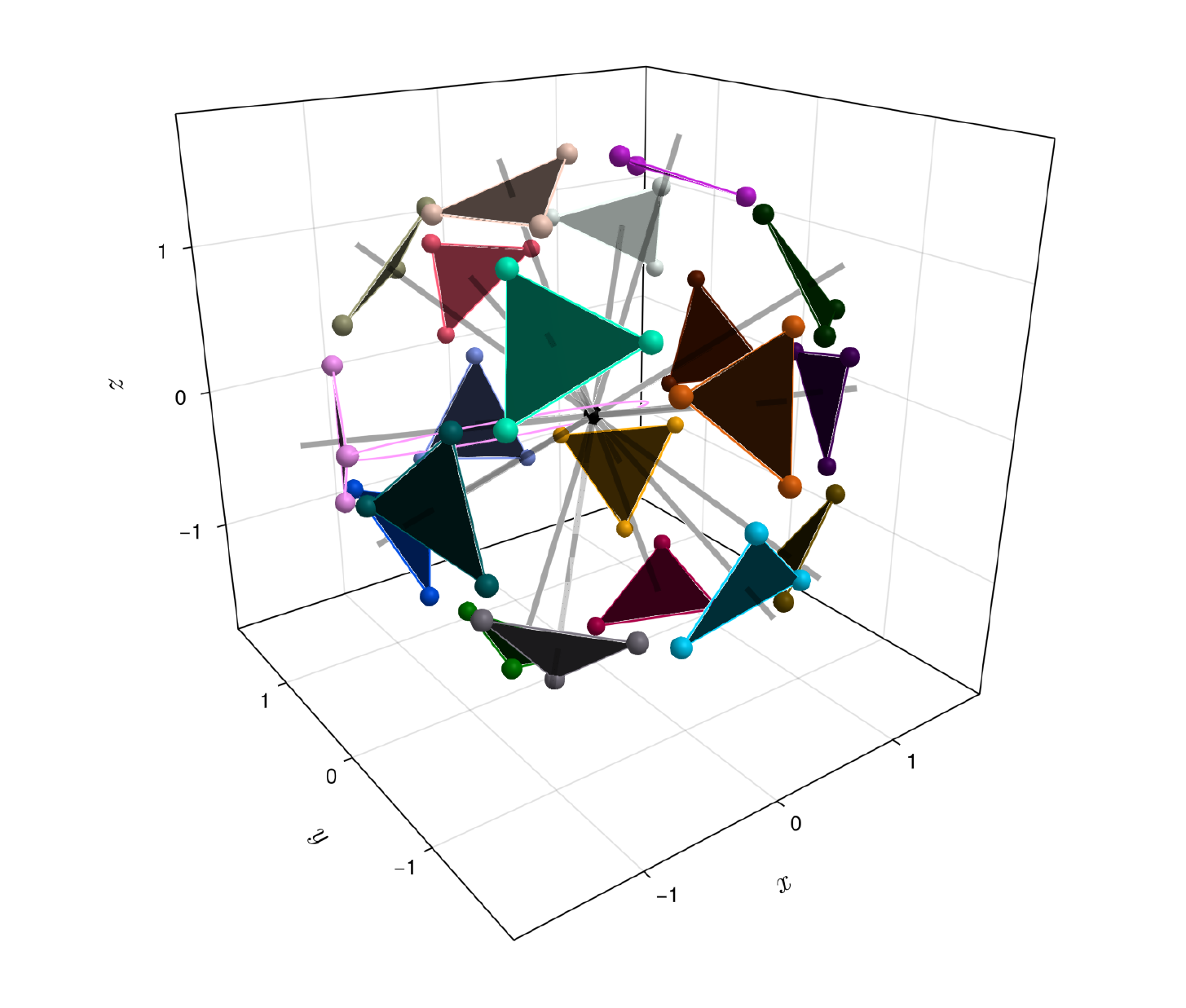}
        \caption{$\mathbb{I}$, $\bar e=0.679506$}
        \label{fig:cap-ico-ecc}
    \end{subfigure}

    \caption{The last row of Table~\ref{tab:cap-e0}, at
    $e_{0}=0$, and the four rows of Table~\ref{tab:cap-ecc}, at $e_{0}>0$. The
    drawing conventions are those of Figure~\ref{fig:cap-grid1}.}
    \label{fig:cap-grid2}
\end{figure}

\begin{remark}\label{rem:degenerate}
One critical orientation found by the search is absent from
Table~\ref{tab:cap-e0}, and it is absent for a reason of symmetry rather
than of numerics. It is the tetrahedral orientation at $e=0$ whose orbit-plane
normal is the body diagonal $(1,1,1)/\sqrt3$, that is
$(\psi_{1},\psi_{2})=(5.497787,\,5.667706)$. Its great circle is invariant under
the three-fold rotation about that diagonal, and the resulting extra symmetry
makes the eccentricity block of the Hessian \eqref{eq:H4} vanish identically. The
$4\times4$ Hessian is therefore singular there, Hypothesis~\ref{hyp:circular}
fails, and Theorem~\ref{thm:circular} does not apply. The orientation is
nonetheless a genuine critical point of the orientation equations, forced by the
three-fold symmetry. The degeneracy is of the reduced function and not of the
problem, and deciding whether a branch emanates from it requires an analysis
beyond the scope of this paper.
\end{remark}

\begin{remark}\label{rem:notexhaustive}
Tables~\ref{tab:cap-e0} and~\ref{tab:cap-ecc} are not claimed to list all the
critical points of $\Phi$. Each row is an existence
statement and nothing more, and the counts are lower bounds. Three regions of the
parameter space were not searched. The first is the set of eccentricities above
$e=0.9987$. The second is the set of orientations whose great circle passes closer
than $0.03$ to the collision set, which for $\mathbb{I}$ is a substantial part of
the domain, the icosahedral point of Theorem~\ref{thm:cap-ecc} lying at distance
$0.0317$ and so only just inside the searched region. The third consists of any
critical point contained, in the sweep over orbit planes, in a region smaller than
the grid spacing. Nothing in the paper depends on completeness, since a single row
of either table already yields a family of solutions.
\end{remark}

\begin{remark}[How the approximations were computed]\label{rem:numerics}
Three features of the numerical search produce the approximations $\bar p$ and
make them reliable, and we record them here since they play no part in the proofs.
First, at a fixed orbit plane the Kepler weight $W_{e}$ of \eqref{eq:W} is a
Poisson kernel in the variable $\lambda=(1-\sqrt{1-e^{2}})/e\in[0,1)$, so that
$\Phi$ is, \rev{on} that plane, a harmonic function of the eccentricity vector
$z=\lambda e^{i\psi_{3}}$ on the unit disc. Its interior critical points are
consequently always saddles, so that there is no interior maximum or minimum and
no descent method can reach them, and they are the \rev{zeros of an analytic function whose Taylor coefficients are the Fourier coefficients of the geometric sum $S$ of \eqref{eq:W} along the great circle, a polynomial once that series is truncated}. On the disc, therefore, nothing is iterated
and no root is missed. The natural radial variable is $\lambda$ and not $e$. Since
$\lambda=0.7$ already at $e=0.94$, a grid uniform in $e$ covers only half the disc
by area and misses the outer region, which is where the critical points lie.
Second, $\Phi$ is invariant under the normalizer $N_{O(3)}(H)$, of order
$48$ for $\T$ and $\mathbb{O}$, and $120$ for $\mathbb{I}$, so the search over
orbit planes may be confined to a spherical triangle of area
$4\pi/\abs{N_{O(3)}(H)}$, a fundamental domain of
its action on the sphere of orbit-plane normals.
Third, the integrand of \eqref{eq:Phi2} has a peak whose width is the distance
from the great circle to the nearest rotation axis of $H$, so the number of
quadrature nodes must be tied to that distance. At a fixed number of nodes the
Newton iteration converges onto quadrature noise near $e\to1$ and reports critical
points that do not exist.
\end{remark}

\bibliographystyle{siam}
\bibliography{references}

@article{MR4879804,
	author = {Lessard, Jean-Philippe and Pugliese, Alessandro},
	doi = {10.3934/dcdsb.2024181},
	fjournal = {Discrete and Continuous Dynamical Systems. Series B. A Journal Bridging Mathematics and Sciences},
	issn = {1531-3492,1553-524X},
	journal = {Discrete Contin. Dyn. Syst. Ser. B},
	mrclass = {37M20 (34C23 37Gxx 65P30)},
	mrnumber = {4879804},
	mrreviewer = {Joseph\ P\'aez Ch\'avez},
	number = {6},
	pages = {2135--2158},
	title = {Cusp bifurcations: numerical detection via two-parameter continuation and computer-assisted proofs of existence},
	url = {https://doi-org.proxy3.library.mcgill.ca/10.3934/dcdsb.2024181},
	volume = {30},
	year = {2025}}

@article{MR4930548,
	author = {van der Aalst, Lindsey and van den Berg, Jan Bouwe and Lessard, Jean-Philippe},
	doi = {10.1088/1361-6544/ade5e5},
	fjournal = {Nonlinearity},
	issn = {0951-7715,1361-6544},
	journal = {Nonlinearity},
	mrclass = {35C07},
	mrnumber = {4930548},
	number = {7},
	pages = {Paper No. 075029, 32},
	title = {Periodic localised traveling waves in the two-dimensional suspension bridge equation},
	url = {https://doi-org.proxy3.library.mcgill.ca/10.1088/1361-6544/ade5e5},
	volume = {38},
	year = {2025}}

@article{GaGN22,
	author = {Garc\'{\i}a-Azpeitia, Carlos and Garc\'{\i}a-Naranjo, Luis C.},
	doi = {10.1007/s00332-022-09792-y},
	journal = {Journal of Nonlinear Science},
	number = {3},
	pages = {39},
	title = {Platonic solids and symmetric solutions of the {$N$}-vortex problem on the sphere},
	volume = {32},
	year = {2022}}

@article{CDG16,
	author = {Calleja, Renato and Doedel, Eusebius and Garc\'{\i}a-Azpeitia, Carlos},
	doi = {10.1140/epjst/e2016-60009-y},
	journal = {The European Physical Journal Special Topics},
	number = {13-14},
	pages = {2741--2750},
	title = {Symmetry-breaking for a restricted {$n$}-body problem in the {M}axwell-ring configuration},
	volume = {225},
	year = {2016}}

@book{Dft_book,
	author = {Briggs, William L. and Henson, Van Emden},
	doi = {10.1137/1.9781611971514},
	eprint = {https://epubs.siam.org/doi/pdf/10.1137/1.9781611971514},
	publisher = {Society for Industrial and Applied Mathematics},
	title = {The DFT: An Owner's Manual for the Discrete Fourier Transform},
	url = {https://epubs.siam.org/doi/abs/10.1137/1.9781611971514},
	year = {1995}}

@misc{Constantineau_CAP_Code,
	author = {Constantineau, Kevin},
	note = {Available at: \url{https://github.com/BaronNashville/n_body_starting_point_code}},
	title = {{CAP Code}}}

@article{GaIz13,
	author = {C. Garc{\'\i}a-Azpeitia and J. Ize},
	doi = {https://doi.org/10.1016/j.jde.2012.08.022},
	issn = {0022-0396},
	journal = {Journal of Differential Equations},
	number = {5},
	pages = {2033-2075},
	title = {Global bifurcation of planar and spatial periodic solutions from the polygonal relative equilibria for the n-body problem},
	url = {https://www.sciencedirect.com/science/article/pii/S0022039612003361},
	volume = {254},
	year = {2013}}

@article{FGN11,
	author = {Fusco, G. and Gronchi, G. F. and Negrini, P.},
	doi = {10.1007/s00222-010-0306-3},
	issn = {1432-1297},
	journal = {Inventiones mathematicae},
	number = {2},
	pages = {283--332},
	title = {Platonic polyhedra, topological constraints and periodic solutions of the classical N-body problem},
	url = {https://doi.org/10.1007/s00222-010-0306-3},
	volume = {185},
	year = {2011}}

@article{BOZ19,
	author = {Boscaggin, Alberto and Ortega, Rafael and Zhao, Lei},
	doi = {10.1090/tran/7589},
	journal = {Transactions of the American Mathematical Society},
	pages = {677--703},
	title = {Periodic solutions and regularization of a {K}epler problem with time-dependent perturbation},
	url = {https://doi.org/10.1090/tran/7589},
	volume = {372},
	year = {2019}}

@book{MeHa91,
	author = {Meyer, Kenneth R. and Offin, Daniel C.},
	doi = {10.1007/978-3-319-53691-0},
	edition = {3},
	isbn = {978-3-319-53690-3},
	issn = {0066-5452},
	pages = {384},
	publisher = {Springer Cham},
	series = {Applied Mathematical Sciences},
	title = {Introduction to Hamiltonian Dynamical Systems and the N-Body Problem},
	url = {https://doi.org/10.1007/978-3-319-53691-0},
	year = {2017}}

@article{MR1420838,
	author = {Koch, Hans and Schenkel, Alain and Wittwer, Peter},
	coden = {SIREAD},
	doi = {10.1137/S0036144595284180},
	fjournal = {SIAM Review. A Publication of the Society for Industrial and Applied Mathematics},
	issn = {0036-1445},
	journal = {SIAM Rev.},
	mrclass = {39-01 (39B32 68N17)},
	mrnumber = {1420838 (97k:39001)},
	mrreviewer = {Kingsley Jones},
	number = {4},
	pages = {565--604},
	title = {Computer-assisted proofs in analysis and programming in logic: a case study},
	url = {http://dx.doi.org/10.1137/S0036144595284180},
	volume = {38},
	year = {1996}}

@book{MR2807595,
	author = {Tucker, Warwick},
	isbn = {978-0-691-14781-9},
	mrclass = {65-02 (65G30)},
	mrnumber = {2807595 (2012g:65004)},
	mrreviewer = {G. Alefeld},
	note = {A short introduction to rigorous computations},
	pages = {xii+138},
	publisher = {Princeton University Press, Princeton, NJ},
	title = {Validated numerics},
	year = {2011}}

@article{MR3444942,
	author = {van den Berg, Jan Bouwe and Lessard, Jean-Philippe},
	doi = {10.1090/noti1276},
	fjournal = {Notices of the American Mathematical Society},
	issn = {0002-9920,1088-9477},
	journal = {Notices Amer. Math. Soc.},
	mrclass = {68W30 (03B35 35K30 35K59 37M99)},
	mrnumber = {3444942},
	mrreviewer = {Hil\ G. E. Meijer},
	number = {9},
	pages = {1057--1061},
	title = {Rigorous numerics in dynamics},
	url = {https://doi.org/10.1090/noti1276},
	volume = {62},
	year = {2015}}

@book{MR3971222,
	author = {Nakao, Mitsuhiro T. and Plum, Michael and Watanabe, Yoshitaka},
	doi = {10.1007/978-981-13-7669-6},
	isbn = {978-981-13-7668-9; 978-981-13-7669-6},
	mrclass = {65G20 (35-04 65J15)},
	mrnumber = {3971222},
	pages = {xiii+467},
	publisher = {Springer, Singapore},
	series = {Springer Series in Computational Mathematics},
	title = {Numerical verification methods and computer-assisted proofs for partial differential equations},
	url = {https://doi-org.proxy3.library.mcgill.ca/10.1007/978-981-13-7669-6},
	volume = {53},
	year = {2019}}

@article{MR3990999,
	author = {G\'{o}mez-Serrano, Javier},
	doi = {10.1007/s40324-019-00186-x},
	fjournal = {SeMA Journal. Boletin de la Sociedad Espan\~{n}ola de Matem\'{a}tica Aplicada},
	issn = {2254-3902},
	journal = {SeMA J.},
	mrclass = {68T15 (35Q35 35R35 65G30)},
	mrnumber = {3990999},
	number = {3},
	pages = {459--484},
	title = {Computer-assisted proofs in {PDE}: a survey},
	url = {https://doi-org.proxy3.library.mcgill.ca/10.1007/s40324-019-00186-x},
	volume = {76},
	year = {2019}}

@article{CoGaLe2021,
	author = {Constantineau, Kevin and Garc{\'\i}a-Azpeitia, Carlos and Lessard, Jean-Philippe},
	date = {2021/10/28},
	doi = {10.1007/s12346-021-00532-3},
	id = {Constantineau2021},
	isbn = {1662-3592},
	journal = {Qualitative Theory of Dynamical Systems},
	number = {1},
	pages = {3},
	title = {Spatial relative equilibria and periodic solutions of the {C}oulomb {$(n+1)$}-body problem},
	url = {https://doi.org/10.1007/s12346-021-00532-3},
	volume = {21},
	year = {2022}}

@article{Yam98,
	author = {Yamamoto, Nobito},
	doi = {10.1137/S0036142996304498},
	journal = {SIAM Journal on Numerical Analysis},
	number = {5},
	pages = {2004--2013},
	title = {A numerical verification method for solutions of boundary value problems with local uniqueness by {B}anach's fixed-point theorem},
	volume = {35},
	year = {1998}}

@article{DLM07,
	author = {Day, Sarah and Lessard, Jean-Philippe and Mischaikow, Konstantin},
	doi = {10.1137/050645968},
	journal = {SIAM Journal on Numerical Analysis},
	number = {4},
	pages = {1398--1424},
	title = {Validated continuation for equilibria of {PDE}s},
	volume = {45},
	year = {2007}}

@article{Kra69,
	author = {Krawczyk, R.},
	doi = {10.1007/BF02234767},
	journal = {Computing},
	pages = {187--201},
	title = {{N}ewton-{A}lgorithmen zur {B}estimmung von {N}ullstellen mit {F}ehlerschranken},
	volume = {4},
	year = {1969}}

@article{Moo77,
	author = {Moore, Ramon E.},
	doi = {10.1137/0714040},
	journal = {SIAM Journal on Numerical Analysis},
	number = {4},
	pages = {611--615},
	title = {A test for existence of solutions to nonlinear systems},
	volume = {14},
	year = {1977}}

@article{Rum10,
	author = {Rump, Siegfried M.},
	doi = {10.1017/S096249291000005X},
	journal = {Acta Numerica},
	pages = {287--449},
	title = {Verification methods: rigorous results using floating-point arithmetic},
	volume = {19},
	year = {2010}}

@article{FeGr23,
	author = {Fenucci, Marco and Gronchi, Giovanni~F.},
	doi = {10.1007/s10884-021-10083-5},
	journal = {Journal of Dynamics and Differential Equations},
	pages = {1511--1559},
	title = {Symmetric constellations of satellites moving around a central body of large mass},
	volume = {35},
	year = {2023}}

@article{FeGr18,
	author = {Fenucci, Marco and Gronchi, Giovanni F.},
	journal = {Nonlinearity},
	number = {11},
	pages = {4935--4954},
	title = {On the stability of periodic {$N$}-body motions with the symmetry of {P}latonic polyhedra},
	volume = {31},
	year = {2018}}

@article{FuGr14,
	author = {Fusco, Giorgio and Gronchi, Giovanni F.},
	journal = {Journal of Dynamics and Differential Equations},
	pages = {817--841},
	title = {Platonic polyhedra, periodic orbits and chaotic motions in the {$N$}-body problem with non-{N}ewtonian forces},
	volume = {26},
	year = {2014}}

@article{Moser70,
	author = {Moser, J{\"u}rgen},
	journal = {Communications on Pure and Applied Mathematics},
	pages = {609--636},
	title = {Regularization of {K}epler's problem and the averaging method on a manifold},
	volume = {23},
	year = {1970}}

@article{FeTe04,
	author = {Ferrario, Davide L. and Terracini, Susanna},
	doi = {10.1007/s00222-003-0322-7},
	journal = {Inventiones mathematicae},
	number = {2},
	pages = {305--362},
	title = {On the existence of collisionless equivariant minimizers for the classical {$n$}-body problem},
	volume = {155},
	year = {2004}}

@article{Mar02,
	author = {Marchal, Christian},
	doi = {10.1023/A:1020128408706},
	journal = {Celestial Mechanics and Dynamical Astronomy},
	pages = {325--353},
	title = {How the method of minimization of action avoids singularities},
	volume = {83},
	year = {2002}}

@article{Bott54,
	author = {Bott, Raoul},
	doi = {10.2307/1969631},
	journal = {Annals of Mathematics (2)},
	pages = {248--261},
	title = {Nondegenerate critical manifolds},
	volume = {60},
	year = {1954}}

\end{document}